\def\iddots{\mathinner{\mkern1mu\raise\p@
\vbox{\kern7\p@\hbox{.}}\mkern2mu
\raise4\p@\hbox{.}\mkern2mu\raise7\p@\hbox{.}\mkern1mu}}
\newcommand{\hadprod}{\circ} 
\newcommand{\cX}{\mathcal X}
\newcommand{\cF}{\mathcal F}
\newcommand{\cL}{\mathcal L}
\newcommand{\cM}{\mathcal M}
\newcommand{\RR}{\mathbb{R}}
\newcommand{\CC}{\mathbb{C}}
\newcommand{\QQ}{\mathbb{Q}}
\newcommand{\ZZ}{\mathbb{Z}}
\newcommand{\x}{\boldsymbol{x}}
\newcommand{\psd}{\succeq}
\definecolor{darkgreen}{rgb}{0,0.8,0}
\DeclareMathOperator{\supp}{supp}
\DeclareMathOperator{\Tr}{{\bf Tr}}
\DeclareMathOperator{\conv}{conv}
\DeclareMathOperator{\diag}{diag}
\let\Im\relax 
\DeclareMathOperator{\Im}{Im}
\let\Re\relax 
\DeclareMathOperator{\Re}{Re}
\DeclareMathOperator{\TPol}{TPol}
\DeclareMathOperator{\indiam}{indiam}
\renewcommand{\imath}{\boldsymbol{i}}
\newcommand{\Rot}{\textup{Rot}} 
\newcommand{\Freq}[1]{\ZZ_N} 
\newcommand{\PAR}{\text{PAR}} 
\newcommand{\Gon}{\mathcal{X}} 
\renewcommand{\S}{\mathbf{S}} 
\newcommand{\HH}{\mathbf{H}} 
\newcommand{\isom}{\cong} 
\newcommand{\mb}[1]{\bm{#1}}
\renewcommand{\tilde}[1]{\widetilde{#1}}
\theoremstyle{plain}
\newtheorem{prop}{Proposition}
\newtheorem{lem}{Lemma}
\newtheorem{thm}{Theorem}
\newtheorem{cor}{Corollary}
\newtheorem{question}{Question}
\theoremstyle{definition}
\newtheorem{defn}{Definition}
\newtheorem{example}{Example}
\theoremstyle{remark}
\newtheorem*{rem}{Remark}
\newif\if@borderstar
 \def\bordermatrix{\@ifnextchar*{%
 \@borderstartrue\@bordermatrix@i}{\@borderstarfalse\@bordermatrix@i*}%
}
 \def\@bordermatrix@i*{\@ifnextchar[{\@bordermatrix@ii}{\@bordermatrix@ii[()]}}
\def\@bordermatrix@ii[#1]#2{%
\begingroup
\m@th\@tempdima8.75\p@\setbox\z@\vbox{%
 \def\cr{\crcr\noalign{\kern 2\p@\global\let\cr\endline }}%
 \ialign {$##$\hfil\kern 2\p@\kern\@tempdima & \thinspace %
 \hfil $##$\hfil && \quad\hfil $##$\hfil\crcr\omit\strut %
 \hfil\crcr\noalign{\kern -\baselineskip}#2\crcr\omit %
 \strut\cr}}%
 \setbox\tw@\vbox{\unvcopy\z@\global\setbox\@ne\lastbox}%
 \setbox\tw@\hbox{\unhbox\@ne\unskip\global\setbox\@ne\lastbox}%
 \setbox\tw@\hbox{%
 $\kern\wd\@ne\kern -\@tempdima\left\@firstoftwo#1%
 \if@borderstar\kern2pt\else\kern -\wd\@ne\fi%
 \global\setbox\@ne\vbox{\box\@ne\if@borderstar\else\kern 2\p@\fi}%
 \vcenter{\if@borderstar\else\kern -\ht\@ne\fi%
 \unvbox\z@\kern -\if@borderstar2\fi\baselineskip}%
 \if@borderstar\kern -2\@tempdima\kern2\p@\else\,\fi\right\@secondoftwo#1 $%
}\null \;\vbox{\kern\ht\@ne\box\tw@}%
\endgroup
}
\title{Equivariant semidefinite lifts of regular polygons}
\author{Hamza Fawzi \and James Saunderson \and Pablo A. Parrilo\thanks{The authors are with
    the Laboratory for Information and Decision Systems, Department of
    Electrical Engineering and Computer Science, Massachusetts
    Institute of Technology, Cambridge, MA 02139. Email:
    \texttt{\{hfawzi,jamess,parrilo\}@mit.edu}.}}
\renewcommand\footnotemark{}
\date{September 9, 2014}
\begin{document}

\maketitle

\begin{abstract}
Given a polytope $P\subset \RR^n$, we say that $P$ has a positive semidefinite lift (psd lift) of size $d$ if one can express $P$ as the linear projection of an affine slice of the positive semidefinite cone $\S^d_+$. 
If a polytope $P$ has symmetry, we can consider \emph{equivariant psd lifts},
i.e.\ those psd lifts that respect the symmetry of $P$. One of the 
simplest families of polytopes with interesting symmetries are regular
polygons in the plane, which have played an important role in the study of \emph{linear programming lifts} (or 
\emph{extended formulations}). In this paper
we study equivariant psd lifts of regular polygons.

We first show that the standard Lasserre/sum-of-squares hierarchy for the regular $N$-gon requires exactly $\lceil N/4 \rceil$ iterations and thus yields an equivariant psd lift of size linear in $N$.
In contrast we show that one can construct an equivariant psd lift of the regular $2^n$-gon of size $2n-1$, which is exponentially smaller than the psd lift of the sum-of-squares hierarchy. Our construction relies on finding a \emph{sparse} sum-of-squares certificate for the facet-defining inequalities of the regular $2^n$-gon, i.e., one that only uses a small (logarithmic) number of monomials. Since any equivariant LP lift of the regular $2^n$-gon must have size $2^n$, this gives the first example of a polytope with an exponential gap between sizes of equivariant LP lifts and equivariant psd lifts.
Finally we prove that our construction is essentially optimal by showing that any equivariant psd lift of the regular $N$-gon must have size at least logarithmic in $N$.
\end{abstract}


\section{Introduction}

\subsection{Preliminaries}


Semidefinite programming is the problem of minimizing (or maximizing) a linear function subject to linear matrix inequalities. The feasible set of a semidefinite program is known as a \emph{spectrahedron} and corresponds to an affine slice of the cone of positive semidefinite matrices. An important question that has attracted a lot of attention in optimization is to give representations of convex sets as feasible sets of semidefinite programs.

In this paper we are interested in \emph{lifted} semidefinite representations. We say that a convex set $C$ has a \emph{positive semidefinite lift} (psd lift) if it can be written as the linear projection of a spectrahedron. More formally, we have the following definition of psd lift:

\begin{defn}
\label{def:psdlift}
\cite{gouveia2011lifts}
Let $C$ be a convex set in $\RR^n$, and let $\S^d_+$ be the cone of real symmetric positive semidefinite matrices. We say that $C$ has a $\S^d_+$-lift, or a \emph{psd lift of size $d$}, if there exists a linear map $\pi : \S^d \rightarrow \RR^n$ and an affine subspace $L \subset \S^d$ such that:
\[ C = \pi(\S^d_+ \cap L). \]
\end{defn}

An interesting question that has gained a lot of interest recently is, for a given polytope $P$, to characterize the size of the \emph{smallest psd lift} of $P$. This quantity, known as the \emph{psd rank} of $P$, was introduced in \cite{gouveia2011lifts} and studied in e.g., \cite{fiorini2012linear,gouveia2013polytopes,briet2013existence,fawzi2014equivariant}. Such psd lifts are interesting in practice when the size of the psd lift is much smaller than the number of facets of $P$ (which is the size of the trivial representation of $P$). Indeed, if $P$ has a psd lift of size $d$, then one can formulate any linear optimization problem over $P$ as a semidefinite program of size $d$.

The definition of a psd lift given here was first formulated in \cite{gouveia2011lifts} and is the generalization of the notion of \emph{LP lift} (also called \emph{extended formulation}) to the case of semidefinite programming. The definition of an LP lift of size $d$ is similar to that of a psd lift except that the psd cone $\S^d_+$ is replaced by $\RR^d_+$ (see Appendix \ref{app:equivariantLPlifts} for the formal statement of the definition).
 The size of the smallest LP lift of a polytope $P$ is called the \emph{LP extension complexity}. An important question in the area of lifted representations of polytopes is to know whether there exist polytopes with large gaps between sizes of LP lifts and psd lifts. The following open question is taken from \cite{psdranksurvey}:
\begin{question}
\label{q:gap}
Find a family of polytopes that exhibits a large (e.g. exponential) gap between its
psd rank and LP extension complexity.
\end{question}
One of the results of this paper shows that regular polygons give an example of such a gap when we restrict to lifts that respect symmetry, in a sense that we now make precise.

\paragraph{Equivariant lifts} In many situations, the polytope $P$ of interest has certain symmetries. The symmetries of a polytope $P \subset \RR^n$ are the geometric transformations that leave $P$ invariant: more precisely if $G$ is a group linearly acting on $\RR^n$, we say that $P$ is invariant under the action of $G$ if $g\cdot x \in P$ for any $x \in P$ and $g \in G$. 
 In \cite{fawzi2014equivariant}, we studied so-called \emph{equivariant psd lifts} of polytopes, which are psd lifts that respect the symmetry of a polytope $P$. Intuitively, a psd lift $P=\pi(\S^d_+ \cap L)$ respects the symmetry of $P$ if any transformation $g \in G$ that leaves $P$ invariant can be \emph{lifted} to a transformation $\Phi(g)$ of $\S^d$ that leaves the cone $\S^d_+$ and the subspace $L$ invariant and such that the following natural \emph{equivariance} condition holds: $\pi(\Phi(g) Y) = g \pi(Y)$ for all $Y \in \S^d_+ \cap L$. Since the transformations that leave the psd cone $\S^d_+$ invariant are precisely the congruence transformations (i.e., transformations of the form $Y \mapsto RYR^T$ where $R \in GL_d(\RR)$, cf. \cite[Theorem 9.6.1]{tuncel2000potential}), the transformation $\Phi(g)$ is required to have the form $\Phi(g):Y \mapsto \rho(g) Y \rho(g)^T$ where $\rho:G\rightarrow GL_d(\RR)$. This leads to the following definition of \emph{equivariant psd lift} from \cite{fawzi2014equivariant}:
\begin{defn}
\label{def:equivariance}
Let $P \subseteq \RR^n$ be a polytope and assume $P$ is invariant under the action of some group $G$. Let $P = \pi(\S^d_+ \cap L)$ be a $\S^d_+$-lift of $P$, where $L \subset \S^d$ is an affine subspace of the space of real symmetric $d\times d$ matrices. The lift is called \emph{$G$-equivariant} if there exists a homomorphism $\rho : G \rightarrow GL_d(\RR)$ such that:
\begin{itemize}
\item[(i)] The subspace $L$ is invariant under congruence transformations by $\rho(g)$, for all $g \in G$, i.e.:
\begin{equation}
  \label{eq:Linvariance}
 \rho(g) Y \rho(g)^T \in L \quad \forall g \in G, \; \forall Y \in L.
\end{equation}
\item[(ii)] The following equivariance relation holds:
\begin{equation}
 \label{eq:linequivariance}
 \pi\left(\rho(g) Y \rho(g)^T\right) = g\pi(Y) \quad \forall g \in G, \; \forall Y \in \S^d_+ \cap L.
\end{equation}
\end{itemize}
\end{defn}

In \cite{fawzi2014equivariant} we studied equivariant psd lifts of a general class of symmetric polytopes known as \emph{regular orbitopes}. An \emph{orbitope} \cite{sanyal2011orbitopes} is a polytope of the form $P = \conv(G\cdot x_0)$ where $G\cdot x_0$ is the orbit of $x_0 \in \RR^n$ under the action of a finite group $G$. Furthermore, we say that $P$ is a \emph{regular orbitope} if for any distinct elements $g\neq g'$ of $G$, we have $g\cdot x_0 \neq g' \cdot x_0$  (i.e., the points $\{g\cdot x_0, g \in G\}$ are in one-to-one correspondence with the elements of the group $G$). In \cite{fawzi2014equivariant} we established a connection between equivariant psd lifts of such polytopes and sum-of-squares certificates for its facet-defining inequalities. This connection allowed us to prove exponential lower bounds on sizes of equivariant psd lifts for two families of polytopes, namely the cut polytope and the parity polytope.

Note that when working with LP lifts (i.e., lifts with the cone $\RR^d_+$) one can also give a natural definition of \emph{equivariant LP lift} (also known as symmetric LP lift in the literature). The definition of an equivariant LP lift is similar to that of an equivariant psd lift, except that the action by congruence transformations $Y\mapsto \rho(g)Y \rho(g)^T$ is replaced by a permutation action $y \mapsto \Phi(g) y$ where $\Phi:G\rightarrow \mathfrak{S}_d$ is a homomorphism from $G$ to the permutation group on $d$ elements\footnote{The automorphism group of $\S^d_+$ consists of congruence transformations, whereas the automorphism group of $\RR^d_+$ is the permutation group on $d$ elements $\mathfrak{S}_d$.}  (see Appendix \ref{app:equivariantLPlifts} for the formal statement).

Equivariant LP lifts of various polytopes have been studied before in \cite{yannakakis1991expressing,kaibel2012symmetry,pashkovich2009tight,gouveia2011lifts} and it was shown that the requirement of equivariance can affect the size of the lift: several examples have been provided of polytopes with an exponential gap between sizes of LP lifts and equivariant LP lifts. One of the simplest such examples are regular $N$-gons in the plane which are known to have an LP lift of size $\log N$ \cite{ben2001polyhedral}, and yet any equivariant LP lift must have size at least $N$ when $N$ is a power of a prime \cite{gouveia2011lifts} (see Appendix \ref{app:equivariantLPlifts} for more details).

Further results about lifts of non-regular polygons were obtained in \cite{fiorini2012extended} where it was shown that generic $N$-gons have LP extension complexity at least $\sqrt{2N}$ (where generic means that the coordinates of the vertices are algebraically independent over $\QQ$). For psd lifts of $N$-gons much less is known. The only asymptotic lower bound on the \emph{psd rank} of $N$-gons is $\Omega\left(\sqrt{\frac{\log N}{\log \log N}}\right)$ which come from quantifier elimination theory \cite{gouveia2011lifts,gouveia2013worst}. It is also known that generic $N$-gons have psd rank at least $(2N)^{1/4}$ \cite{gouveia2013worst}.

\subsection{Summary of contributions} 
In this paper we propose to study equivariant psd lifts of regular polygons. Our contribution is threefold:
\begin{enumerate}
\item To obtain an equivariant psd lift of the regular polygon, one way is to use the Lasserre/sum-of-squares hierarchy \cite{lasserre2009convex,gouveia2010theta}. Our first contribution is to show that the sum-of-squares hierarchy for the regular $N$-gon requires exactly $\lceil N/4 \rceil$ iterations.
 The lower bound of $\lceil N/4 \rceil$ seems to be known in the community, though not written explicitly anywhere. Our main contribution here is to show that the $\lceil N/4 \rceil$'th iteration is exact (the previously known upper bound was $\lceil N/2 \rceil-1$). We prove this new upper bound by exploiting the fact that the regular $N$-gon is a \emph{$\lceil N/2 \rceil$-level} polytope and by showing that in some cases ---and in the particular case of regular polygons--- $k$-level polytopes only require $\lceil k/2 \rceil$ levels of the sum-of-squares hierarchy, instead of the previously known bound of $k-1$ \cite[Theorem 11]{gouveia2012convex}. The results developed here are of independent interest and can be applied to other $k$-level polytopes.
\item The second contribution of the paper is to give an explicit construction of an \emph{equivariant psd lift} of the regular $2^n$-gon of size $2n-1$.
 The main feature of our construction
 is that it is \emph{equivariant} (with respect to the full dihedral group), unlike the LP lift of Ben-Tal and Nemirovski \cite{ben2001polyhedral} which is not equivariant. It was actually shown in \cite[Proposition 3]{gouveia2011lifts} that any equivariant LP lift of the regular $N$-gon must have size at least $N$ when $N$ is a prime or a power of a prime (cf. Appendix \ref{app:equivariantLPlifts} for more details). 
Our construction thus gives an exponential gap between sizes of equivariant psd and linear programming lifts and thus gives an answer to a restricted version of Question \ref{q:gap}, where the restriction is to the case of equivariant lifts. 
Also note that the size of our construction is exponentially smaller than the lift obtained from the Lasserre/sum-of-squares hierarchy, which has size $1+2^{n-1}$.
Finally, another property of our lift is that it can be described using rational numbers only, whereas the LP lift \cite{ben2001polyhedral} involves irrational numbers.

\item Lastly, we prove that our equivariant lift is optimal by proving a lower bound on the sizes of equivariant psd lifts of the regular $N$-gon. In fact we show that any equivariant Hermitian psd lift of the regular $N$-gon must have size at least $\ln(N/2)$. The main ingredient in this part is a new result establishing a lower bound on the \emph{sparsity} of polynomials that arise in any sum-of-squares certificate of the facet-defining inequalities of the regular $N$-gon.
\end{enumerate}


Table \ref{tbl:polygons} summarizes the known bounds concerning LP/PSD lifts of the regular $2^n$-gon in both the equivariant and non-equivariant cases.

\begin{table}[ht]
  \centering
   \begin{tabular}{lp{6.8cm}p{7.3cm}}
   \toprule
      & Equivariant & Non-equivariant\\ \midrule
   LP & Lower bound: $2^n$ \cite{gouveia2011lifts}\newline Upper bound: $2^n$ (trivial) &  Lower bound: $n$ 
\cite{goemans2009smallest}\newline Upper bound: $2n+1$\cite{ben2001polyhedral} \\ \hline
SDP & {\bf Lower bound: $(\ln 2) (n-1)$ (Theorem \ref{thm:lbhermitian})\newline Upper bound: $2n-1$ 
(Section \ref{sec:construction})} & Lower bound: $\Omega\left(\sqrt{\frac{n}{\log n}}\right)$ \cite{gouveia2011lifts,gouveia2013worst} \newline Upper bound: $2n-1$ 
(Section \ref{sec:construction})  \\
\bottomrule
\end{tabular}
  \caption{\label{tbl:polygons} Bounds on the size of the smallest LP/PSD lifts for the regular $2^n$-gon in both the equivariant and non-equivariant cases. The main contributions of this paper are highlighted in bold.}
\end{table}


\subsection{Notation and statement of results}

We now describe the results of the paper more formally and introduce some notation and terminology that will be useful later.
The regular $N$-gon we consider in this paper has vertices
\[ \Gon_N = \{(\cos \theta_i,\sin \theta_i), i=1,\dots,N\} \;\; \text{ where } \;\; \theta_i = \frac{(2i-1) \pi}{N}. \]
Figure \ref{fig:Ngon} shows a picture for $N=7$.

\begin{figure}[ht]
  \centering
  \includegraphics[width=6cm]{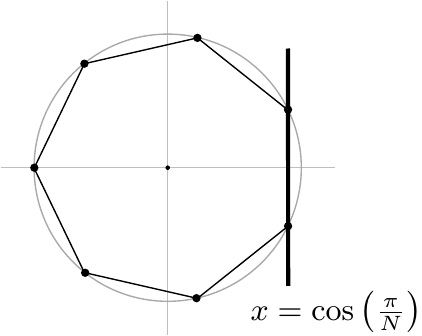}
  \caption{The regular $7$-gon.
}
  \label{fig:Ngon}
\end{figure}
The ``first'' (rightmost) facet of the regular $N$-gon is defined by the linear inequality $x \leq \cos(\pi/N)$. A main concern in this paper is to study certificates of nonnegativity of this linear function on $\cX_N$ using sum-of-squares, i.e., to certify that 
\begin{equation}
 \label{eq:facetineq}
 \cos(\pi/N) - x \geq 0 \quad \forall (x,y) \in \Gon_N.
\end{equation}
As we show later, such sum-of-squares certificates are the key to obtain equivariant psd lifts of the regular $N$-gon (cf. Section \ref{sec:soslift} for details).

Let $I \subseteq \RR[x,y]$ be the vanishing ideal of the vertices of the $N$-gon, i.e., $I$ is the set of polynomials that vanish on $\cX_N$. Note that $I$ is generated by the polynomials $\Re[(x+iy)^N+1]$ and $\Im[(x+iy)^N+1]$ where $\Re$ and $\Im$ indicate real and imaginary parts.
Let $\cF(N,\RR)=\RR[x,y]/I$ be the space of real polynomials on the vertices of the $N$-gon. The space $\cF(N,\RR)$ decomposes into a direct sum according to degree:
\begin{equation}
\label{eq:decompFN}
\cF(N,\RR) = \TPol_0(N) \oplus \dots \oplus \TPol_{\lfloor N/2 \rfloor}(N),
\end{equation}
where $\TPol_k(N)$ is the space of homogeneous polynomials of degree $k$ on the $N$-gon. For $0 < k < N/2$, the space $\TPol_k(N)$ is two-dimensional and is spanned by the functions $c_k$ and $s_k$:
\begin{equation}
 \label{eq:cksk}
 c_k(x,y) = \Re[(x+iy)^k], \quad s_k(x,y) = \Im[(x+iy)^k].
\end{equation}
Note that we can rewrite functions $c_k$ and $s_k$ using cosines and sines as follows (we can identify functions on $\cX_N$ as functions on the set of angles $\{\theta_1,\dots,\theta_N\}$):
\[ c_k(\theta) = \cos(k\theta), \quad s_k(\theta) = \sin(k\theta) \qquad \forall \theta \in \{\theta_1,\dots,\theta_N\}. \]
Observe that $s_0 = 0$ and, when $N$ is even, $c_{N/2} = 0$ (since $\cos((N/2) \theta_i) = \cos( (2i-1)\pi/2 ) = 0$ for all $i = 1,\dots,N$). Thus we have:
\begin{equation}
\label{eq:dimTPolk}
 \dim \TPol_k(N) = \begin{cases} 1 & \text{ if } k = 0 \text{ or } k = N/2\\ 2 & \text{ else. } \end{cases}
\end{equation}
We verify that $\sum_{k=0}^{\lfloor N/2 \rfloor} \dim \TPol_k(N) = \dim\cF(N,\RR) = N$.
Observe that a decomposition of any function $f \in \cF(N,\RR)$ in terms of the functions $c_k$ and $s_k$ essentially corresponds to a real \emph{discrete Fourier transform}. Thus we will often refer to functions in $\TPol_k(N)$ as functions with ``frequency'' $k$.

The Lasserre/sum-of-squares hierarchy for the regular $N$-gon seeks to certify the facet inequality \eqref{eq:facetineq} using sum-of-squares of polynomials of degree $\leq k$. The $k$'th level of the sum-of-squares hierarchy is called \emph{exact} if there exist polynomials $h_i$ with $\deg h_i \leq k$ (i.e., $h_i \in \TPol_0(N) \oplus \dots \oplus \TPol_k(N)$) such that:
\begin{equation}
\label{eq:cert-ksos}
 \cos(\pi/N) - x = \sum_{i} h_i(x,y)^2 \quad \forall (x,y) \in \Gon_N.
\end{equation}
The smallest $k$ for which a certificate of the form \eqref{eq:cert-ksos} exists (with $\deg h_i \leq k$) is called the \emph{theta-rank} of the $N$-gon, in reference to the terminology on theta-bodies \cite{gouveia2010theta}. In Section \ref{sec:soshierarchy} we prove the following theorem:
\begin{thm}
\label{thm:thetarank}
The theta-rank of the regular $N$-gon is exactly $\lceil N/4 \rceil$.
\end{thm}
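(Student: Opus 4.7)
My plan is to prove the matching upper and lower bounds on the theta-rank separately. For the lower bound $\geq \lceil N/4\rceil$---described as folklore---I would construct, for each $k < \lceil N/4\rceil$, a $D_N$-invariant pseudo-expectation $\tilde E : \RR[x,y]_{\leq 2k} \to \RR$ that is positive semidefinite on squares of degree-$\leq k$ polynomials, vanishes on the ideal of $\cX_N$, is normalized $\tilde E[1]=1$, yet satisfies $\tilde E[\cos(\pi/N) - x] < 0$. Dihedral invariance reduces the construction to choosing one ``radial'' pseudo-moment per Fourier frequency up to $2k$, and when $2k < \lceil N/2\rceil$ enough higher-frequency moments remain free to push $\tilde E[x]$ past $\cos(\pi/N)$ while keeping the degree-$2k$ moment matrix positive semidefinite.

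For the upper bound, the main work is to sharpen the classical theta-rank bound $k-1$ of \cite[Theorem 11]{gouveia2012convex} for $k$-level polytopes. I would prove a general lemma (of independent interest) asserting that the theta-rank is at most $\lceil k/2\rceil$ whenever the facet function $\ell$ admits a ``signed square root'' $v\mapsto\epsilon_v\sqrt{\ell(v)}$ (with $\epsilon_v \in \{\pm 1\}$) that is representable on the vertex set by a polynomial of degree $\leq \lceil k/2\rceil$. Applied to the $N$-gon, which is $\lceil N/2\rceil$-level, this reduces the task to constructing polynomials $h_1,\dots,h_r \in \TPol_0(N)\oplus\dots\oplus\TPol_{\lceil N/4\rceil}(N)$ with $\sum_i h_i(x,y)^2 = \cos(\pi/N) - x$ on $\cX_N$. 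The key algebraic input is the unit-circle identity
\[(\cos(\pi/N) - x)^2 = \bigl(1 - \cos(\pi/N)\, x + \sin(\pi/N)\, y\bigr)\bigl(1 - \cos(\pi/N)\, x - \sin(\pi/N)\, y\bigr),\]
which writes $(\cos(\pi/N) - x)^2$ as a product of two nonnegative linear forms on $\cX_N$, combined with the trigonometric factorization $v_j := \cos(\pi/N) - \cos((2j-1)\pi/N) = 2\sin(j\pi/N)\sin((j-1)\pi/N)$ of the vertex values.

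To build the $h_i$ I plan to use a Fourier-analytic interpolation: choose signs $\epsilon_j \in \{\pm 1\}$ and extract the low-frequency Fourier components of the signed-square-root function $\theta_j \mapsto \epsilon_j \sqrt{v_j}$ as the summands. For a suitable sign pattern, the Fourier coefficients of this function beyond frequency $\lceil N/4\rceil$ will vanish, producing a representation in $\TPol_0(N)\oplus\dots\oplus\TPol_{\lceil N/4\rceil}(N)$ whose square (modulo the ideal of $\cX_N$) recovers $\cos(\pi/N) - x$. The main technical obstacle is identifying a sign pattern that works uniformly in $N$. My plan is to guess the pattern from the half-angle sine structure underlying $\sqrt{v_j} = \sqrt{2\sin(j\pi/N)\sin((j-1)\pi/N)}$---the signs should reflect the two reflection symmetries of the $N$-gon---and then verify the Fourier support bound by a Chebyshev-type identity established by induction on $N$ or by a generating-function argument.
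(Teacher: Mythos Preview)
Your lower-bound strategy (a $D_N$-invariant pseudo-expectation) is a legitimate dual approach, though you have not actually carried it out; the paper instead gives a two-line primal argument: if $\ell = \mathrm{SOS} + g$ with $g$ in the ideal, then the trigonometric polynomial $g(\cos\theta,\sin\theta)$ has $N$ roots on the circle, hence degree $\geq N/2$, hence $\deg \mathrm{SOS} \geq N/2$. This is considerably simpler than building and analyzing a moment matrix.

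The upper bound is where your proposal has a genuine gap. Your plan hinges on the existence of a sign pattern $(\epsilon_j)$ such that the function $\theta_j \mapsto \epsilon_j\sqrt{v_j}$ has Fourier support in $\{0,1,\dots,\lceil N/4\rceil\}$, i.e.\ such that $\ell$ is a \emph{single} square of a low-degree polynomial on $\cX_N$. You acknowledge this is the ``main technical obstacle'' and propose to guess the pattern and verify it by an identity you have not yet found. That is not a proof; and in fact the paper's construction shows that the natural univariate interpolant $p$ (see below) is typically a sum of \emph{two} squares rather than a perfect square, so a single signed square root in $x$ alone will not exist, and there is no evident reason a clever sign choice using $y$ should work uniformly in $N$.

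The paper's route avoids the sign-finding problem entirely by reducing to a one-dimensional question it calls \emph{nonnegative polynomial interpolation}. Given the $k=\lceil N/2\rceil$ distinct values $a_0>a_1>\dots>a_{k-1}$ that $\ell$ takes on vertices, one asks for a \emph{globally nonnegative} univariate polynomial $p$ of degree $2\lceil N/4\rceil$ with $p(a_i)=a_0-a_i$. The key lemma (Proposition~\ref{prop:interp}/\ref{prop:interp2}) is that such a $p$ exists iff the vanishing polynomial $q(x)=\prod_i(x-a_i)$ lies above its tangent line at $x=a_0$. For the $N$-gon the $a_i$ are cosines, so $q$ is (up to scaling and small modifications when $4\nmid N$) a Chebyshev polynomial, and the tangent condition is verified directly from convexity/extremal properties of $T_m$ (Lemma~\ref{lem:chebyshev-tangent}). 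Once $p\geq 0$ is in hand, the fact that nonnegative univariate polynomials are sums of squares gives $\ell = \sum_i h_i(x)^2$ with $\deg h_i \leq \lceil N/4\rceil$ --- no sign pattern needed. This is the missing idea in your proposal.
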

In terms of psd lifts, Theorem \ref{thm:thetarank} means that the psd lift of the regular $N$-gon obtained from the sum-of-squares hierarchy has size \[ \dim (\TPol_0(N)\oplus \dots \oplus \TPol_{\lceil N/4 \rceil}(N)) = 1 + 2 \lceil N/4 \rceil. \] 

In this paper we show that one can actually obtain an equivariant psd lift that is substantially smaller than the one produced by the sum-of-squares hierarchy. The main idea here is to look for a sum-of-squares certificate of the form \eqref{eq:cert-ksos} where the functions $h_i$ are \emph{sparse} i.e., they are supported only on a few monomials (with potentially high degree). More precisely, we are looking for a certificate \eqref{eq:cert-ksos} where the functions $h_i$ live in a subspace $V$ of $\cF(N,\RR)$ of the form:
\begin{equation}
 \label{eq:subspaceV}
 V = \bigoplus_{k \in K} \TPol_k(N)
\end{equation}
where $K$ is a \emph{sparse} set (i.e., $K$ is not necessarily an interval $K=\{0,1,\dots,k\}$). If such a certificate exists, we say that $\ell$ has a sum-of-squares certificate with frequencies in $K$. Note that any subspace $V$ of the form \eqref{eq:subspaceV} is \emph{invariant} under the action of the dihedral group. Namely, if $f \in V$ and $\tau$ is any element of the dihedral group, then we have $\tau \cdot f \in V$ where $(\tau \cdot f)(x,y) := f(\tau^{-1} (x,y))$. Thus if a certificate of the form \eqref{eq:cert-ksos} exists where the functions $h_i$ are in the subspace $V$, then one automatically obtains a sum-of-squares certificate for all the other facets of the regular $N$-gon (since all the facets can be obtained from the facet $\ell = \cos(\pi/N) - x$ by rotation). One can then show that this leads to an equivariant psd lift of the regular $N$-gon of size $\dim V \leq 2|K|$.
%

In Section \ref{sec:construction}, we show that one can obtain sum-of-squares certificates for the facets of the $2^n$-gon where the set $K$ consists of powers of two. This constitutes the key result that leads to the equivariant lift of the $2^n$-gon of logarithmic size:
\begin{thm}
Consider the facet-defining linear function of the regular $2^n$-gon $\ell = \cos(\pi/2^n) - x$. Then $\ell$ admits a sum-of-squares certificate with frequencies in 
\[ K = \{0\} \cup \{2^i, i=0,\dots,n-2\}. \]
More precisely, there exist functions $h_k \in \TPol_0(2^n)\oplus \TPol_{2^k}(2^n)$ for $k=0,1,\ldots,n-2$ such that:
\begin{equation}
\label{eq:soscert2}
\ell = \sum_{k=0}^{n-2} h_k^2
\end{equation}
where \eqref{eq:soscert2} is an equality in $\cF(N,\RR)$ (i.e., an equality modulo the ideal of the $2^n$-gon).
\end{thm}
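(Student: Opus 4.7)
The plan is to seek each $h_k$ in the simple form $h_k(\theta) = a_k + b_k \cos(2^k\theta)$, which automatically lies in $\TPol_0(2^n) \oplus \TPol_{2^k}(2^n)$. Squaring and applying the double-angle identity $\cos^2\phi = \tfrac{1}{2}(1+\cos 2\phi)$ yields, in $\cF(2^n,\RR)$,
\[
h_k^2 = \bigl(a_k^2 + b_k^2/2\bigr) + 2 a_k b_k\, c_{2^k} + \tfrac{b_k^2}{2}\, c_{2^{k+1}},
\]
so each $h_k^2$ supports exactly the three frequencies $0,\,2^k,\,2^{k+1}$. Since adjacent summands in $\sum_{k=0}^{n-2} h_k^2$ share the frequency $2^{k+1}$, the natural strategy is to make them telescope.

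To match $\ell = \cos(\pi/2^n) - c_1$, I impose
\[
2a_0 b_0 = -1, \qquad 2 a_j b_j + b_{j-1}^2/2 = 0 \quad (1 \le j \le n-2),
\]
which put $-1$ at frequency $c_1$ and kill every intermediate contribution $c_2, c_4, \ldots, c_{2^{n-2}}$. The only unmatched non-constant term is $\tfrac{b_{n-2}^2}{2}\,c_{2^{n-1}}$, and here the hypothesis $N=2^n$ is essential: by \eqref{eq:dimTPolk} we have $c_{N/2}=c_{2^{n-1}}=0$ in $\cF(2^n,\RR)$, so this residual vanishes automatically and no further square is needed.

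It remains to choose real $b_0,\ldots,b_{n-2}$ so that $\sum_{k=0}^{n-2}(a_k^2+b_k^2/2)=\cos(\pi/2^n)$. Eliminating each $a_k$ via $a_0^2 = 1/(4b_0^2)$ and $a_k^2 = b_{k-1}^4/(16 b_k^2)$ reduces the task to showing that the infimum of
\[
F(b_0,\ldots,b_{n-2}) \;=\; \frac{1}{4 b_0^2} + \frac{b_0^2}{2} + \sum_{k=1}^{n-2}\left[\frac{b_{k-1}^4}{16 b_k^2} + \frac{b_k^2}{2}\right]
\]
equals $\cos(\pi/2^n)$. I would compute this by backward AM-GM elimination (using $\min_{w>0}[A/w+Bw] = 2\sqrt{AB}$): after eliminating $b_{n-2},\ldots,b_j$, the remaining optimal cost equals $r_j\,b_{j-1}^2$ with $r_{n-1}=0$ and $r_{j-1} = \tfrac12\sqrt{1/2 + r_j}$. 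By the half-angle identity $\cos(\pi/2^{m+1}) = \sqrt{(1+\cos(\pi/2^m))/2}$, this recursion unwinds to $r_j = \tfrac12\cos(\pi/2^{n-j})$, so $1/2+r_1 = \cos^2(\pi/2^n)$ and a final minimization in $b_0$ gives $\inf F = \cos(\pi/2^n)$, attained at explicit real values that supply the desired identity.

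The main obstacle I anticipate is spotting and verifying the half-angle pattern in the value-function recursion; once one conjectures $r_j = \tfrac12\cos(\pi/2^{n-j})$, the induction is immediate, but without that insight the cascade of nested radicals looks opaque. A minor bookkeeping point is sign selection for the $a_k$'s, which are determined only up to sign by the products $a_k b_k$; any consistent choice works since it affects each $h_k$ by an overall sign and hence leaves $h_k^2$ invariant.
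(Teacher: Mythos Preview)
Your proposal is correct and arrives at the same certificate as the paper, but by a genuinely different route. The paper (Proposition~\ref{prop:facet1cert}) writes down the explicit squares
\[
h_k \;=\; \sqrt{\tfrac{\sin(\pi/2^n)}{2^k\sin(2^{k+1}\pi/2^n)}}\bigl(\cos(2^k\pi/2^n)\,c_0 - c_{2^k}\bigr)
\]
and verifies the resulting identity by induction on $n$, the inductive step being the one-line ``doubling'' identity
\[
\frac{\cos(\pi/N)-\cos\theta}{\sin(\pi/N)} \;=\; \frac{(\cos(\pi/N)-\cos\theta)^2}{\sin(2\pi/N)} + \frac12\cdot\frac{\cos(2\pi/N)-\cos 2\theta}{\sin(2\pi/N)}.
\]
You instead treat the $a_k,b_k$ as unknowns, impose the telescoping constraints on the non-constant frequencies, eliminate the $a_k$'s, and then show by backward AM--GM that the minimum of the resulting constant term $F(b)$ equals exactly $\cos(\pi/2^n)$, with the half-angle recursion $r_{j-1}=\tfrac12\sqrt{1/2+r_j}$ unwinding to $r_j=\tfrac12\cos(\pi/2^{n-j})$. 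Both arguments hinge on the same half-angle/double-angle structure and on $c_{2^{n-1}}=0$ in $\cF(2^n,\RR)$; the paper's induction is shorter and hands you the coefficients immediately, while your variational framing explains \emph{why} the ansatz $h_k\in\TPol_0\oplus\TPol_{2^k}$ is exactly sufficient (the minimum of $F$ lands precisely on the target, not merely below it) and would generalize more readily if one wanted to search over other frequency supports.
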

An immediate corollary of this sum-of-squares certificate is the following psd lift of the regular $2^n$-gon over $(\S^3_+)^{n-1}$:
\begin{thm}
\label{thm:equivariantliftintro}
The regular $2^{n}$-gon is the set of points $(x_0,y_0)\in \RR^2$ such that
there exist real numbers $x_1,y_1,\dots,x_{n-2},y_{n-2},y_{n-1}$ satisfying:
\begin{align*}
\begin{bmatrix} 1 & x_{k-1} & y_{k-1}\\x_{k-1} & \frac{1+x_k}{2} & \frac{y_k}{2}\\y_{k-1} &\frac{y_k}{2} & \frac{1-x_{k}}{2}\end{bmatrix} \in \S_+^3
\quad
\text{for $k=1,2,\ldots,n-2$}\quad
\text{and}\quad
\begin{bmatrix} 1 & x_{n-2} & y_{n-2}\\x_{n-2} & \frac{1}{2} & \frac{y_{n-1}}{2}\\y_{n-2} & \frac{y_{n-1}}{2} & \frac{1}{2}\end{bmatrix} \in \S_+^3.
 \end{align*}
 \end{thm}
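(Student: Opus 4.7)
The plan is to prove both inclusions between $\conv(\Gon_{2^n})$ and the projection $\tilde P$ of the feasible set onto the $(x_0, y_0)$ coordinates. For the easy direction $\conv(\Gon_{2^n}) \subseteq \tilde P$, I would exhibit an explicit lift of each vertex: for $(\cos\theta, \sin\theta) \in \Gon_{2^n}$, set $x_k = \cos(2^k \theta)$ and $y_k = \sin(2^k \theta)$ for $k = 1, \dots, n-1$. The identity $\cos(2^{n-1}\theta_i) = \cos((2i-1)\pi/2) = 0$ makes $x_{n-1} = 0$ consistent with the $1/2$ diagonal entries of the last matrix, and the double-angle identities $\cos^2\alpha = (1+\cos 2\alpha)/2$, $\sin^2\alpha = (1-\cos 2\alpha)/2$, $2\sin\alpha\cos\alpha = \sin 2\alpha$ turn each of the specified $3 \times 3$ matrices into the rank-one outer product $w_k w_k^\top$ with $w_k = (1, \cos(2^{k-1}\theta), \sin(2^{k-1}\theta))^\top$, which is manifestly psd. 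Convexity extends the inclusion to the full hull.

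For the reverse inclusion $\tilde P \subseteq \conv(\Gon_{2^n})$, the key is to feed the sum-of-squares certificate of the previous theorem into the lift. Writing $h_k = a_k + b_k c_{2^k} + d_k s_{2^k}$ and $H_k = (a_k, b_k, d_k)^\top (a_k, b_k, d_k) \in \S^3_+$, a direct calculation gives
\[
\Tr(H_k M_{k+1}) = a_k^2 + \tfrac{b_k^2 + d_k^2}{2} + 2 a_k b_k\, x_k + 2 a_k d_k\, y_k + \tfrac{b_k^2 - d_k^2}{2}\, x_{k+1} + b_k d_k\, y_{k+1}
\]
for $k = 0, \dots, n-3$, with the same expression but with $x_{n-1}$ replaced by $0$ when $k = n-2$. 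Summing over $k$ and matching coefficients against the Fourier expansion of $\cos(\pi/2^n) - c_1 = \sum_k h_k^2$ in $\cF(2^n, \RR)$ --- which forces $\sum_k [a_k^2 + (b_k^2 + d_k^2)/2] = \cos(\pi/2^n)$, $2 a_0 b_0 = -1$, $2 a_0 d_0 = 0$, the intermediate $(b_{k-1}^2 - d_{k-1}^2)/2 + 2 a_k b_k = 0$ and $b_{k-1} d_{k-1} + 2 a_k d_k = 0$ for $k = 1, \dots, n-2$, and $b_{n-2} d_{n-2} = 0$ --- yields the trace identity
\[
\sum_{k=0}^{n-2} \Tr(H_k M_{k+1}) = \cos(\pi/2^n) - x_0.
\]
Since each term on the left is nonnegative by psd-ness, this proves the facet inequality $x_0 \leq \cos(\pi/2^n)$ for every $(x_0, y_0) \in \tilde P$.

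The remaining $2^n - 1$ facet inequalities follow from equivariance of the lift under the dihedral group $D_{2^n}$: the rotation by $\alpha = 2\pi j/2^n$ acts on the lifted space by $(x_k, y_k) \mapsto R(2^k \alpha)(x_k, y_k)$, where $R(\phi)$ is the planar rotation by $\phi$, and this action induces the congruence $M_k \mapsto \diag(1, R(2^{k-1}\alpha)) M_k \diag(1, R(2^{k-1}\alpha))^\top$, preserving both psd-ness and the structural form; for the last matrix, $2^{n-1}\alpha \in \pi \ZZ$ ensures the $1/2$ diagonal entries survive the conjugation. Hence $\tilde P$ is $D_{2^n}$-invariant, and applying the preceding paragraph to each rotated facet supplies all facet inequalities of $\Gon_{2^n}$. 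The main obstacle lies in the coefficient bookkeeping of the second paragraph: the telescoping cancellation of intermediate Fourier modes depends crucially on each $h_k$ being supported only on frequencies $\{0, 2^k\}$ (so $h_k^2$ on $\{0, 2^k, 2^{k+1}\}$) and on the degeneracy $c_{2^{n-1}} = 0$ in $\cF(2^n, \RR)$, which allows the last matrix to encode $x_{n-1} = 0$ implicitly rather than via an auxiliary variable.
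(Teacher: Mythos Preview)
Your proof is correct. Both your argument and the paper's rely on the sum-of-squares certificate of Proposition~\ref{prop:facet1cert} as the essential input, but they extract the lift from it differently. The paper invokes the general moment-map result Theorem~\ref{thm:momentlift}: once one has an invariant SOS decomposition of the facet functional with pieces in $V_i = \TPol_0 \oplus \TPol_{2^i}$, that theorem immediately gives the lift $\{\cM_{V_i}(z)\succeq 0\}$, and the paper simply computes the moment maps of each $V_i$ to read off the $3\times 3$ matrices. Your route is a self-contained unpacking of Theorem~\ref{thm:momentlift} in this particular case: you prove $\conv(\Gon_{2^n})\subseteq\tilde P$ by exhibiting the rank-one lift $w_k w_k^\top$ of each vertex, and $\tilde P\subseteq\conv(\Gon_{2^n})$ by pairing the rank-one Gram matrices $H_k$ of the certificate with the constraint matrices to obtain the trace identity $\sum_k\Tr(H_k M_{k+1})=\cos(\pi/2^n)-x_0$, then using an explicit rotation action $(x_k,y_k)\mapsto R(2^k\alpha)(x_k,y_k)$ on the lifted variables to transport this single facet certificate to all the others. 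The advantage of your approach is that it is entirely elementary and avoids quoting the Lasserre/theta-body machinery; the advantage of the paper's is that, with Theorem~\ref{thm:momentlift} in hand, the derivation is a one-line moment-map computation that works uniformly for any polytope with an invariant SOS certificate.
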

One can also use the certificate \eqref{eq:soscert2} to obtain a psd lift of the regular $N$-gon with a ``single block'' (i.e., no Cartesian products) over $\S^{2n-1}_+$. We refer to Section \ref{sec:smalllift} for more details on this.


We have described a general way to obtain equivariant psd lifts of the regular $N$-gon, by looking for certificates of nonnegativity of $\ell$ using sum-of-squares in a certain invariant subspace $V$. 
To obtain lower bounds on the size of equivariant psd lifts, we need to understand the structure of all possible equivariant psd lifts of the $N$-gon.
Our structure theorem from \cite{fawzi2014equivariant} shows that any equivariant psd lift of the regular $N$-gon must be of the sum-of-squares form described above. More precisely the results of \cite{fawzi2014equivariant} imply:
\begin{thm}
\label{thm:structureR}
Consider the facet-defining linear function of the regular $N$-gon $\ell = \cos(\pi/N) - x$. 
Assume that the regular $N$-gon has a psd lift of size $d$ that is equivariant with respect to $\Rot_N$, the subgroup of the dihedral group consisting of rotations. Then there exists a $\Rot_N$-invariant subspace $V$ of $\cF(N,\RR)$ with $\dim V \leq 2d$ such that:
\[ \ell = \sum_{i} h_i^2 \quad \text{ where } \quad h_i \in V. \]
\end{thm}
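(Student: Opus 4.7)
The plan is to derive Theorem~\ref{thm:structureR} as a specialization of the general structure theorem for equivariant psd lifts of regular orbitopes from \cite{fawzi2014equivariant}. The hypothesis is easily verified: $\cX_N = \conv(\Rot_N \cdot x_0)$ for $x_0 = (\cos(\pi/N), \sin(\pi/N))$, and since the $N$ rotated images of $x_0$ are distinct, the $\Rot_N$-action on the orbit is free, so $\cX_N$ is a regular $\Rot_N$-orbitope in the sense of that paper.

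First, a Yannakakis-type duality converts an equivariant psd lift of size $d$ into an equivariant psd factorization of the slack: equivariant assignments $v \mapsto A(v) \in \S^d_+$ (on vertices) and $f \mapsto B(f) \in \S^d_+$ (on facets) satisfying $\Tr(A(v) B(f)) = \mathrm{slack}(v,f)$, intertwined by a homomorphism $\rho : \Rot_N \to GL_d(\RR)$. By averaging an inner product over $\Rot_N$ we may assume $\rho$ is orthogonal. Since the orbit is free, we parametrize vertices as $v_g := g \cdot x_0$, and then $A(v_g) = \rho(g) A_0 \rho(g)^T$ with $A_0 := A(x_0)$. Factoring $A_0 = M_0^T M_0$ and $B(\ell) = \sum_{i=1}^r b_i b_i^T$ (with $r \leq d$) gives
\[ \ell(v_g) = \Tr(A(v_g) B(\ell)) = \sum_{i=1}^r \|M_0 \rho(g)^T b_i\|^2 = \sum_{i,j} h_{ij}(v_g)^2, \]
where $h_{ij}(v_g) := e_j^T M_0 \rho(g)^T b_i$ is, as a function of $g$, a scalar matrix coefficient of $\rho$. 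This is an sos certificate for $\ell$ on the vertices of $\cX_N$.

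It remains to bound $\dim V$ for an invariant subspace $V \subset \cF(N,\RR)$ containing the $h_{ij}$. Take $V$ to be the span of all real matrix coefficients of $\rho$, which is $\Rot_N$-invariant under the translation action. Decomposing $\rho \otimes \CC = \bigoplus_k m_k \chi_k$ over the complex characters of $\Rot_N$ (with $\sum_k m_k = d$), the span of matrix coefficients of $\rho$ has complex dimension equal to $|\{k : m_k > 0\}| \leq d$, hence real dimension at most $2d$. This representation-theoretic count — together with the existence of equivariant factorizations, which uses freeness of the orbit and orthogonality of $\rho$ — is the substance of the structure theorem of \cite{fawzi2014equivariant}; in the paper proper one simply invokes that result rather than re-deriving the bound. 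The main obstacle is precisely this dimension count, i.e., identifying the correct invariant subspace and verifying that the $h_{ij}$ lie inside it; once that is granted, the rest of the argument is formal.
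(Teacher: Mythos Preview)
Your proposal is correct and matches the paper's approach at the top level: both derive Theorem~\ref{thm:structureR} by recognizing the regular $N$-gon as a regular $\Rot_N$-orbitope and invoking the structure theorem from \cite{fawzi2014equivariant}. The paper does not give a separate proof of the real statement; it simply cites that result and instead proves the Hermitian analogue (Theorem~\ref{thm:structure}) in detail.

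There is a mild technical difference between your sketch and the paper's explicit argument for the Hermitian case. You factor $A_0$ and $B(\ell)$ directly and read off the $h_{ij}$ as matrix coefficients of $\rho$, then bound $\dim V$ via the number of irreducible constituents. The paper instead diagonalizes $\rho$ over $\CC$ (using that $\ZZ_N$ is abelian), rewrites $\ell(r\cdot\theta_1) = t(r)^*(A'\hadprod B')\,t(r)$ using the Schur product, and factors the psd matrix $A'\hadprod B'$ to obtain functions supported on the explicit frequency set $K=\{k_1,\dots,k_d\}$. Your route is a bit more general (it does not use commutativity of $\Rot_N$), while the paper's route immediately identifies the relevant frequencies. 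One small remark: your dimension count is overly cautious. The span of real matrix coefficients of $\rho$ is a real subspace whose complexification has complex dimension $|\{k:m_k>0\}|\le d$, hence its \emph{real} dimension is already $\le d$, not merely $\le 2d$; the factor of two is harmless for the stated theorem but is not actually needed.
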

Since the subspaces $\TPol_k(N), k \in \{0,\dots,\lfloor N/2 \rfloor\}$ are the only irreducible subspaces of $\cF(N,\RR)$ under the action of $\Rot_N$, it is not hard to see that any $\Rot_N$-invariant subspace $V$ of $\cF(N,\RR)$ must have the form:
\[ V = \bigoplus_{k \in K} \TPol_k(N) \]
where $K \subseteq \{0,\dots,\lfloor N/2 \rfloor\}$.
Thus, using Theorem \ref{thm:structureR}, the problem of studying equivariant psd lifts of the regular $N$-gon reduces to the problem of studying sum-of-squares certificates of  $\ell = \cos(\pi/N) - x$ with functions $h_i$ having frequencies in a set $K$.  
Our main result is to give a lower bound on the size of such sets $K$:
\begin{thm}
\label{thm:lbsosvalid}
Let $\ell = \cos(\pi/N) - x$ and assume that we can write
\begin{equation}
 \label{eq:ellcert3}
 \ell = \sum_{i} h_i^2 \quad \text{ where } \quad h_i \in \bigoplus_{k \in K} \TPol_k(N)
\end{equation}
for some set $K \subseteq \{0,\dots,\lfloor N/2 \rfloor\}$. Then necessarily $|K| \geq \ln(N/2)/2$.
\end{thm}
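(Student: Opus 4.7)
The plan is to reformulate the sparse sum-of-squares identity as a semidefinite feasibility problem in the Fourier basis, then invoke SDP duality and construct a dual witness whose existence when $|K|$ is small gives a contradiction.

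\textbf{Fourier reformulation.} Each $h_i \in V := \bigoplus_{k \in K} \TPol_k(N)$ can be written on the vertices of the regular $N$-gon in the complex Fourier basis as $h_i = \sum_{k \in \tilde K} \hat h_i(k)\, \chi_k$, where $\chi_k(v_j) = e^{ik(2j-1)\pi/N}$, $\tilde K = (K \cup (-K)) \bmod N \subseteq \ZZ/N$, and reality imposes $\hat h_i(-k) = \overline{\hat h_i(k)}$. Expanding $\sum_i h_i^2$ and comparing Fourier coefficients, the identity \eqref{eq:ellcert3} becomes equivalent to the existence of a Hermitian PSD matrix $M = \sum_i \hat h_i \hat h_i^* \in \CC^{\tilde K \times \tilde K}$ satisfying, for every $m \in \ZZ/N$,
\[ \sum_{\substack{k,l \in \tilde K \\ k + l \equiv m \!\!\pmod N}} M_{k,l} \;=\; \hat\ell(m),\]
where $\hat\ell(0) = \cos(\pi/N)$, $\hat\ell(\pm 1) = -\tfrac 12$, and $\hat\ell(m) = 0$ otherwise. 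Theorem~\ref{thm:lbsosvalid} then reduces to showing that no such $M$ exists whenever $|K|$ is below the logarithmic threshold.

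\textbf{SDP duality.} Infeasibility of the above SDP is equivalent, via standard SDP duality, to the existence of a real signed measure $\mu$ on the $N$ vertices satisfying (i) $\sum_j \mu_j\, h(v_j)^2 \ge 0$ for every $h \in V$, and (ii) $\sum_j \mu_j\, \ell(v_j) < 0$. Condition (i) translates into a Toeplitz-type positive semidefiniteness condition on the Fourier coefficients of $\mu$ indexed by the sum set $\tilde K + \tilde K \bmod N$, while condition (ii) becomes a simple linear inequality in $\hat\mu(0)$ and $\hat\mu(\pm 1)$. The lower bound then follows if one can exhibit such a $\mu$ for every set $K$ of size below $\tfrac 12 \ln(N/2)$.

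\textbf{Witness construction (main obstacle).} The crux of the argument is constructing the dual witness $\mu$ explicitly. A natural ansatz is a Fej\'er-kernel-type nonnegative trigonometric polynomial on the vertices (which automatically guarantees (i) in a strong form) perturbed by a small delta-type mass near $v_1$ (where $\ell$ vanishes), chosen so as to push $\sum_j \mu_j \ell(v_j)$ negative. The logarithmic factor $\ln(N/2)$ should emerge from a combinatorial sum-set argument: since $\tilde K + \tilde K \bmod N$ has cardinality at most $|\tilde K|^2 = O(|K|^2)$, condition (i) restricts only $O(|K|^2)$ Fourier coefficients of $\mu$, leaving large freedom in the complementary frequencies. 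Quantifying this freedom tightly---so that the constant $1/2$ in the bound falls out---is the main technical obstacle; I anticipate it will require averaging over rotated copies of a base witness to exploit the dihedral symmetry of $\ell$, together with extremal estimates on nonnegative trigonometric polynomials with sparse spectral support in the spirit of the doubling construction used for the matching upper bound in Section~\ref{sec:construction}.
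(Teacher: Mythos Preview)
Your setup through the SDP duality step is correct and matches the paper's framework: the paper also constructs a linear functional $\cL$ on $\cF(N,\CC)$ with $\cL(\ell)<0$ and $\cL(|h|^2)\ge 0$ for all $h$ supported on $\tilde K$. The gap is entirely in your third step, both in the proposed construction of the witness and in the mechanism you expect to produce the logarithm.

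The Fej\'er-kernel-plus-perturbation ansatz does not fit the problem. A nonnegative $\mu$ gives $\sum_j\mu_j\ell(v_j)\ge 0$ since $\ell\ge 0$ on the vertices, so the perturbation must carry all of condition (ii); but a delta at $v_1$ contributes nothing (because $\ell(v_1)=0$), and a delta at any other vertex contributes positively. More importantly, your sum-set heuristic is off target: the number of constrained Fourier coefficients being $O(|K|^2)$ says nothing by itself about whether a separating $\mu$ exists, and there is no averaging-over-rotations argument that extracts a logarithm here. The paper's witness is not built from any spectral-sparsity count of $\tilde K+\tilde K$.

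What the paper actually does is combinatorial. It first proves a clustering criterion: if $\tilde K$ can be partitioned into clusters each of in-diameter at most $\gamma$ and pairwise separated by more than $\gamma$ on the cycle $\ZZ_N$ (for some $1\le\gamma<N/2$), then $\tilde K$ is not sos-valid. For such a clustering the witness is explicit and simple, essentially ``evaluation at the point $(1,0)$ outside the $N$-gon, truncated to frequencies within $\gamma$ of the origin''; the cluster separation kills cross-terms and the in-diameter bound makes each within-cluster term a genuine square. The logarithmic bound then comes from an entirely separate argument: a greedy algorithm that sorts the pairwise gaps $d_1\le d_2\le\cdots$ in $\tilde K$ and merges in order, together with the estimate that if no valid clustering appears at any stage then $\sum_i d_{i+1}/S_i \ge \int^{N/2} dx/x$, forcing $|\tilde K|\ge\ln(N/2)$. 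The factor $1/2$ in the real statement is just $|\tilde K|\le 2|K|$. None of this is visible from the sum-set or Fej\'er-kernel perspective you sketch, so as written your step 3 is not a plan but a hope; the missing idea is the clustering geometry of $\tilde K$ on the cycle.
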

Combining Theorems \ref{thm:structureR} and \ref{thm:lbsosvalid} we get that any $\Rot_N$-equivariant psd lift of the regular $N$-gon must have size at least $\Omega(\log N)$.
 Our proof of Theorem \ref{thm:lbsosvalid} proceeds in two steps. In the first step, we give necessary conditions in terms of the ``geometry'' of a set $K$ for a sum-of-squares certificate \eqref{eq:ellcert3} to exist: we show that if the elements in $K$ can be \emph{clustered} in a certain way then a sum-of-squares certificate of $\ell$ of the form \eqref{eq:ellcert3} is not possible. In the second step we propose an algorithm to cluster any given set $K$, and we prove that our algorithm finds a valid clustering whenever the set $K$ is small enough, i.e., whenever $|K| < \ln(N/2)/2$. 
\begin{rem}
When proving Theorems \ref{thm:structureR} and \ref{thm:lbsosvalid} in Section \ref{sec:lb} we actually consider the more general Hermitian sum-of-squares certificates (instead of real sum-of-squares certificates) which are more convenient to work with. We refer the reader to Section \ref{sec:lb} for more details on this.
\end{rem}

\paragraph{Organization} 
The paper is organized as follows: In Section \ref{sec:soshierarchy} we 
prove that the Lasserre/sum-of-squares hierarchy of the regular $N$-gon requires exactly $\lceil N/4 \rceil$ iterations. Then in Section \ref{sec:construction} we show a construction of an equivariant psd lift of the regular $2^n$-gon of size $2n-1$. Finally in Section \ref{sec:lb} we show that any equivariant Hermitian psd lift of the regular $N$-gon must have size $\ln(N/2)$.

In the previous discussion we did not describe how a sum-of-squares certificate of the facet-defining inequalities of the regular $N$-gon leads to a psd lift. In the next section we make this connection more precise and we give the example of the regular hexagon as a simple illustration.

\subsection{Equivariant psd lifts from sum-of-squares certificates}
\label{sec:soslift}
Consider the facet-defining linear function $\ell(x,y) = \cos(\pi/N) - x$ of the regular $N$-gon.
In this section we see how sum-of-squares certificates of $\ell$ can be used to get an equivariant psd lift of the regular $N$-gon. The idea is actually quite general and applies to general polytopes, cf. \cite{gouveia2011lifts,lasserre2009convex}. We consider here the case of the regular $N$-gon for concreteness.

An important concept to describe the psd lift from the sum-of-squares certificate is the notion of \emph{moment map}. Let $V$ be a subspace of $\cF(N,\RR)$ (typically $V = \oplus_{k \in K} \TPol_k(N)$ for some $K\subseteq \{0,\dots,\lfloor N/2 \rfloor\}$) and let $d=\dim V$. Let $b_1,\dots,b_d$ be a basis of $V$ (a natural basis is the one given by cosine and sine functions $c_k,s_k$ for $k\in K$). Consider the following linear map:
\[
T_V : 
\begin{array}[t]{rl}
\S^d & \rightarrow \cF(N,\RR)\\
Q & \mapsto \sum_{i,j=1}^d Q_{ij} b_i b_j.
\end{array}
\]
Note that the map $T_V$ satisfies the following important property: a function $f \in \cF(N,\RR)$ can be written as a sum-of-squares of functions in $V$, if, and only if we can write $f = T_V(Q)$ where $Q$ is a positive semidefinite matrix, i.e.,
\[ \exists h_i \in V, \; f = \sum_{i} h_i^2 \quad \Longleftrightarrow \quad \exists Q \in \S^d_+, \; f = T_V(Q). \]
Now, define the \emph{moment map} of $V$, denoted $\cM_V$ to be the adjoint of $T_V$:
\[ \cM_V = (T_V)^* : \cF(N,\RR)^* \rightarrow \S^d, \]
where $\cF(N,\RR)^*$ is the dual space of $\cF(N,\RR)$ (i.e., the space of linear functions on $\cF(N,\RR)$), and where we identified $(\S^d)^*$ with $\S^d$ via the canonical inner product.
The definition of $\cM_V$ may look a bit abstract at first, but in fact $\cM_V$ can be quite easily computed in the basis of cosine and sine functions using trigonometric product formulae, as we shown in Example \ref{ex:hexagon} later.

We are ready to state the main result which shows how to obtain an equivariant psd lift of the regular $N$-gon from a sum-of-squares certificate for $\ell$:


\begin{thm}
\label{thm:momentlift}
Consider the facet-defining linear function $\ell(x,y) = \cos(\pi/N) - x$ of the regular $N$-gon. Assume that $\ell \in \cF(N,\RR)$ has a sum-of-squares representation in $\cF(N,\RR)$ of the form:
\begin{equation}
\label{eq:blocksos}
\ell = \sum_{i=1}^q \sum_j h_{ij}^2
\end{equation}
where for each $i=1,\dots,q$ and each $j$, the functions $h_{ij}$ are in a subspace $V_i$ of $\cF(N,\RR)$ that is invariant under the action of the dihedral group. Then the regular $N$-gon admits the following equivariant psd lift over the Cartesian product $\S^{d_1}_+ \times \dots \times \S^{d_q}_+$ where $d_i  =\dim V_i$ for each $i=1,\dots,q$:
%
\begin{equation}
\label{eq:liftmoment}
\conv(\Gon_N) = \Biggl\{ (z(c_1),z(s_1)) \; : \; z \in \cF(N,\RR)^* \text{ where } z(c_0) = 1, \cM_{V_i}(z) \in \S^{d_i}_+ \;\; \forall i=1,\dots,q \Bigr\}.
\end{equation}
In \eqref{eq:liftmoment}, $c_0,c_1,s_1$ are the cosine and sine functions in $\cF(N,\RR)$ defined in \eqref{eq:cksk}.
\end{thm}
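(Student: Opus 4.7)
The plan is to establish (\ref{eq:liftmoment}) by a pair of inclusions and then check the equivariance required by Definition \ref{def:equivariance}. Denote the right-hand side of (\ref{eq:liftmoment}) by $P_{\text{lift}}$; it is convex because it is cut out by linear equalities and psd constraints on $z \in \cF(N,\RR)^*$. For the inclusion $\conv(\Gon_N) \subseteq P_{\text{lift}}$, it suffices to check each vertex $v \in \Gon_N$, and the natural witness is the evaluation functional $z_v(f) := f(v)$. Then $z_v(c_0) = 1$, $(z_v(c_1), z_v(s_1)) = v$, and in any basis $b_1,\dots,b_{d_i}$ of $V_i$,
$$\cM_{V_i}(z_v) = \bigl[ z_v(b_k b_l) \bigr]_{k,l} = \bigl[ b_k(v) b_l(v) \bigr]_{k,l} = b(v) b(v)^T \psd 0.$$

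For the reverse inclusion, pick any feasible $z \in P_{\text{lift}}$. The defining property of $T_{V_i}$ (namely, $f = \sum_j h_j^2$ with $h_j \in V_i$ iff $f \in T_{V_i}(\S^{d_i}_+)$) lets me rewrite (\ref{eq:blocksos}) as $\ell = \sum_{i=1}^q T_{V_i}(Q_i)$ with $Q_i \psd 0$, and the adjoint relation $z(T_{V_i}(Q_i)) = \langle \cM_{V_i}(z), Q_i \rangle$ yields
$$z(\ell) = \sum_{i=1}^q \langle \cM_{V_i}(z), Q_i \rangle \geq 0,$$
which unpacks to the first facet inequality $z(c_1) \leq \cos(\pi/N)$. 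To recover the remaining $N-1$ facet inequalities I would invoke the dihedral invariance of each $V_i$: for any $g$ in the dihedral group, applying $g$ to (\ref{eq:blocksos}) gives $g \cdot \ell = \sum_{i,j} (g \cdot h_{ij})^2$ with $g \cdot h_{ij} \in V_i$, and rerunning the previous argument on this shifted certificate gives $z(g \cdot \ell) \geq 0$. Since the orbit $\{g \cdot \ell\}$ enumerates every facet-defining function of $\Gon_N$, the point $(z(c_1), z(s_1))$ satisfies all $N$ facet inequalities and therefore lies in $\conv(\Gon_N)$.

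For equivariance, each invariant $V_i$ carries a matrix representation $\rho_i : G \to GL(V_i)$, and $\rho := \bigoplus_i \rho_i$ induces a block-diagonal congruence action on $\S^{d_1} \times \cdots \times \S^{d_q}$. Starting from the identity $g \cdot T_{V_i}(Q) = T_{V_i}(\rho_i(g) Q \rho_i(g)^T)$ (a direct computation from the basis transformation law on $V_i$) and dualizing gives the corresponding congruence-equivariance of $\cM_{V_i}$, after identifying $\rho_i$ with its inverse-transpose where needed so as to match the conventions of Definition \ref{def:equivariance}. This shows both that the affine slice $\{z : z(c_0) = 1\}$ together with the psd constraints $\cM_{V_i}(z) \psd 0$ is jointly $G$-invariant (yielding (\ref{eq:Linvariance})) and that the induced action on the image coordinates $(z(c_1), z(s_1)) \in \RR^2$ agrees with the standard dihedral representation on $\RR^2$ (yielding (\ref{eq:linequivariance})). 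The main technical step is the passage from a one-facet SOS certificate to certificates for every facet in the reverse-inclusion argument, and it is precisely at that step that $G$-invariance of the $V_i$ is essential: it is what promotes an otherwise generic psd lift into an equivariant one.
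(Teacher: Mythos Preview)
Your proposal is correct and is exactly the standard two-inclusion argument that the paper defers to (the paper omits the proof entirely, citing \cite{gouveia2010theta} for the lift and \cite[Appendix A]{fawzi2014equivariant} for equivariance). The only point worth tightening is the implicit step in the reverse inclusion: you should note that each $g\cdot\ell$ lies in $\linspan\{c_0,c_1,s_1\}$, so $z(g\cdot\ell)\geq 0$ is literally the corresponding facet inequality evaluated at $(z(c_1),z(s_1))$; once that is said, the argument is complete.
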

\begin{proof}
  We omit the proof of this theorem since it is simply a restatement of the results in \cite{gouveia2010theta}. The proof that the lift is equivariant is given in \cite[Appendix A]{fawzi2014equivariant}.
\end{proof}
\begin{rem}
 Note that in the statement of the theorem we consider sum-of-squares certificates of $\ell$ of the form:
\[ \ell = \sum_{i=1}^q \sum_{j} h_{ij}^2 \]
where the functions $h_{ij}$ are in an invariant subspace $V_i$ of $\cF(N,\RR)$. The advantage of considering such a decomposition (over a coarser decomposition $\ell = \sum_{i} h_i^2$ where $h_i \in V$) is that when the subspaces $V_i$ are low-dimensional, this allows to obtain a psd lift using Cartesian products of small psd cones, rather than a psd lift with a single large psd matrix of size $\dim V_1 + \dots + \dim V_q$. This will be useful later in the construction of the psd lift for the regular $2^n$-gon in Section \ref{sec:construction}.
\end{rem}
\begin{example}[Regular hexagon]
\label{ex:hexagon}
We now illustrate the theorem above with the regular hexagon. Consider the facet-defining linear function $\ell = \cos(\pi/6) - x = \sqrt{3}/2 - x$ of the regular hexagon.
One can verify that we have the following sum-of-squares representation of $\ell$:


\begin{equation}
\label{eq:soshexagon}
\ell = \frac{\sqrt{3}}{4} \left(-1 + \frac{2}{\sqrt{3}} c_1\right)^2 + \frac{\sqrt{3}}{36} \left(-2s_1 + s_3\right)^2
\end{equation}
where the functions $c_1,s_1,s_3$ are as defined in \eqref{eq:cksk}. Theorem \ref{thm:momentlift} allows us to translate this sum-of-squares certificate into an equivariant psd lift. There are actually two ways to obtain the lift, depending on how we ``break'' the sum-of-squares certificate:
\begin{enumerate}
\item First one can see \eqref{eq:soshexagon} as a certificate of the type \eqref{eq:blocksos} where $q = 2$ and where $V_1 = \TPol_0 \oplus \TPol_1$ and $V_2 = \TPol_1 \oplus \TPol_3$. Observe that $\dim V_1 = \dim V_2 = 3$. To form the psd lift of the hexagon it thus remains to compute the moment maps $\cM_{V_1}$ and $\cM_{V_2}$ of $V_1$ and $V_2$. We show how this is done for $V_1$. First we choose the natural basis of $V_1$ given by cosine and sine functions: $c_0,c_1,s_1$. We then form all pairwise products of the basis elements of $V_1$ with each other, and express all these products in the basis of $\cF(N,\RR)$ formed with cosine and sine functions. This step mainly involves using trigonometric product formulae, and the fact that $c_{N-k} = -c_k$ and $s_{N-k} = s_k$. 
The matrix below shows the result of computing all the pairwise products of $c_0,c_1,s_1$:
\[
\bordermatrix[{[]}]{%
 & c_0 & c_1 & s_1\cr
c_0 & c_0 & c_1 & s_1\cr
c_1 & c_1 & (c_0 + c_2)/2 & s_2 / 2\cr
s_1 & s_1 & s_2 / 2 & (c_0 - c_2)/2
 }.
\]
The moment map $\cM_{V_1}$ simply maps each $z \in \cF(6,\RR)^*$ to the $3\times 3$ matrix above where each entry is replaced by its image by $z$, i.e.,
\[
\cM_{V_1}: z \in \cF(6,\RR)^* \mapsto 
\begin{bmatrix}
z(c_0) & z(c_1) & z(s_1)\\
z(c_1) & (z(c_0)+z(c_2))/2 & z(s_2)/2\\
z(s_1) & z(s_2)/2 & (z(c_0) - z(c_2))/2
\end{bmatrix}.
\]
Using the same procedure, one can show that the moment map of $V_2$ is given by (the basis of $V_2$ that we chose is $c_1,s_1,s_3$):
\[
\cM_{V_2}: z \in \cF(6,\RR)^* \mapsto 
\begin{bmatrix}
(z(c_0)+z(c_2))/2 & z(s_2)/2 & z(s_2)\\
z(s_2)/2 & (z(c_0) - z(c_2))/2 & z(c_2)\\
z(s_2) & z(c_2) & z(c_0)
\end{bmatrix}.
\]
Using Theorem \ref{thm:momentlift} we can thus write the following equivariant psd lift of the regular hexagon over $\S^3_+ \times \S^3_+$ (to make the expressions lighter, we let $u_i = z(c_i)$ and $v_i = z(s_i)$):
\[
\begin{aligned}
\conv(\Gon_6) = \Biggl\{ 
(u_1,v_1) \in \RR^2 : \exists u_2,v_2 \in \RR & \qquad 
\begin{bmatrix}
1 & u_1 & v_1\\
u_1 & (1+u_2)/2 & v_2 / 2\\
v_1 & v_2/2 & (1-u_2)/2
\end{bmatrix} \succeq 0\\
& \text{ and } 
\begin{bmatrix}
(1+u_2)/2 & v_2 / 2 & v_2\\
v_2/2 & (1-u_2)/2 & u_2\\
v_2 & u_2 & 1
\end{bmatrix} \succeq 0
\Biggr\}.
\end{aligned} 
\]
\item Using the sum-of-squares certificate \eqref{eq:soshexagon} we can actually obtain another lift of the regular hexagon using $\S^4_+$ (instead of $\S^3_+ \times \S^3_+$). To do this, we view \eqref{eq:soshexagon} as a certificate where the functions come from the single subspace $V = \TPol_0 \oplus \TPol_1\oplus \TPol_3$ (i.e., we take $q=1$ in the notation of \eqref{eq:blocksos}).  Observe that $\dim V = 4$ and that the moment map of $V$ is given by:
\[
\cM_{V}:z\in\cF(6,\RR)^* \mapsto
\begin{bmatrix}
z(c_0) & z(c_1) & z(s_1) & z(s_3)\\
z(c_1) & (z(c_0)+z(c_2))/2 & z(s_2)/2 & z(s_2)\\
z(s_1) & z(s_2)/2 & (z(c_0) - z(c_2))/2 & z(c_2)\\
z(s_3) & z(s_2) & z(c_2) & z(c_0)
\end{bmatrix}.
\]
Using Theorem \ref{thm:momentlift} we thus get the following equivariant psd lift of the regular hexagon of size 4:
\[
\conv(\Gon_6) = \left\{ (u_1,v_1) \in \RR^2 : \exists u_2,v_2,v_3 \in \RR \quad \begin{bmatrix}
1 & u_1 & v_1 & v_3\\
u_1 & (1+u_2)/2 & v_2 / 2 & v_2\\
v_1 & v_2/2 & (1-u_2)/2 & u_2\\
v_3 & v_2 & u_2 & 1
\end{bmatrix} \succeq 0 \right\}.
\]
\end{enumerate}
\end{example}

\section{Sum-of-squares hierarchy and nonnegative polynomial interpolation}
\label{sec:soshierarchy}

In this section we study the Lasserre/sum-of-squares hierarchy for the regular $N$-gon. Recall that the $k$'th level of the hierarchy is \emph{exact} if one can certify all the facet inequalities using sum-of-squares polynomials of degree smaller or equal than $k$. Since all the facets of the regular $N$-gon are equivalent up to rotation, it suffices to consider one facet, e.g., 
\[ \ell(x,y) = \cos(\pi/N) - x. \]
 The smallest $k$ for which we can write:
\[ \ell = \sum_{i} h_i^2 \]
where $\deg h_i \leq k$ (i.e., $h_i \in \TPol_0(N) \oplus \dots \oplus \TPol_k(N)$) is called the \emph{theta-rank} of the regular $N$-gon.  In this section we prove the following theorem:
\begin{thm}
The theta-rank of the regular $N$-gon is exactly $\lceil N/4 \rceil$.
\end{thm}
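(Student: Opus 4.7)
The plan is to reduce the question to a univariate nonnegative polynomial interpolation problem on $[-1,1]$ and then invoke the classical Markov--Lukács theorem.

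First I would exploit the $y\to -y$ symmetry of $\ell$. Any trigonometric polynomial $h$ of degree $\leq k$ on the $N$-gon can be written uniquely as $h(x,y) = p(x) + y\,q(x)$ with $\deg p \leq k$ and $\deg q \leq k-1$; averaging an SOS decomposition $\ell = \sum_i h_i(x,y)^2$ with its $y\to -y$ reflection yields
\[
\ell(x) \;\equiv\; \sum_i p_i(x)^2 + (1-x^2)\sum_i q_i(x)^2 \pmod{V(x)},
\]
where $V(x)\in\RR[x]$ is the polynomial of degree $\lceil N/2 \rceil$ vanishing at the $\lceil N/2 \rceil$ distinct $x$-coordinates of the vertices (explicitly $V = T_{N/2}$ when $N$ is even, and $V = (x+1)g_N(x)$ when $N$ is odd, via the factorization of $T_N(x)+1$). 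By the Markov--Lukács theorem, the existence of such $P=\sum p_i^2$, $Q=\sum q_i^2$ with $\deg P \leq 2k$ and $\deg Q \leq 2k-2$ is equivalent to the existence of a polynomial $F(x)$ of degree $\leq 2k$, nonnegative on $[-1,1]$, with $F \equiv \ell \pmod{V}$. So the theta-rank is the smallest $k$ admitting such an $F$.

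For the lower bound (theta-rank $\geq \lceil N/4 \rceil$), I would argue as follows. If $k < \lceil N/4 \rceil$, a case analysis on $N \bmod 4$ shows that $2k < \lceil N/2 \rceil = \deg V$. Then $F - \ell$ has degree $\leq 2k < \deg V$ yet is divisible by $V$, which forces $F = \ell$. But $\ell = \cos(\pi/N) - x$ is strictly negative on $(\cos(\pi/N),1]\subset [-1,1]$, a contradiction.

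For the upper bound (theta-rank $\leq \lceil N/4 \rceil$), I would construct $F$ explicitly. In the cleanest case $N = 4m$, take
\[
F(x) \;=\; \bigl(\cos(\pi/N) - x\bigr) + \frac{1}{T_{N/2}'(\cos(\pi/N))}\,T_{N/2}(x),
\]
which has degree $N/2 = 2\lceil N/4 \rceil$ and, by the choice of the constant, has a double zero at $x = \cos(\pi/N)$ (the unique vertex $x$-value where $\ell$ vanishes). Sanity checks recover $F = (1-2x)^2/6$ for $N=3$ and $F = \tfrac{1}{\sqrt{2}}(x-1/\sqrt{2})^2$ for $N=4$. For $N \not\equiv 0 \pmod 4$, an analogous construction works with the constant replaced by a polynomial multiplier of $V$ of degree at most one, pinned by interpolation at $\cos(\pi/N)$ and one other point. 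The hard part is verifying that this $F$ is nonnegative on all of $[-1,1]$ and not just at the vertex $x$-values: the plan is to factor $F(x) = (x-\cos(\pi/N))^2 R(x)$ and prove $R \geq 0$ on $[-1,1]$. Equivalently, with the angular substitution $x = \cos\theta$ and $\phi = (\theta-\pi/N)/2$, the identity
\[
F(\cos\theta) \;=\; 2\sin\phi\,\sin(\phi+\pi/N) - \tfrac{2\sin(\pi/N)}{N}\sin(N\phi)
\]
reduces the claim to a trigonometric inequality which I would attack by factoring $\sin(N\phi) = \sin\phi\cdot U_{N-1}(\cos\phi)$ and estimating the resulting bracket $\sin(\phi+\pi/N) - \tfrac{\sin(\pi/N)}{N}U_{N-1}(\cos\phi)$ so that it has the same sign as $\sin\phi$ throughout $\theta\in[0,\pi]$. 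This trigonometric estimate is where most of the technical work lies, and is the main obstacle for the upper bound.
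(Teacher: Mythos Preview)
Your overall plan is sound and closely parallels the paper's argument; both reduce to a univariate nonnegative interpolation problem and construct essentially the same polynomial $F(x)=\ell(x)+c\,V(x)$ with $c$ chosen so that $\cos(\pi/N)$ is a double root. There are, however, two points where the paper differs and where your proposal can be streamlined.

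\textbf{Upper bound: the nonnegativity check.} You identify the hard step correctly, but your proposed attack via the trigonometric identity and an estimate on $\sin(\phi+\pi/N)-\tfrac{\sin(\pi/N)}{N}U_{N-1}(\cos\phi)$ is more laborious than necessary. The paper instead proves the stronger statement that $F\geq 0$ on all of $\RR$ (not just $[-1,1]$), via a short convexity argument: for $N=4m$ one has $V\propto T_{2m}$, and since the roots of $T_{2m}''$ interlace those of $T_{2m}'$, the Chebyshev polynomial $T_{2m}$ is convex on $[\cos(\pi/(2m)),\infty)$, hence lies above its tangent at $\cos(\pi/N)$ there; on $[-1,\cos(\pi/(2m))]$ one uses that $T_{2m}$ attains its minimum $-1$ at $\cos(\pi/(2m))$ together with the positive slope of the tangent. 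This two-line argument replaces your sign analysis entirely. For $N\not\equiv 0\pmod 4$ the paper expresses $q_N$ (or $x q_N$) as a positive combination of Chebyshev polynomials of various orders and applies the same tangent lemma termwise; this handles the ``polynomial multiplier of $V$ of degree at most one'' you allude to without further case work on the trigonometric side.

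\textbf{Framing and generality.} Your use of Markov--Luk\'acs to require nonnegativity only on $[-1,1]$ is a genuine (and nice) relaxation compared to the paper, which asks for global nonnegativity and then invokes the univariate SOS theorem. Conversely, the paper frames the upper bound through the general notion of \emph{nonnegative interpolation degree} for $k$-level polytopes (Propositions~6--8), which is what lets the same argument determine the theta-rank of the parity polytope as a byproduct. Your symmetrization $y\mapsto -y$ is specific to the polygon, whereas the paper's interpolation viewpoint applies to any polytope whose facet levels satisfy the tangent condition.

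\textbf{Lower bound.} Your argument is correct and equivalent to the paper's; the paper phrases it as a root-count for the trigonometric polynomial $g(\cos\theta,\sin\theta)$ in $\ell=SOS+g$, while you phrase it after the univariate reduction. Both give $2k\geq N/2$.
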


We first prove the lower bound which is apparently well-known though does not seem to be written explicitly anywhere. The argument we present below is due to G. Blekherman.
\begin{prop}
\label{prop:lbthetarank}
The sum-of-squares hierarchy of the regular $N$-gon requires at least $N/4$ iterations.
\end{prop}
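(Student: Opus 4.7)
The plan is to reduce the bivariate sum-of-squares question to a univariate polynomial identity on the $x$-coordinates of the vertices, and then derive a contradiction via a degree/zero count. The two structural facts I would exploit are that the ideal $I$ of $\cX_N$ is invariant under the reflection $y\mapsto -y$ and that it contains the relation $y^2-(1-x^2)$, since all vertices lie on the unit circle.

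Starting from any putative certificate $\cos(\pi/N)-x = \sum_i h_i^2$ in $\cF(N,\RR)$ with $\deg h_i \le k$, I would first split each $h_i = A_i + B_i$ into its $y$-even and $y$-odd parts. Since $\ell$ is $y$-even while the cross terms $A_iB_i$ are all $y$-odd, the $y$-odd contributions to $\sum_i h_i^2$ must vanish modulo $I$; hence without loss of generality every $h_i$ is either purely $y$-even or purely $y$-odd. Using $y^2 \equiv 1-x^2 \pmod{I}$, every even $h_i$ becomes a univariate polynomial $p_i(x)$ of degree at most $k$, and every odd $h_i$ becomes $y\,q_i(x)$ with $\deg q_i \le k-1$. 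The certificate then reduces to
\[
\cos(\pi/N) - \xi \;=\; P(\xi) + (1-\xi^2)\,Q(\xi) \qquad \text{for every distinct $x$-coordinate $\xi$ of } \cX_N,
\]
where $P = \sum_i p_i^2$ and $Q = \sum_i q_i^2$ are univariate SOS polynomials of degrees at most $2k$ and $2k-2$, and where $\cX_N$ has exactly $\lceil N/2\rceil$ distinct $x$-coordinates $\xi_j = \cos((2j-1)\pi/N)$.

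Next I would count zeros. The polynomial $R(x) = P(x) + (1-x^2)Q(x) - \cos(\pi/N) + x$ has degree at most $2k$ and vanishes at $\lceil N/2\rceil$ distinct points, so if $2k < \lceil N/2\rceil$ then $R \equiv 0$ in $\RR[x]$. Evaluating the resulting identity at $x=1$ yields $P(1) = \cos(\pi/N)-1 < 0$, contradicting $P(1) \ge 0$ for a sum of squares. Hence $2k \ge \lceil N/2\rceil$, i.e.\ $k \ge \lceil N/4\rceil \ge N/4$. The only subtle point, which I view as the main bit of bookkeeping, is justifying the even/odd decomposition modulo the full ideal $I$ of $\cX_N$ rather than merely on the unit circle; this is immediate from the reflection symmetry of $\cX_N$. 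Everything else is a short SoS instance of the classical Markov--Lukács obstruction to writing a linear function that changes sign on $[-1,1]$ in the form $P(x)+(1-x^2)Q(x)$ with $P,Q\ge 0$.
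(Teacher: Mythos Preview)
Your argument is correct, but it takes a genuinely different route from the paper's proof. The paper works directly on the unit circle: writing $\ell = SOS + g$ with $g \in I$, it observes that $g(\cos\theta,\sin\theta)$ is a trigonometric polynomial with $N$ roots (the vertices) and hence has degree at least $N/2$; since $\ell$ has degree $1$, this forces $\deg SOS \ge N/2$. This is a three-line argument that never leaves the trigonometric setting.

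Your approach instead reduces to a univariate certificate: using the $y\mapsto -y$ invariance of $I$ together with $x^2+y^2-1\in I$, you rewrite the certificate as $\cos(\pi/N)-x \equiv P(x)+(1-x^2)Q(x)$ on the $\lceil N/2\rceil$ distinct $x$-coordinates, with $P,Q$ univariate sums of squares. A degree count then forces an exact identity in $\RR[x]$, and evaluation at $x=1$ yields the contradiction. This is longer but more elementary (it only uses the fundamental theorem of algebra for univariate polynomials), and it dovetails nicely with the paper's subsequent Section~\ref{sec:soshierarchy}, where the \emph{upper} bound on the theta-rank is likewise obtained by passing to univariate nonnegative interpolation on the same $\lceil N/2\rceil$ abscissae. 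In effect you have identified the Markov--Luk\'acs obstruction that is the exact counterpart to the constructive interpolation argument used for the matching upper bound.
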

\begin{proof}
Assume we can write:
\begin{equation}
\label{eq:ellSOS}
\ell(x,y) = SOS(x,y) + g(x,y)
\end{equation}
where $g(x,y)$ is a polynomial that vanishes on the vertices of the regular $N$-gon. We will show that $\deg SOS \geq N/2$. Since the trigonometric polynomial $g(\cos\theta,\sin\theta)$ has $N$ roots on the unit circle, it must have degree at least $N/2$ (a nonzero trigonometric polynomial of degree $d$ has at most $2d$ roots on the unit circle; also $g(\cos\theta,\sin\theta)$ is not identically zero because otherwise $\ell$ would be nonnegative on the whole unit circle which it is not because $\ell$ defines a facet of the $N$-gon). Thus we get that $\deg SOS = \deg (\ell - g) \geq N/2$ which is what we want.
\end{proof}

The rest of this section is mainly devoted to the proof that $\lceil N/4 \rceil$ iterations of the sum-of-squares hierarchy are sufficient for the regular $N$-gon. To do so we exploit the fact that the regular $N$-gon is a \emph{$k$-level} polytope where $k=\lceil N/2 \rceil$. In fact we develop new general results about the theta-rank of $k$-level polytopes.

\subsection{$k$-level polytopes and nonnegative polynomial interpolation}

In this section we study polytopes that are \emph{$k$-level}, and we see how this property implies an upper bound on the sum-of-squares hierarchy. The material presented in this section concerns general polytopes $P$, and is not restricted to the case of regular $N$-gons.

We first recall the definition of a $k$-level polytope (see e.g., \cite{gouveia2012convex}):
\begin{defn}
A polytope $P$ is called \emph{$k$-level} if every facet-defining linear function of $P$ takes at most $k$ different values on the vertices of the polytope.
\end{defn}
\begin{example}[Regular polygons]
\label{ex:regular_polygon_levels}
It is easy to verify that the regular $N$-gon is a $\lceil N/2 \rceil$-level polytope. Indeed the values taken by the facet-defining linear function $\ell(x,y) = \cos(\pi/N) - x$ on the $N$ vertices of the polytope are:
\[
0, \; x_{1,N} - x_{2,N}, \; \dots, \; x_{1,N} - x_{\lceil N/2 \rceil,N} \quad \text{ where } \quad 
x_{i,N} = \cos(\theta_i) = \cos\left(\frac{(2i-1)\pi}{N}\right). \]
By symmetry, the number of values taken by the other facet-defining linear functions is also $\lceil N/2 \rceil$, and thus the regular $N$-gon is $\lceil N/2 \rceil$-level.
\end{example}

It was shown in \cite[Theorem 11]{gouveia2012convex}, via a Lagrange interpolation argument, that if a polytope $P$ is $k$-level, then the $(k-1)$'st level of the sum-of-squares hierarchy is exact.
To prove this result, the idea is to look at a ``one-dimensional projection'' of the problem: Let $\ell(\mb{x}) \geq 0$ be a facet-defining linear inequality for $P$ and assume that $\ell(\mb{x})$ takes the $k$ values $0=a_0 < a_1 < \dots < a_{k-1}$ on the vertices of $P$. To get an upper bound on the sum-of-squares hierarchy for $P$, we need to express the function $\ell$ on the vertices of $P$ as a sum-of-squares. To do this, one can proceed as follows: let $p$ be a \emph{univariate} polynomial that satisfies $p(a_i) = a_i$ for all $i=0,\dots,k-1$ and that is \emph{globally nonnegative} on $\RR$. Since nonnegative univariate polynomials are sum-of-squares, this means that we can write $p = \sum_{i} h_i^2$  for some polynomials $h_i$. Then observe that for any vertex $\mb{x}$ of the polytope $P$ we have 
\[ \ell(\mb{x}) \overset{(*)}{=} p(\ell(\mb{x})) = \sum_{i} h_i(\ell(\mb{x}))^2, \]
where equality $(*)$  follows from the fact $\ell(x) \in \{a_0,\dots,a_{k-1}\}$ since $\mb{x}$ is a vertex of $P$.
This shows that $\ell$ coincides on the vertices of $P$ with a sum-of-squares polynomial of degree $d = \deg p$.
If one can find such a sum-of-squares certificate of degree $d$ for all the facet-defining linear functions of $P$, then this shows that the $d/2$-level of the sum-of-squares hierarchy is exact.

Note that there is a simple way to construct a polynomial $p$ that satisfies the required conditions, i.e., $p(a_i) = a_i$ for all $i=0,\dots,k-1$ and $p$ globally nonnegative: One can simply take a Lagrange interpolating polynomial $r$ of degree $k-1$ such that $r(a_i) = \sqrt{a_i}$ and then take $p(x) = r(x)^2$. The resulting polynomial $p$ has degree $2(k-1)$ and thus gives an upper bound of $k-1$ for the sum-of-squares hierarchy of $k$-level polytopes. This is the construction used in \cite[Theorem 11]{gouveia2012convex}.

It turns out however that one can sometimes find a polynomial $p$ that has smaller degree. This motivates the following definition:

\begin{defn}
Let $0=a_0 < a_1 < \dots < a_{k-1}$ be $k$ points on the real line. We say that the sequence $(a_0,\dots,a_{k-1})$ has \emph{nonnegative interpolation degree $d$} if there exists a globally nonnegative polynomial $p$ with $\deg p = d$ such that $p(a_i) = a_i$ for all $i=0,\dots,k-1$.
\end{defn}

The construction outlined above with Lagrange interpolating polynomials shows that any sequence of length $k$ has nonnegative interpolation degree at most $2(k-1)$.
Also note that the nonnegative interpolation degree of any sequence with $k$ elements must be at least $k$: indeed if $p$ has degree $\leq k-1$ and $p(a_i) = a_i$ for all $i=0,\dots,k-1$ then $p$ must be equal to the linear polynomial $x$, which is clearly not nonnegative.


The previous discussion concerning the sum-of-squares hierarchy for $k$-level polytopes can be summarized in the following proposition:

\begin{prop}
\label{prop:interpdegthetarank}
Let $P$ be a $k$-level polytope in $\RR^n$. Assume that for any facet-defining linear functional $\ell$ of $P$, the $k$ values taken by $\ell$ on the vertices of $P$ have nonnegative interpolation degree $d$. Then the $d/2$-iteration of the sum-of-squares hierarchy for $P$ is exact  (note that $d$ is necessarily even).
\end{prop}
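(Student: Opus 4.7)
The plan is to carry out in detail the pullback argument sketched in the paragraph preceding the proposition. Let $\ell$ be any facet-defining linear function of $P$, and let $0 = a_0 < a_1 < \dots < a_{k-1}$ denote the $k$ distinct values that $\ell$ takes on the vertices of $P$. By hypothesis the sequence $(a_0, \dots, a_{k-1})$ has nonnegative interpolation degree $d$, so there exists a polynomial $p \in \RR[t]$ of degree exactly $d$, globally nonnegative on $\RR$, with $p(a_i) = a_i$ for $i=0,\dots,k-1$.

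Next I would invoke the classical fact that a univariate polynomial nonnegative on $\RR$ is a sum of two squares in $\RR[t]$: factoring $p$ over $\CC$ and grouping complex-conjugate roots yields $p(t) = r(t)^2 + s(t)^2$ with $r, s \in \RR[t]$ of degree at most $d/2$. (In particular, the very existence of such a decomposition forces $d$ to be even, as the statement remarks.) Define the multivariate polynomials $h_1(\mb{x}) := r(\ell(\mb{x}))$ and $h_2(\mb{x}) := s(\ell(\mb{x}))$; both have degree at most $d/2$ in $\mb{x}$. For every vertex $\mb{x}$ of $P$, the value $\ell(\mb{x})$ lies in $\{a_0,\dots,a_{k-1}\}$, so the interpolation property of $p$ gives
\[ \ell(\mb{x}) \;=\; p(\ell(\mb{x})) \;=\; r(\ell(\mb{x}))^2 + s(\ell(\mb{x}))^2 \;=\; h_1(\mb{x})^2 + h_2(\mb{x})^2. \]
Consequently $\ell - (h_1^2 + h_2^2)$ vanishes on every vertex of $P$, i.e., the identity $\ell = h_1^2 + h_2^2$ holds modulo the vanishing ideal of the vertex set. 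This is precisely a sum-of-squares certificate of the nonnegativity of $\ell$ on $P$ using squares of degree at most $d/2$. Applying the construction to each facet-defining linear function of $P$ (each of which satisfies the hypothesis) then yields exactness of the $d/2$-th level of the sum-of-squares hierarchy.

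There is essentially no obstacle in this proof: once the definition of nonnegative interpolation degree is in place, the argument is a direct pullback of a univariate nonnegative interpolant along $\ell$. The only nontrivial ingredient, namely the sum-of-two-squares decomposition of a nonnegative univariate polynomial on $\RR$, is classical, and it also delivers the evenness of $d$ automatically.
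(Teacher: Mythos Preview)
Your proof is correct and follows essentially the same approach as the paper, which in fact does not give a separate proof of this proposition but simply summarizes the pullback argument laid out in the preceding paragraphs. Your only refinement is invoking the sharper two-squares decomposition for nonnegative univariate polynomials rather than a generic sum-of-squares representation, which is harmless and yields the same degree bound.
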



In the rest of this section we study sequences of length $k$ that have nonnegative interpolation degree equal to $k$ (i.e., the minimum possible value). Let $k$ be an even integer and let $0=a_0 < a_1 < \dots < a_{k-1}$ be $k$ points on the positive real axis. The question that we thus consider is: does there exist a univariate polynomial $p$ such that:
\begin{equation}
\label{eq:pcond}
\left\{
\begin{array}{l}
\deg p = k\\[0.2cm]
p(a_i) = a_i \quad \forall i=0,\dots,k-1\\[0.2cm]
p(x) \geq 0 \quad \forall x \in \RR.
\end{array}\right.
\end{equation}

The next proposition gives a simple geometric characterization of the existence of a polynomial $p$ that satisfies \eqref{eq:pcond}:
\begin{prop}
\label{prop:interp}
Let $q(x) = \prod_{i=0}^{k-1} (x-a_i)$ be the monic polynomial of degree $k$ that vanishes at the $a_i$'s. Then the following are equivalent:\\
(i) There exists a polynomial $p$ that satisfies \eqref{eq:pcond};\\
(ii) The curve of $q(x)$ is above its tangent at $x=0$, i.e.:
\begin{equation}
\label{eq:qcond}
q(x) \geq q'(0) x \quad \forall x \in \RR.
\end{equation}
\end{prop}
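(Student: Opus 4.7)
The plan is to parameterize the one-parameter family of polynomials of degree $k$ that interpolate the data $(a_i, a_i)$, and then translate the nonnegativity condition on $p$ into a geometric condition on $q$ by a first-order optimality argument at the origin.

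First I would observe that the polynomial $p_0(x) = x$ trivially satisfies $p_0(a_i) = a_i$ for all $i$. Since the space of polynomials of degree $\leq k$ vanishing on $a_0,\dots,a_{k-1}$ is exactly $\{c\,q(x) : c \in \RR\}$, every polynomial of degree $k$ satisfying the interpolation conditions has the form
\[ p(x) = x + c\,q(x), \qquad c \neq 0. \]
Because $k$ is even and $q$ is monic, the leading coefficient of $p$ is $c$, so in order for $p$ to be nonnegative on all of $\RR$ we must have $c > 0$. Thus (i) is equivalent to the existence of some $c > 0$ such that $x + c\,q(x) \geq 0$ on $\RR$, which after dividing by $c$ is the same as
\[ q(x) \geq -\tfrac{1}{c}\,x \quad \forall x \in \RR. \]
Setting $\alpha = -1/c$, this says: there exists $\alpha < 0$ such that the line $y = \alpha x$ lies below the graph of $q$.

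The crucial step — and the step I expect to be the only non-routine one — is to show that the \emph{only} slope $\alpha$ for which this can happen is $\alpha = q'(0)$. The reason is that $q(0) = 0$ since $a_0 = 0$, so the function $\varphi(x) := q(x) - \alpha x$ satisfies $\varphi(0) = 0$. If $\varphi \geq 0$ on $\RR$, then $0$ is a global minimizer of the smooth function $\varphi$, and first-order optimality forces $\varphi'(0) = 0$, i.e.\ $\alpha = q'(0)$. In other words, once we insist on tangency at the origin, there is no slack in the slope — it is completely pinned down. Since $q'(0) = \prod_{i=1}^{k-1}(0 - a_i) = (-1)^{k-1}\prod_{i=1}^{k-1} a_i < 0$ (recall $k$ is even and the $a_i$ are positive), the value $c = -1/q'(0)$ is a legitimate positive choice.

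Putting these pieces together gives both directions: if (ii) holds, then $p(x) = x - q(x)/q'(0)$ is a polynomial of degree $k$ with the required interpolation properties that is nonnegative on $\RR$, proving (i); conversely, if (i) holds and $p = x + c q$ with $c > 0$ is nonnegative, then $q(x) \geq -x/c$ for all $x$, and the optimality argument forces $-1/c = q'(0)$, from which $q(x) \geq q'(0)\,x$ on $\RR$, proving (ii). The argument is clean because it really is just the interplay of three facts: the interpolation conditions leave a one-parameter affine family, nonnegativity fixes the sign of the free parameter, and the vanishing of $q$ at the origin fixes its magnitude via the first-order condition.
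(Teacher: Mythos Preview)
Your proof is correct and follows essentially the same route as the paper's: parameterize the interpolants as $p(x)=x+c\,q(x)$, then use that $p(0)=0$ together with $p\geq 0$ forces $p'(0)=0$ (you phrase this as first-order optimality, the paper phrases it as $0$ being a double root), which pins down $c=-1/q'(0)$ and reduces nonnegativity of $p$ to the tangent condition \eqref{eq:qcond}. The only cosmetic difference is that you make explicit the leading-coefficient argument giving $c>0$, which the paper leaves implicit.
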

\begin{proof}
Note that if a polynomial $p$ satisfies \eqref{eq:pcond} then it must be of the form:
\[
p(x) = \alpha q(x) + x
\]
where $\alpha$ is a scalar. Furthermore, since $p$ is nonnegative and $p(0) = 0$, then 0 must be a double root of $p$, i.e., $p'(0) = 0$. This means that we must have $\alpha q'(0) + 1 = 0$ which implies $\alpha = -1/q'(0)$. In other words, the unique $p$ which can satisfy conditions \eqref{eq:pcond} is the polynomial:
\[
p(x) = -\frac{q(x)}{q'(0)} + x.
\]
Observe that $p$ is, up to scaling, equal to the difference between $q(x)$ and its linear approximation at $x=0$:
\[ p(x) = -\frac{1}{q'(0)} ( q(x) - q'(0) x ). \]
Since $q'(0) < 0$ (since $k$ is even), we see that $p(x) \geq 0$ if and only if the curve of $q$ is above its linear approximation at $x=0$.
\end{proof}

\begin{example}[Equispaced and subadditive sequences]
To illustrate this result consider the sequence $a_i = i$ for $i=0,\dots,k-1$. Figure \ref{fig:plot_q_arithmetic} shows the plot of the polynomial $q(x) = \prod_{i=0}^{k-1} (x-a_i)$ for $k=6$ and its tangent at $x=0$. We see from the figure that the curve of $q$ is always above its linear approximation at $x=0$. This means, by Proposition \ref{prop:interp}, that the nonnegative interpolation degree of the sequence $0,1,\dots,5$ is $6$.
\begin{figure}[ht]
  \centering
  \includegraphics[width=7cm]{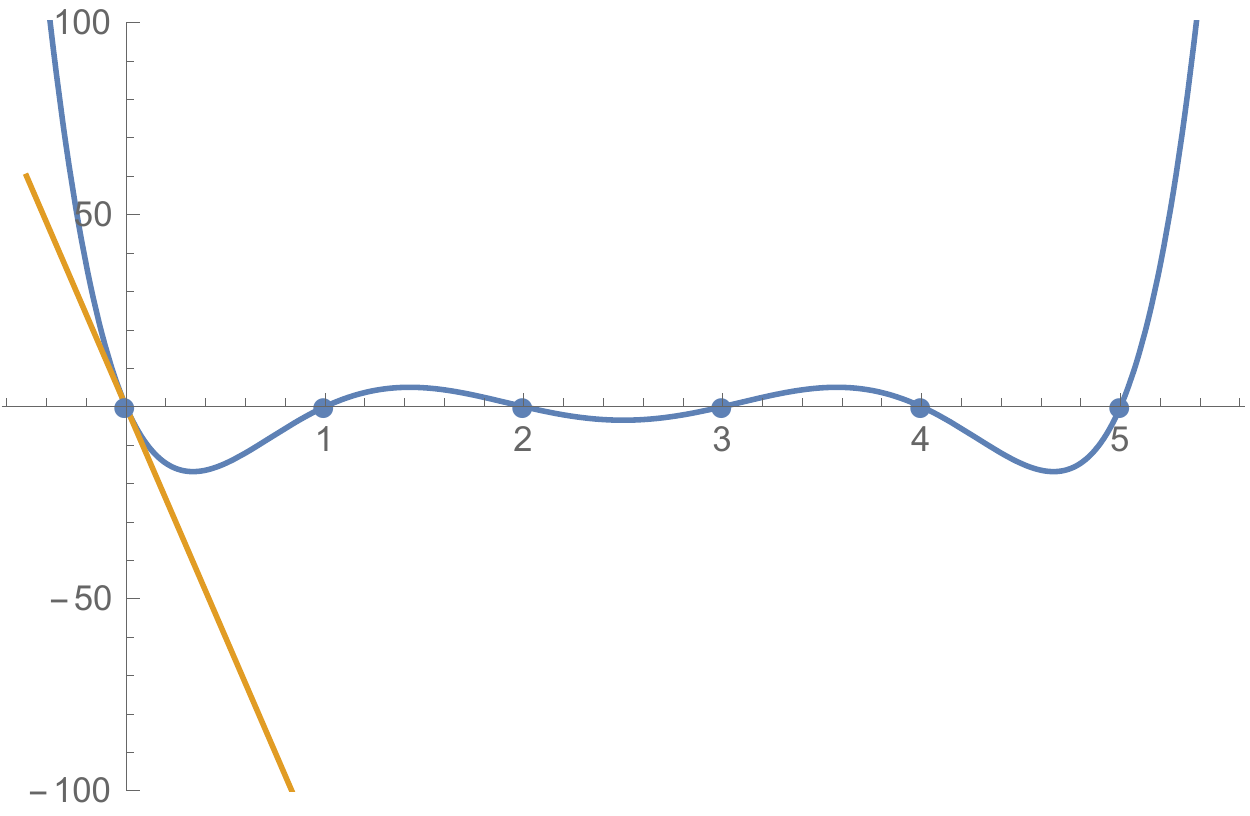}
  \caption{Plot of the polynomial $q(x) = \prod_{i=0}^5 (x - i)$ and its tangent at $x=0$. We see that the tangent is always below the curve of $q$. Thus from Proposition \ref{prop:interp} there is a polynomial $p$ of degree 6 that satisfies \eqref{eq:pcond} for the sequence $a_i = i$, $(i=0,\dots,5)$.}
  \label{fig:plot_q_arithmetic}
\end{figure}
One can can actually prove that the nonnegative interpolation degree of the sequence $a_i = i$ for $i=0,\dots,k-1$ is equal to $k$ for any $k$ even. In fact this is true even more generally for any sequence $a_0,\dots,a_{k-1}$ that is \emph{subadditive}, i.e., that satisfies $a_{i+j} \leq a_i + a_j$ for all $i,j$ such that $i+j \leq k-1$. This is the object of the next proposition:
\begin{prop}
\label{prop:subadditive}
Let $k$ be an even integer and assume that $0=a_0 < a_1 < \dots < a_{k-1}$ is a subadditive sequence, i.e., $a_{i+j} \leq a_i + a_j$ for all $i,j$ such that $i+j \leq k$. Then $(a_i)_{i=0,\dots,k-1}$ has nonnegative interpolation degree $k$; in other words there exists a globally nonnegative polynomial $p$ of degree $k$ such that $p(a_i) = a_i$ for all $i=0,\dots,k-1$.
\end{prop}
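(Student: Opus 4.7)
The plan is to invoke Proposition \ref{prop:interp}: it suffices to prove $q(x) \geq q'(0)\,x$ for all $x \in \RR$, where $q(x) = \prod_{i=0}^{k-1}(x - a_i)$. Since $a_0 = 0$, I would factor $q(x) = x\,\tilde{q}(x)$ with $\tilde{q}(x) = \prod_{i=1}^{k-1}(x - a_i)$, so that $q'(0) = \tilde{q}(0) = -\prod_{i=1}^{k-1} a_i$ (using that $k$ is even, hence $k-1$ is odd). The inequality to prove becomes
\[
x\bigl[\tilde{q}(x) - \tilde{q}(0)\bigr] \geq 0 \quad \text{for all } x \in \RR.
\]

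I would dispose of the easy cases first. For $x \leq 0$, writing $\tilde{q}(x) = -\prod_{i=1}^{k-1}(a_i - x)$ and using $a_i - x \geq a_i$ gives $\tilde{q}(x) \leq \tilde{q}(0)$, so its product with $x \leq 0$ is nonnegative. For $x > 0$, if $\tilde{q}(x) \geq 0$ the inequality is immediate since $\tilde{q}(0) < 0$. A sign analysis of $\tilde{q}$ (roots $a_1 < \cdots < a_{k-1}$, leading coefficient $+1$, and $k-1$ odd) shows that $\tilde{q}(x) < 0$ for $x > 0$ occurs precisely on the intervals $(a_{2j}, a_{2j+1})$ for $j = 0, 1, \dots, k/2 - 1$ (with the convention $a_0 = 0$).

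The substantive step is to show $|\tilde{q}(x)| \leq |\tilde{q}(0)| = \prod_{i=1}^{k-1} a_i$ on each such interval. Writing
\[
|\tilde{q}(x)| = \prod_{i=1}^{2j}(x - a_i) \cdot \prod_{i=2j+1}^{k-1}(a_i - x),
\]
I would bound each factor using the interval endpoints together with subadditivity: for $1 \leq i \leq 2j$,
\[
x - a_i \leq a_{2j+1} - a_i \leq a_{2j+1-i},
\]
using subadditivity with index sum $i + (2j+1-i) = 2j+1 \leq k-1$, and for $2j+1 \leq i \leq k-1$,
\[
a_i - x \leq a_i - a_{2j} \leq a_{i-2j},
\]
using subadditivity with index sum $(i-2j) + 2j = i \leq k-1$. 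Multiplying all bounds gives
\[
|\tilde{q}(x)| \leq (a_1 a_2 \cdots a_{2j}) \cdot (a_1 a_2 \cdots a_{k-1-2j}).
\]
The proof concludes by invoking monotonicity $a_m \leq a_{m+2j}$ to replace the second factor by $a_{2j+1} a_{2j+2} \cdots a_{k-1}$, yielding $|\tilde{q}(x)| \leq \prod_{i=1}^{k-1} a_i$, i.e., $\tilde{q}(x) \geq \tilde{q}(0)$.

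The main obstacle is identifying the correct way to deploy subadditivity. A symmetric AM-GM pairing $(x - a_i)(a_{i'} - x) \leq \tfrac{1}{4}(a_{i'} - a_i)^2$ loses too much information; the effective idea is to bound each factor separately against the appropriate interval endpoint, apply subadditivity to the resulting differences $a_{2j+1} - a_i$ and $a_i - a_{2j}$, and then close the estimate by a single use of monotonicity. Notice that the index bookkeeping works out cleanly because $(2j) + (k-1-2j) = k-1$, so the two partial products one obtains are exactly complementary to $\prod_{i=1}^{k-1} a_i$ after monotonicity is applied.
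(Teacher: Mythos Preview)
Your proof is correct and follows essentially the same route as the paper: reduce to Proposition~\ref{prop:interp}, factor out $x$, and on the relevant interval bound each factor $x-a_i$ or $a_i-x$ by the appropriate endpoint difference, apply subadditivity to turn those differences into single $a_m$'s, then use monotonicity to reassemble $\prod_{i=1}^{k-1} a_i$. The only cosmetic difference is that you first isolate the intervals $(a_{2j},a_{2j+1})$ on which $\tilde q(x)<0$ and argue only there, whereas the paper bounds $\left|\prod_{i=1}^{k-1}(x-a_i)\right|$ on every interval $(a_y,a_{y+1})$ regardless of sign; the core estimate is identical.
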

\begin{proof}[Sketch of proof]
Let $q(x) = \prod_{i=0}^{k-1} (x-a_i)$ and note that $q'(0) = -A$ where $A = \prod_{i=1}^{k-1} a_i$. To use Proposition \ref{prop:interp}, we need to show that the polynomial
\begin{equation}
\label{eq:diffsub}
 q(x) - q'(0) x = x \left[A + \prod_{i=1}^{k-1} (x-a_i)\right]
\end{equation}
is nonnegative for all $x \in \RR$. It is easy to see that the expression \eqref{eq:diffsub} is nonnegative if $x \leq 0$ or $x \geq a_{k-1}$. For $x \in (0,a_{k-1})$, let $y$ be the index in $\{0,1,\dots,k-2\}$ such that $a_{y} \leq x \leq a_{y+1}$. Then we have:
\[
\begin{aligned}
- \prod_{i=1}^{k-1} (x-a_i) \leq \left|\prod_{i=1}^{k-1} (x-a_i)\right| &= \prod_{i=1}^{y} (x-a_i) \cdot \prod_{i=y+1}^{k-1} (a_i-x) \\
&= [(x-a_1) (x-a_2) \dots (x-a_y)] \cdot [(a_{y+1}-x) \dots (a_{k-1}-x)]\\
&\overset{(a)}{\leq} [(a_{y+1} - a_1) (a_{y+1} - a_2) \dots (a_{y+1} - a_y)] \cdot [(a_{y+1} - a_y) \dots (a_{k-1} - a_y)]\\
&\overset{(b)}{\leq} (a_y a_{y-1} \dots a_1) (a_1 \dots a_{k-1-y})\\
&\overset{(c)}{\leq} (a_1 \dots a_y) (a_{y+1} \dots a_{k-1}) = A
\end{aligned}
\]
where in $(a)$ we used that $a_y \leq x \leq a_{y+1}$; in $(b)$ we used the subadditivity property of the sequence $(a_0,\dots,a_{k-1})$, and in $(c)$ we simply used the fact that $a_1 \leq a_{y+1}, a_2 \leq a_{y+2}, \dots, a_{k-1-y} \leq a_{k-1}$.
This shows that \eqref{eq:diffsub} is nonnegative for all $x \in (0,a_{k-1})$. Since \eqref{eq:diffsub} is also clearly nonnegative for all $x \leq 0$ and $x \geq a_{k-1}$, we can thus use Proposition \ref{prop:interp} to conclude the proof.
\end{proof}
\end{example}

\paragraph{Application for the parity polytope:} In \cite{fawzi2014equivariant} we considered the \emph{parity polytope} $\PAR_n$ defined as the convex hull of points in $\{-1,1\}^n$ that have an even number of $-1$'s:
\begin{equation}
 \label{eq:PARn}
 \PAR_n = \conv \left\{ x \in \{-1,1\}^n \; : \; \prod_{i=1}^n x_i = 1 \right\}.
\end{equation}
We showed in \cite[Proposition 3]{fawzi2014equivariant} that the sum-of-squares hierarchy for the parity polytope requires at least $n/4$ iterations. Using the interpolation argument given above for equispaced sequences, one can actually show that the theta-rank of the parity polytope is exactly $\lceil n/4 \rceil$. Indeed, it is not difficult to verify that the parity polytope is a $\lceil n/2 \rceil$-level polytope, and that the levels of each facet are \emph{equispaced}. Thus, by Proposition \ref{prop:interpdegthetarank} and since equispaced sequences of length $k$ have nonnegative interpolation degree $k$ (when $k$ is even) it follows that the theta-rank of the parity polytope is $\lceil n/4 \rceil$.
\begin{prop}
The theta-rank of the parity polytope $\PAR_n$ defined in \eqref{eq:PARn} is exactly $\lceil n/4 \rceil$.
\end{prop}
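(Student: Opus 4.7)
The plan is to combine the lower bound on the theta-rank of $\PAR_n$ already recalled above from \cite[Proposition 3]{fawzi2014equivariant} with a matching upper bound obtained from the interpolation machinery of Propositions \ref{prop:interpdegthetarank} and \ref{prop:subadditive}. The entire burden therefore reduces to the upper bound.

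First I would verify that $\PAR_n$ is $\lceil n/2 \rceil$-level and that each facet-defining linear function takes equispaced values on the vertices. The facets of $\PAR_n$ are given by inequalities of the form $\sum_{i \in S}(1 - x_i) + \sum_{i \notin S}(1 + x_i) \geq 2$ for $S \subseteq \{1,\ldots,n\}$ of odd cardinality (so that no vertex of $\PAR_n$ makes the left-hand side vanish). Evaluated on a vertex $v_T$ with $T$ the set of $-1$ coordinates and $|T|$ even, the left-hand side equals $2|S \triangle T|$. Since $|S \triangle T|$ has the parity of $|S| + |T|$, which is odd, $|S \triangle T|$ ranges over $\{1, 3, 5, \ldots\} \cap [1, n]$, a set of cardinality exactly $\lceil n/2 \rceil$. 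Subtracting the minimum value $2$ shows that the facet-defining linear function takes the equispaced values $\{0, 4, 8, \ldots, 4(\lceil n/2 \rceil - 1)\}$ on the vertices, as needed. Since the symmetry group of $\PAR_n$ (sign flips with even support together with coordinate permutations) acts transitively on the facets, a sum-of-squares certificate for one facet suffices.

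I would then apply Proposition \ref{prop:interpdegthetarank} with $k = \lceil n/2 \rceil$ and the equispaced sequence identified above. When $k$ is even, the sequence is subadditive of even length, so Proposition \ref{prop:subadditive} gives nonnegative interpolation degree $k$ and hence theta-rank at most $k/2 = \lceil n/4 \rceil$. When $k$ is odd (i.e.\ $n \equiv 1, 2 \pmod{4}$), I would augment the sequence by the next equispaced term $a_k = 4k$: the resulting sequence has even length $k + 1$ and is still subadditive, so Proposition \ref{prop:subadditive} yields a globally nonnegative polynomial of degree $k + 1$ that interpolates $a_0, \ldots, a_k$, and in particular $a_0, \ldots, a_{k-1}$. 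This shows that the original $k$-term sequence has nonnegative interpolation degree at most $k + 1$, giving theta-rank at most $(k+1)/2 = \lceil n/4 \rceil$, matching the lower bound.

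The only real subtlety is the parity bookkeeping around Proposition \ref{prop:subadditive}, which is stated only for even-length sequences; the case when $\lceil n/2 \rceil$ is odd is handled by the one-point augmentation trick above. Everything else is either a direct restatement of results already established in this section or a routine combinatorial verification parallel to Example \ref{ex:regular_polygon_levels} for regular polygons.
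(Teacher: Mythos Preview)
Your approach matches the paper's: the lower bound is quoted from \cite{fawzi2014equivariant} and the upper bound comes from Propositions~\ref{prop:interpdegthetarank} and~\ref{prop:subadditive} applied to the equispaced levels of the parity facets; you are in fact more careful than the paper about the case where $\lceil n/2\rceil$ is odd, handling it explicitly via the one-point augmentation. One minor slip: with the signs as written your facet functional evaluates at $v_T$ to $2(n-|S\triangle T|)$ rather than $2|S\triangle T|$---interchanging $(1-x_i)$ and $(1+x_i)$ (equivalently, replacing $S$ by its complement, which is still of odd cardinality exactly when $n$ is even, so one should index the parity facets by odd $S^c$) restores the formula you intended and the rest goes through unchanged.
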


Any $2$-level polytope has theta-rank one (see, e.g.~\cite{gouveia2012convex}).
One way to see this is to note that any sequence $0=a_0<a_1$ of length
$2$ has nonnegative interpolation degree $2$. One can see this from the 
Lagrange interpolation argument given earlier, but perhaps more directly 
from Proposition~\ref{prop:interp}. In this 
case the polynomial $q(x) = (x-a_0)(x-a_1) = x(x-a_1)$ is convex and so 
its graph is certainly above its linear approximation at $x=0$.

Any sequence of length $4$ has nonnegative interpolation degree either 
$4$ or $6$ (since the Lagrange interpolation argument constructs 
a nonnegative interpolant of degree $6$). 
Furthermore, there is a simple characterization 
of those sequences of length $4$ that have nonnegative interpolation
degree $4$.
\begin{prop}
    A sequence $0=a_0<a_1<a_2<a_3$ of length $4$ has nonnegative
    interpolation degree $4$ if and only if 
\begin{equation}
\label{eq:disccone}
(a_1+a_2+a_3)^2 \leq 4(a_1a_2+a_1a_3+a_2a_3).
\end{equation}
\end{prop}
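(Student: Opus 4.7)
The plan is to apply Proposition~\ref{prop:interp} with $q(x) = x(x-a_1)(x-a_2)(x-a_3)$, reduce the nonnegativity of $q(x)-q'(0)x$ to a quadratic nonnegativity condition, and then invoke the discriminant criterion.

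First I would compute $q'(0) = -a_1 a_2 a_3$ (the derivative of a product evaluated at $0$ picks up one sign from each linear factor). By Proposition~\ref{prop:interp}, a nonnegative interpolant of degree $4$ exists iff
\begin{equation*}
q(x) + a_1 a_2 a_3\, x \geq 0 \quad \forall x \in \RR.
\end{equation*}

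Next I would expand $(x-a_1)(x-a_2)(x-a_3) = x^3 - (a_1+a_2+a_3)x^2 + (a_1 a_2 + a_1 a_3 + a_2 a_3) x - a_1 a_2 a_3$ and observe that the constant term cancels with the $a_1 a_2 a_3\, x$ contribution after multiplying by $x$, giving
\begin{equation*}
q(x) + a_1 a_2 a_3\, x = x^2 \bigl[x^2 - (a_1+a_2+a_3) x + (a_1 a_2 + a_1 a_3 + a_2 a_3)\bigr].
\end{equation*}
Since $x^2 \geq 0$ always, global nonnegativity of the left side is equivalent to global nonnegativity of the quadratic factor in brackets.

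Finally, a monic quadratic is nonnegative on $\RR$ iff its discriminant is nonpositive, which gives
\begin{equation*}
(a_1+a_2+a_3)^2 - 4(a_1 a_2 + a_1 a_3 + a_2 a_3) \leq 0,
\end{equation*}
precisely the stated condition~\eqref{eq:disccone}. There is no real obstacle here: once one notices the factor of $x^2$ coming from the double root that any nonnegative $p$ must have at $0$ (already exploited in the proof of Proposition~\ref{prop:interp}), the problem collapses to a one-line discriminant check. The only small care needed is to verify that $x$ divides $(x-a_1)(x-a_2)(x-a_3) + a_1 a_2 a_3$, which is immediate from evaluating at $x=0$.
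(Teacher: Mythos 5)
Your proposal is correct and is essentially identical to the paper's own proof: both appeal to Proposition~\ref{prop:interp}, factor $q(x)-q'(0)x$ as $x^2\bigl(x^2-(a_1+a_2+a_3)x+(a_1a_2+a_1a_3+a_2a_3)\bigr)$, and conclude via the discriminant of the quadratic factor. No issues.
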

\begin{proof}
    We appeal to Proposition~\ref{prop:interp}. 
    In this case $q(x)-q'(0)x = x^2(x^2 - (a_1+a_2+a_3)x +
    (a_1a_2+a_1a_3+a_2a_3))$. This is nonnegative for all $x$ 
    if and only if the quadratic polynomial 
    $x^2 - (a_1+a_2+a_3)x + (a_1a_2+a_1a_3+a_2a_3)$ 
    is nonnegative for all $x$. This occurs precisely when the 
    discriminant is nonpositive, i.e.\
    \[ (a_1+a_2+a_3)^2 - 4(a_1a_2+a_1a_3+a_2a_3) \leq 0.\]
\end{proof}
Geometrically, the set of $(a_1,a_2,a_3)$ satisfying~\eqref{eq:disccone}
is the largest convex quadratic cone centred at $(1,1,1)$ that fits 
inside the nonnegative orthant. It is remarkable that
these sequences form a convex cone.

It would be interesting to understand, for general $k$, 
the set of sequences $0=a_0<a_1<\cdots < a_{k-1}$ of length $k$ with 
nonnegative interpolation degree $k$. For example,  
motivated by the construction of psd lifts of polytopes we pose the 
following problem.

\begin{question}
Give a simple (i.e., easy-to-check) sufficient condition for a sequence $0=a_0 < a_1 < \dots < a_{k-1}$ to have nonnegative interpolation degree $k$.
\end{question}

In this section we worked with ordered sequences $(a_i)_{i=0,\dots,k-1}$ that start at $a_0 = 0$ and we considered the problem of finding a nonnegative polynomial $p$ that takes the same values as the linear polynomial $x$ at the points $a_0,\dots,a_{k-1}$. For the regular polygon it will be convenient to work with shifted sequences, and with linear polynomials that have negative slope. We record the following result which we will use later, and which is an equivalent formulation of Proposition \ref{prop:interp}:
\begin{prop}
\label{prop:interp2}
Let $k$ be an even integer and let $a_0 > a_1 > \dots > a_{k-1}$ be $k$ points on the real axis. Let $l(x)$ be a \emph{decreasing} linear function with $l(a_i) \geq 0$ for $i=1,\dots,k-1$ and $l(a_0) = 0$. Let $q$ be the monic polynomial that vanishes on the $a_i$'s, $q(x) = \prod_{i=0}^{k-1} (x-a_i)$. 

If the curve of $q(x)$ is above its tangent at $x=a_0$ then there exists a polynomial $p$ of degree $k$ that is globally nonnegative and such that $p(a_i) = l(a_i)$ for all $i=0,\dots,k-1$.
\end{prop}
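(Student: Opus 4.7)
The plan is to mimic the proof of Proposition~\ref{prop:interp} with two changes of variable: the ``basepoint'' is shifted from $0$ to $a_0$, and the linear template is a general decreasing affine function $l$ instead of the identity $x$. The underlying algebra is the same, only the signs need to be tracked carefully.

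First, I would observe that any polynomial $p$ of degree $k$ with $p(a_i) = l(a_i)$ for $i=0,\dots,k-1$ must be of the form $p(x) = \alpha q(x) + l(x)$ for some scalar $\alpha$, since $p - l$ is a polynomial of degree at most $k$ vanishing on the $k$ points $a_0,\dots,a_{k-1}$ and hence is a scalar multiple of $q$. Next, since $l(a_0) = 0$, the condition $p \geq 0$ forces $a_0$ to be a double root of $p$, i.e.\ $p'(a_0) = 0$. Differentiating the ansatz at $x=a_0$ and using $q(a_0)=0$ gives the equation $\alpha q'(a_0) + l'(a_0) = 0$, so $\alpha = -l'(a_0)/q'(a_0)$ is uniquely determined. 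Because $l$ is decreasing we have $l'(a_0)<0$, and because $q$ is monic of even degree $k$ with $a_0$ its largest root we have $q'(a_0)>0$; therefore $\alpha>0$.

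Finally, writing $l(x) = l'(a_0)(x-a_0)$ and substituting the value of $\alpha$, the polynomial $p$ can be rewritten as
\[ p(x) = \alpha\bigl[\, q(x) - q'(a_0)(x-a_0)\,\bigr]. \]
Since $\alpha>0$, nonnegativity of $p$ is equivalent to nonnegativity of $q(x) - q'(a_0)(x-a_0)$, which is precisely the statement that the graph of $q$ lies above its tangent line at $x=a_0$. This is the hypothesis, so $p$ is globally nonnegative, and by construction $p(a_i) = l(a_i)$ for every $i$.

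There is no real obstacle here beyond keeping track of signs; the only subtlety is verifying that $\alpha>0$, which is what lets us transfer the tangent-above-graph hypothesis directly into nonnegativity of $p$. The argument generalizes Proposition~\ref{prop:interp} cleanly, and one could even state a single ``shifted'' version subsuming both.
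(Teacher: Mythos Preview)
Your proof is correct and is exactly the adaptation the paper has in mind: the paper does not write out a separate proof of Proposition~\ref{prop:interp2} but simply records it as ``an equivalent formulation of Proposition~\ref{prop:interp},'' and your argument is precisely the proof of Proposition~\ref{prop:interp} replayed with the basepoint shifted to $a_0$ and the linear function replaced by a general decreasing affine $l$. The sign check $\alpha>0$ (via $l'<0$ and $q'(a_0)=\prod_{i\geq 1}(a_0-a_i)>0$) is the only new bookkeeping, and you handle it correctly.
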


\subsection{Application to the theta-rank of regular polygons}

We now go back to the regular $N$-gon and use the results from the previous section to show that the theta-rank of the $N$-gon is $\lceil N/4 \rceil$. We focus on the facet inequality of the regular $N$-gon introduced earlier:
\begin{equation}
\label{eq:l1} 
\ell(x,y) = \cos(\pi/N) - x \geq 0.
\end{equation}
Our main result in this section is:
\begin{thm}
\label{thm:theta-rank-Ngon}
The linear function $\ell(x,y)$ agrees with a sum-of-squares polynomial of degree $2 \lceil N/4 \rceil$ on the vertices of the $N$-gon, i.e., there exist polynomials $h_i \in \RR[x]$ with $\deg h_i \leq \lceil N/4 \rceil$ such that
\[ \cos(\pi/N) - x = \sum_{i} h_i(x)^2 \quad \forall x \in \{\cos(\theta_1),\dots,\cos(\theta_N) \}. \]
\end{thm}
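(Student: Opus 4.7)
Since $\ell(x,y) = \cos(\pi/N) - x$ depends only on $x$, the problem reduces to univariate interpolation. The vertices have exactly $k := \lceil N/2 \rceil$ distinct $x$-coordinates $a_0 > a_1 > \cdots > a_{k-1}$ with $a_i = \cos((2i+1)\pi/N)$ and $a_0 = \cos(\pi/N)$. Since every globally nonnegative univariate polynomial is a sum of at most two squares of polynomials of half its degree, it suffices to produce $p \in \RR[x]$ of degree at most $2\lceil N/4\rceil$, nonnegative on all of $\RR$, with $p(a_i) = \ell(a_i) = a_0 - a_i$ for each $i$. The linear function $\ell$ is decreasing with $\ell(a_0) = 0$, so the hypotheses of Proposition \ref{prop:interp2} are met, and the theorem reduces to showing that the sequence $(a_i)_{i=0}^{k-1}$ admits a nonnegative interpolant of degree $2\lceil N/4\rceil$.

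I would split into two cases by the parity of $k$. When $k$ is even (equivalently $N \equiv 0$ or $3 \pmod 4$), the target degree $2\lceil N/4\rceil$ equals $k$ and Proposition \ref{prop:interp2} applies directly, reducing the task to verifying that the monic polynomial $q(x) := \prod_{i=0}^{k-1}(x - a_i)$ lies above its tangent at $a_0$ globally. A trigonometric identification reveals the Chebyshev structure of $q$: for $N = 2m$ with $m$ even, $q = 2^{-(m-1)} T_m$; for $N = 2m+1$ with $m$ odd, $q(x) = (x+1) P(x)$ where $(x+1)P(x)^2 = (T_N(x)+1)/2^{N-1}$, a consequence of the factorization of $T_N + 1$ as the product of $(x+1)$ and the squares of its $m$ interior double zeros. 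When $k$ is odd, the target degree is $k+1$, and I would enlarge the sequence by a single auxiliary node $a_k$ placed sufficiently far to the left of $a_{k-1}$, apply Proposition \ref{prop:interp2} to the resulting sequence of even length $k+1$, and observe that the output interpolant automatically agrees with $\ell$ at the original $k$ nodes while having the correct degree $k+1 = 2\lceil N/4\rceil$.

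The decisive step is verifying the global tangent condition. Setting $x = \cos v$ and $\phi = \pi/N$ in the canonical case $N = 2m$ with $m$ even, the inequality $q(x) \geq q'(a_0)(x - a_0)$ on $[-1,1]$ becomes
\[
F(v) := m(\cos\phi - \cos v) + \sin\phi \cos(mv) \geq 0 \quad \text{for all } v \in [0,\pi],
\]
which I would establish by showing $F$ is decreasing on $[0,\phi]$ and increasing on $[\phi,\pi]$, so that the minimum value $F(\phi) = 0$ is achieved only at $v = \phi$. From $F'(v) = m(\sin v - \sin\phi\,\sin(mv))$, the monotonicity reduces to comparing $\sin v/\sin(mv)$ with $\sin\phi$: one checks that $\sin v/\sin(mv)$ is monotone increasing on $(0,\phi]$ with supremum $\sin\phi$ at $v=\phi$, and uses $\sin v \geq \sin\phi$ on $[\phi,\pi-\phi]$ together with $\sin(m(\pi-w)) = -\sin(mw)$ for $m$ even to handle $[\pi-\phi,\pi]$. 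The behavior for $x \notin [-1,1]$ is automatic because $q$ has even degree with positive leading coefficient and thus eventually dominates its linear tangent, while the symmetry $T_m(-x) = T_m(x)$ for $m$ even controls the left tail. The main obstacle is this monotonicity analysis of $F$, which uses the specific Chebyshev-node structure through the key identity $\sin(m\phi)=1$; the remaining cases ($N = 2m+1$ with $m$ odd, and the odd-$k$ extension) are handled by structurally similar but slightly more elaborate variants of this calculation.
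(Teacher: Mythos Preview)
Your overall strategy---reduce to the tangent criterion of Proposition~\ref{prop:interp2} and exploit the Chebyshev structure of the node polynomial $q$---is exactly the paper's. Two points deserve comment.

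First, your direct monotonicity analysis of $F(v)$ in the canonical case $N=2m$, $m$ even, is correct but more laborious than necessary. The paper proves the tangent condition (Lemma~\ref{lem:chebyshev-tangent}) for \emph{every} $T_n$ at \emph{every} point $u\geq\cos(\pi/n)$, using only that $T_n$ is convex on $[\cos(\pi/n),\infty)$ (the roots of $T_n''$ interlace those of $T_n'$) together with $T_n\geq -1$ on $[-1,1]$. This uniform statement, rather than a case-specific calculation, is what makes the remaining cases go through cleanly.

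Second, and this is a genuine gap, your handling of odd $k$ is incorrect as stated. Placing the auxiliary node $a_k$ ``sufficiently far to the left'' does \emph{not} force the tangent condition for $\tilde q(x)=(x-a_k)q(x)$; it destroys it. From the identity
\[
\tilde q(x)-\tilde q'(a_0)(x-a_0)=(x-a_k)\bigl[q(x)-q'(a_0)(x-a_0)\bigr]+q'(a_0)(x-a_0)^2,
\]
fix any $x_0$ at which the bracket is negative (such $x_0$ exist because $q$ has odd degree): as $a_k\to-\infty$ the first term tends to $-\infty$ while the second is fixed. Concretely, for $N=6$ one has $q(x)=x^3-\tfrac34 x$, and with $a_3=-100$ the left-hand side is already about $-6\cdot10^{6}$ at $x=-50$. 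The paper instead places the dummy node at $0$, so that $\tilde q_N=T_1\cdot q_N$. Since $q_N$ is itself proportional to a short sum of Chebyshev polynomials (Lemma~\ref{lem:qN-Cheb}), the product rule $T_1T_n=\tfrac12(T_{n+1}+T_{n-1})$ expresses $\tilde q_N$ as a positive combination of Chebyshev polynomials, and Lemma~\ref{lem:chebyshev-tangent} then applies to each summand at $u=\cos(\pi/N)$. This specific placement, not an asymptotic one, is the missing ingredient in your sketch.
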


\begin{proof}
The proof of this theorem relies mainly on Proposition \ref{prop:interp2}. We consider first the case where $N$ is a multiple of 4; the other cases are similar but slightly more technical and are treated in Appendix \ref{app:theta-rank}. Thus assume $N = 4m$ where $m$ is an integer. Define 
\[ a_i = \cos(\theta_{i+1}) = \cos\left(\frac{(2i+1)\pi}{4m}\right) \quad i=0,\dots,2m-1 \]
and note that $a_0 > a_1 > \dots > a_{2m-1}$.
Let $l$ be the univariate linear polynomial $l(x) = a_0 - x$.
 Consider the polynomial $q$ which vanishes at the $a_i$'s:
\begin{equation}
\label{eq:q_chebyshev}
q(x) = \prod_{i=0}^{2m-1} (x - a_i) = \prod_{i=0}^{2m-1} \left(x - \cos\left(\frac{(2i+1)\pi}{4m}\right)\right).
\end{equation}
Note that, up to scaling, the polynomial $q$ is nothing but the Chebyshev polynomial of order $2m$. Indeed recall that the Chebyshev polynomial of degree $r$ has roots $\cos((2i+1)\pi / 2r)$, $i=0,\dots,r-1$ and coincides with the function $\cos(r\arccos(x))$ on $x \in [-1,1]$. Using this observation it is not difficult to show, using the properties of Chebyshev polynomials, that $q$ satisfies the condition of Proposition \ref{prop:interp2}, namely that the curve of $q$ lies above its linear approximation at $x=a_0$ (cf. Figure \ref{fig:plot_q_chebyshev} for a picture ($N=8$) and Lemma \ref{lem:chebyshev-tangent} in Appendix \ref{app:theta-rank} for a formal proof).

\begin{figure}[ht]
  \centering
  \includegraphics[width=7cm]{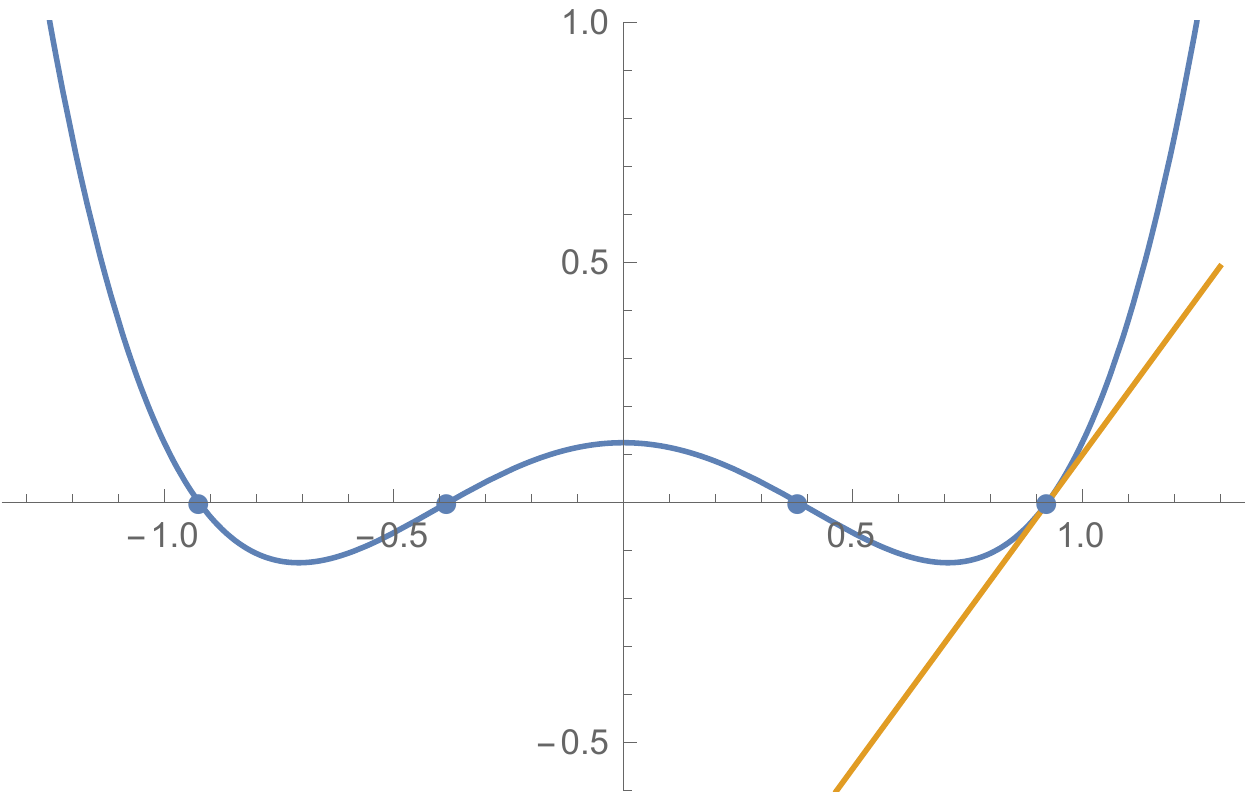}
\caption{Plot of the polynomial $q(x)$ of Equation \eqref{eq:q_chebyshev} for $2m = 4$ and its tangent at $x=a_0=\cos(\pi/8)$. We see that the tangent is always below the curve of $q$ (for a proof, cf. Lemma \ref{lem:chebyshev-tangent} in Appendix \ref{app:theta-rank}). Thus from Proposition \ref{prop:interp} there is a polynomial $p$ of degree 4 that is globally nonnegative and such that $p(a_i) = a_0 - a_i$ for all $i=0,\dots,2m-1$ where $a_i = \cos((2i+1)\pi/(4m))$.}
  \label{fig:plot_q_chebyshev}
\end{figure}

Thus from Proposition \ref{prop:interp2} it follows that there exists a nonnegative polynomial $p$ of degree $2m$ such that $p(a_i) = l(a_i) = a_0 - a_i$ for all $i=0,\dots,2m-1$.  Since nonnegative univariate polynomials are sum-of-squares we can write $p = \sum_{i} h_i^2$ where $h_i$ are polynomials of degree $\leq m$. Thus it follows that for any vertex $(x,y)$ of the regular $N$-gon, we can write:
\begin{equation}
 \label{eq:equalityell}
 \ell(x,y) = a_0 - x = l(x) \overset{(*)}{=} p(x) = \sum_{i} h_i(x)^2
\end{equation}
where in $(*)$ we used the fact that $x \in \{a_0\}_{i=0,\dots,2m-1}$ and that $p$ agrees with $l$ on the $a_i$'s. Thus this proves our claim in the case where $N$ is a multiple of four.

The proof when $N$ is not a multiple of four is slightly more technical for two reasons: the polynomial $q(x)$ is not necessarily a Chebyshev polynomial (though it is related), and the number of values that the facet $\ell(x,y)$ takes is not necessarily even. These cases are treated in detail in Appendix \ref{app:theta-rank}.
\end{proof}

\section{Construction}
\label{sec:construction}
In this section we construct two equivariant psd lifts of the 
regular $2^n$-gon. The first is a $(\S^3_+)^{n-1}$-lift, i.e., it expresses the regular $2^n$-gon using $n-1$ linear matrix inequalities of size $3\times 3$ each, whereas the second is a $\S^{2n-1}_+$-lift and uses a single linear matrix inequality of size $2n-1$. Both of our constructions 
arise from a sum of squares certificate of the non-negativity of 
$\ell(x,y) = \cos(\pi/2^{n})-x$ on the vertices of the regular $2^n$-gon 
(see Proposition~\ref{prop:facet1cert} to follow). Applying 
Theorem~\ref{thm:momentlift} in 
two different ways then gives the two different equivariant psd lifts 
of the regular $2^n$-gon.

We now establish the following sum of squares representation of the linear 
functional $\ell(x,y) = \cos(\pi/2^n)-x$ on the vertices of the regular
$2^n$-gon. Note that, in the space $\cF(N,\RR)$, this linear function can be expressed as $\ell = \cos(\pi/2^n) c_0 - c_1$.
\begin{prop}
    \label{prop:facet1cert}
    If $\ell = \cos(\pi/2^n)c_0 - c_1$ then, 
    in $\cF(2^n,\RR)$,
    \[ \ell = \sum_{k=0}^{n-2}
        \frac{\sin\left(\frac{\pi}{2^n}\right)}
        {2^k\sin\left(2^{k+1}\cdot\frac{\pi}{2^n}\right)}
        \left(\cos\left(2^k\cdot \frac{\pi}{2^n}\right)c_0 - c_{2^k}\right)^2.\]
\end{prop}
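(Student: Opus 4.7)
The plan is to prove the identity by telescoping. For $k = 0, 1, \dots, n-1$ set $f_k := \cos(2^k \pi/2^n)\,c_0 - c_{2^k}$, so the claim reads $f_0 = \sum_{k=0}^{n-2} \lambda_k f_k^2$, where $\lambda_k := \sin(\pi/2^n)/(2^k \sin(2^{k+1}\pi/2^n))$. The key structural observation is that although $f_k^2$ looks quadratic, it only involves the three ``frequencies'' $0$, $2^k$, $2^{k+1}$; this is exactly what allows a telescoping across the geometric sequence of frequencies $1, 2, 4, \dots, 2^{n-1}$.

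The core step is to show that each summand can be rewritten as a linear combination of \emph{neighbouring} $f$'s:
\[ \lambda_k f_k^2 = \mu_k f_k - \nu_k f_{k+1} \qquad (k = 0, 1, \dots, n-2) \]
for appropriate scalars $\mu_k, \nu_k$. To establish this I would expand $f_k^2$ using the product-to-sum relation $c_{2^k}^2 = (c_0 + c_{2^{k+1}})/2$, valid in $\cF(2^n, \RR)$ (with the convention $c_{2^{n-1}} = c_{N/2} = 0$). Matching coefficients of $c_{2^k}$ and $c_{2^{k+1}}$ on both sides forces $\mu_k = 2\lambda_k \cos(2^k\pi/2^n)$ and $\nu_k = \lambda_k/2$, after which the coefficient of $c_0$ collapses to an identity via the double-angle formula $\cos(2\alpha) = 2\cos^2(\alpha) - 1$.

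Summing the per-term identity over $k = 0, \dots, n-2$ yields
\[ \sum_{k=0}^{n-2} \lambda_k f_k^2 = \mu_0 f_0 + \sum_{k=1}^{n-2} (\mu_k - \nu_{k-1})\, f_k - \nu_{n-2} f_{n-1}. \]
Using $\sin(2\alpha) = 2\sin(\alpha)\cos(\alpha)$, I would check that $\mu_0 = 1$ and that $\mu_k = \nu_{k-1} = \sin(\pi/2^n)/(2^k \sin(2^k\pi/2^n))$ for $1 \le k \le n-2$, so the middle sum vanishes. Finally, $f_{n-1} = \cos(\pi/2) c_0 - c_{2^{n-1}} = 0$ in $\cF(2^n,\RR)$, so the last term vanishes as well, giving $\sum_{k=0}^{n-2} \lambda_k f_k^2 = f_0 = \ell$.

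The main obstacle is really only the bookkeeping: once the ansatz $\lambda_k f_k^2 = \mu_k f_k - \nu_k f_{k+1}$ is in hand, every identity reduces to the double-angle formulae for $\sin$ and $\cos$. Conceptually, the proof rests on the fact that squaring $c_{2^k}$ produces $c_{2^{k+1}}$ (plus a constant), which is precisely what links consecutive levels of the dyadic chain and makes the telescope close at the top, thanks to $c_{2^{n-1}} = 0$.
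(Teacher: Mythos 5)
Your proposal is correct: the per-term identity $\lambda_k f_k^2 = \mu_k f_k - \nu_k f_{k+1}$ does check out (the $c_0$-coefficients match via $\cos(2\alpha)=2\cos^2\alpha-1$, while $\mu_0=1$ and $\mu_k=\nu_{k-1}$ follow from $\sin(2\alpha)=2\sin\alpha\cos\alpha$), and the telescope closes because $f_{n-1}=-c_{2^{n-1}}=0$ in $\cF(2^n,\RR)$. This is essentially the paper's argument: your three-term relation is exactly the one-step identity \eqref{eq:double} rearranged in the quotient ring, and your telescoping sum replaces the paper's induction on $n$ (which carries the remainder $-\cos(2^{n-1}\theta)/2^{n-1}$ as an identity valid for all $\theta$ before passing to the vertices).
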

\begin{proof}
    To prove that $\ell$ has such a decomposition, it is sufficient to 
    establish that
    \begin{equation}
        \label{eq:sos}
        \frac{\cos\left(\frac{\pi}{2^n}\right) -
        \cos(\theta)}{\sin\left(\frac{\pi}{2^n}\right)}
        = \sum_{k=0}^{n-2}\frac{(\cos\left(2^k\cdot \frac{\pi}{2^{n}}\right)
    - \cos(2^k\theta))^2}{2^k\sin\left(2^{k+1}\cdot\frac{\pi}{2^n}\right)}
     - \frac{\cos(2^{n-1}\theta)}{2^{n-1}}\quad
     \text{for all $n\geq 1$ and all $\theta$}.
 \end{equation}
 This is enough to prove Proposition~\ref{prop:facet1cert}
 because $c_k$ is the restriction of $\cos(k\theta)$ to the angles
 $\theta_i = \frac{(2i-1)\pi}{2^n}$ for $i=1,2,\ldots,2^n$ corresponding 
 to the vertices of the regular $2^n$-gon, and $c_{2^{n-1}}=0$
 in $\cF(2^n,\RR)$. 
 
 We now establish the identity in~\eqref{eq:sos} by induction. 
 For the base case, observe that 
    $\frac{\cos(\pi/2)-\cos(\theta)}{\sin(\pi/2)} = -\cos(\theta)$
    which agrees with~\eqref{eq:sos} for $n=1$. 
    
    To take the induction step, we first prove the 
    following simple trigonometric identity that holds for all $N\geq 3$ and all $\theta$:
    \begin{equation}
        \label{eq:double}
        \frac{\cos\left(\frac{\pi}{N}\right) -
        \cos(\theta)}{\sin\left(\frac{\pi}{N}\right)} = 
        \frac{(\cos\left(\frac{\pi}{N}\right)-\cos(\theta))^2}
        {\sin\left(2\cdot\frac{\pi}{N}\right)} + 
        \frac{1}{2}\cdot\frac{\cos\left(2\cdot\frac{\pi}{N}\right) - \cos(2\theta)}{\sin\left(2\cdot\frac{\pi}{N}\right)}.
    \end{equation}
    To prove this identity, we start with the right-hand side, expand the square and use 
    the identity $\cos(2t) = 2\cos^2(t)-1$, then rewrite the denominator using $\sin(2t) =2\sin(t)\cos(t)$, i.e.,
    \begin{align*}
        \textup{RHS} = \frac{\left[\cos^2\left(\frac{\pi}{N}\right) - 
    2\cos\left(\frac{\pi}{N}\right)\cos(\theta)+\cos^2(\theta)\right] + 
\cos^2\left(\frac{\pi}{N}\right) -
\cos^2(\theta)}{\sin\left(2\cdot\frac{\pi}{N}\right)}
& = \frac{2\cos\left(\frac{\pi}{N}\right)(\cos\left(\frac{\pi}{N}\right) 
- \cos(\theta))}{\sin\left(2\cdot\frac{\pi}{N}\right)}\\
& = \frac{\cos\left(\frac{\pi}{N}\right) - \cos(\theta)}{\sin\left(\frac{\pi}{N}\right)}
\end{align*}
which is exactly the left-hand side.
   
    With~\eqref{eq:double} established, we return to our argument by induction. Assume that~\eqref{eq:sos} holds for some $n\geq 1$. 
    By first using~\eqref{eq:double} (with $N=2^{n+1}$), then applying the induction 
    hypothesis~\eqref{eq:sos} evaluated at $2\theta$ we have that:
\begin{align*}
    \frac{\cos\left(\frac{\pi}{2^{n+1}}\right) - \cos(\theta)}{\sin\left(\frac{\pi}{2^{n+1}}\right)} & = 
    \frac{(\cos\left(\frac{\pi}{2^{n+1}}\right) - \cos(\theta))^2}
    {\sin\left(\frac{\pi}{2^n}\right)} + 
    \frac{1}{2}\cdot \frac{\cos\left(\frac{\pi}{2^{n}}\right) - \cos(2\theta)}{\sin\left(\frac{\pi}{2^n}\right)}\\ 
    & = \frac{(\cos\left(2^0\cdot \frac{\pi}{2^{n+1}}\right) - \cos(2^0\cdot\theta))^2}
    {2^0\sin\left(2^{0+1}\cdot\frac{\pi}{2^{n+1}}\right)} + 
    \frac{1}{2}\left[\sum_{\ell=0}^{n-2}\frac{(\cos\left(2^{\ell}\cdot \frac{\pi}{2^{n}}\right)
        - \cos(2^\ell(2\theta)))^2}{2^\ell\sin\left(2^{\ell+1}\cdot\frac{\pi}{2^n}\right)}
    - \frac{\cos(2^{n-1}(2\theta))}{2^{n-1}}\right]\\
    & = \frac{(\cos\left(2^0\cdot \frac{\pi}{2^{n+1}}\right) - \cos(2^0\cdot\theta))^2}
    {2^0\sin\left(2^{0+1}\cdot\frac{\pi}{2^{n+1}}\right)} + 
    \left[\sum_{\ell=0}^{n-2}\frac{(\cos\left(2^{\ell+1}\cdot \frac{\pi}{2^{n+1}}\right)
        - \cos(2^{\ell+1}\theta))^2}{2^{\ell+1}\sin\left(2^{\ell+2}\cdot\frac{\pi}{2^{n+1}}\right)}
    - \frac{\cos(2^{n}\theta)}{2^{n}}\right]\\
     & = \sum_{k=0}^{n-1}\frac{(\cos\left(2^k\cdot 
 \frac{\pi}{2^{n+1}}\right)
    - \cos(2^k\theta))^2}{2^k\sin\left(2^{k+1}\cdot
    \frac{\pi}{2^{n+1}}\right)}
     - \frac{\cos(2^{n}\theta)}{2^{n}}
 \end{align*}
completing the proof.
 \end{proof}

In the context of Theorem~\ref{thm:momentlift} there are two natural ways to 
interpret the sum of squares decompostion of $\ell$ given in 
Proposotion~\ref{prop:facet1cert}. Both of these lead
to different equivariant lifts of the regular $2^n$-gon. In 
Sections~\ref{sec:prodlift} and~\ref{sec:smalllift} we describe these 
lifts.

\subsection{An equivariant $(\S_+^3)^{n-1}$-lift of the regular $2^{n}$-gon}
\label{sec:prodlift}
Let $V_i = \TPol_0(2^n)\oplus \TPol_{2^i}(2^n)$ for $i=0,2,\ldots,n-2$
and note that each $V_i$ has dimension $3$.
Then Proposition~\ref{prop:facet1cert} expresses 
$\ell = \cos\left(\frac{\pi}{2^n}\right)c_0 - c_1$ as
\[ \ell = \sum_{i=0}^{n-2} \sum_{j=1}^{1}h_{ij}^2\]
where each $h_{i1}\in V_i$. To apply Theorem~\ref{thm:momentlift} we 
need explicit expressions for the moment maps 
$\cM_{V_i}:\cF(2^n,\RR)^*\rightarrow \S^{3}$. 
For $i=0,1,\ldots,n-3$ we have
    \[ \cM_{V_i}:z\in \cF(2^n,\RR)^* \mapsto 
 \begin{bmatrix} z(c_0) & z(c_{2^i}) & z(s_{2^{i}})\\
            z(c_{2^i}) & (z(c_0)+z(c_{2^{i+1}}))/2 & z(s_{2^{i+1}})/2\\
            z(s_{2^i}) & z(s_{2^{i+1}})/2 & (z(c_0) - z(c_{2^{i+1}}))/2\end{bmatrix}.\]
    In the case of $V_{n-2}$, because 
    $c_{2^{n-2}}^2 = (c_0 - c_{2^{n-1}})/2=c_0/2$ in $\cF(2^n,\RR)$ 
    we have that
    \[ \cM_{V_{n-2}}:z\in \cF(2^n,\RR)^* \mapsto  \begin{bmatrix} z(c_0) & z(c_{2^{n-2}}) & z(s_{2^{n-2}})\\
            z(c_{2^{n-2}}) & z(c_0)/2 & z(s_{2^{n-1}})/2\\
            z(s_{2^{n-2}}) & z(s_{2^{n-1}})/2 & z(c_0)/2\end{bmatrix}.\]
    From these explicit expressions for the moment maps and 
    Theorem~\ref{thm:momentlift} we can obtain the equivariant lift
    of the regular $2^n$-gon in Theorem~\ref{thm:equivariantliftintro}, i.e.\ 
\begin{multline}
    \conv(\Gon_{2^n}) = \Biggl\{(x_0,y_0): \exists (x_i,y_i)_{i=1}^{n-2},y_{n-1},\quad 
        \begin{bmatrix} 1 & x_{k-1} & y_{k-1}\\x_{k-1} & \frac{1+x_k}{2} & \frac{y_k}{2}\\y_{k-1} &\frac{y_k}{2} & \frac{1-x_{k}}{2}\end{bmatrix} \psd 0
\quad
\text{for $k=1,2,\ldots,n-2$}\\
\text{and}\quad\begin{bmatrix} 1 & x_{n-2} & y_{n-2}\\x_{n-2} & \frac{1}{2} & \frac{y_{n-1}}{2}\\y_{n-2} & \frac{y_{n-1}}{2} & \frac{1}{2}\end{bmatrix} \psd 0.\Biggr\}
 \end{multline}
    Note that the variables $x_k$ and $y_k$ above correspond to the 
    variables $z(c_{2^k})$ and $z(s_{2^k})$ in Theorem~\ref{thm:momentlift}.

\subsection{An equivariant $\S_+^{2n-1}$-lift of the regular $2^n$-gon}
\label{sec:smalllift}

Let 
\[
V = \TPol_0(2^n)\oplus \TPol_{2^0}(2^n) \oplus \TPol_{2^1}(2^n)\oplus \cdots \oplus \TPol_{2^{n-2}}(2^n)
\]
 and note that $\dim(V) = 2n-1$. Then 
Proposition~\ref{prop:facet1cert} expresses $\ell = \cos\left(\frac{\pi}{2^n}\right)c_0 - c_1$ as
\[ \ell = \sum_{i=1}^{1}\sum_{j=0}^{n-2} h_{ij}^2\]
where $h_{1j}\in V$ for all $j$. Thus Theorem~\ref{thm:momentlift} shows that the regular $2^n$-gon admits the following equivariant psd lift of size $2n-1$:
\begin{equation}
\conv(\Gon_{2^n}) = \Biggl\{(z(c_1),z(s_1)) : z \in \cF(2^n,\RR)^*, z(c_0) = 1 \text{ and } \cM_V(z) \succeq 0 \Biggr\}
\end{equation}
where $\cM_{V}:\cF(2^n,\RR^n)^*\rightarrow \S^{2n-1}$ is the moment map for $V$.
One can compute the moment map of $V$ using trigonometric identities though it may be complicated to write explicitly for large values of $n$.
For illustration we computed the moment map of $V$ for the case $n = 4$ (the 16-gon) and we get that the regular 16-gon is the set of $(u_1,v_1)$ for which the following $7\times 7$ matrix is positive semidefinite:
\[
    \begin{bmatrix}
    2      &  2u_{1}     & 2v_{1}        &2u_{2}         & 2v_{2}         &  2u_{4}       & 2v_{4}       \\
    2u_{1} &  1+u_{2}    &  v_{2}        &  u_{1}+u_{3}  &  v_{1}+v_{3}   &  u_{3}+u_{5}  &   v_{3}+v_{5}\\
    2v_{1} &  v_{2}      &  1-u_{2}      &  -v_{1}+v_{3} &  u_{1}-u_{3}   &  -v_{3}+v_{5} &   u_{3}-u_{5}\\
    2u_{2} &  u_{1}+u_{3}&   -v_{1}+v_{3}&    1+u_{4}    &   v_{4}        &  u_{2}+u_{6}  &   v_{2}+v_{6}\\
    2v_{2} &  v_{1}+v_{3}&   u_{1}-u_{3} &    v_{4}      &   1-u_{4}      &  -v_{2}+v_{6} &   u_{2}-u_{6}\\
    2u_{4} &  u_{3}+u_{5}&   -v_{3}+v_{5}&    u_{2}+u_{6}&    -v_{2}+v_{6}&    1          &   v_{8}      \\
    2v_{4} &  v_{3}+v_{5}&   u_{3}-u_{5} &    v_{2}+v_{6}&    u_{2}-u_{6} &    v_{8}      &    1          
    \end{bmatrix}.
\]

\section{Lower bound on equivariant psd lifts of regular polygons}
\label{sec:lb}

In this section we are interested in obtaining lower bounds on equivariant psd lifts of the regular $N$-gon.  For convenience we will consider \emph{Hermitian} psd lifts, which are psd lifts defined with the cone of positive semidefinite Hermitian matrices, denoted $\HH^d_+$, instead of the cone $\S^d_+$ of psd real symmetric matrices. Clearly any lower bound for psd lifts over $\HH^d_+$ is also a lower bound for psd lifts over $\S^d_+$. The definition of equivariance for Hermitian psd lifts is the same as Definition \ref{def:equivariance} except that the cone $\S^d_+$ is replaced by $\HH^d_+$ and transposes are replaced by Hermitian conjugates (the Hermitian conjugate of a matrix $A$ is denoted $A^*$).

Let $\Rot_N$ be the subgroup of rotations of the dihedral group for the $N$-gon. Note that $\Rot_N \cong \ZZ_N$.
The main result of this section is the following:

\begin{thm}
\label{thm:lbhermitian}
Any Hermitian psd lift of the regular $N$-gon that is equivariant with respect to $\Rot_N$ has size at least $\ln(N/2)$.
\end{thm}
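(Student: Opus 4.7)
The overall plan is to reduce Theorem~\ref{thm:lbhermitian} to a question about Hermitian sum-of-squares certificates for $\ell = \cos(\pi/N) - x$, and then to prove a sparsity lower bound on such certificates via a clustering argument, mirroring the real-case strategy outlined in the introduction (Theorems~\ref{thm:structureR} and~\ref{thm:lbsosvalid}).

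First I would establish the Hermitian analogue of the structure theorem: an $\Rot_N$-equivariant Hermitian psd lift of the regular $N$-gon of size $d$ yields a decomposition
\[ \ell = \sum_i |h_i|^2 \quad \text{in } \cF(N,\CC), \qquad h_i \in \bigoplus_{k \in K} W_k, \]
for some $K \subseteq \ZZ_N$ with $|K|\leq d$, where $W_k$ is the one-dimensional isotypic component of the $\ZZ_N \cong \Rot_N$-action on $\cF(N,\CC)$ spanned by the character $\chi_k$. This is the complex counterpart of the real case in \cite{fawzi2014equivariant}; the key saving is that the complex irreducibles of $\ZZ_N$ are one-dimensional, so $|K|$ (and not $2|K|$) matches the lift size. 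Writing $h_i = \sum_{k \in K} a_{ik}\chi_k$ and using $\chi_k\,\overline{\chi_{k'}} = \chi_{k-k'}$ on the $N$-gon vertices, the certificate is equivalent to the existence of a Hermitian psd matrix $Q \in \HH^K_+$ satisfying, for every $m \in \ZZ_N$,
\[ \sum_{k,k' \in K :\, k - k' \equiv m \pmod{N}} Q_{kk'} \;=\; \hat{\ell}(m), \]
where $\hat{\ell}$ is supported on $\{0,\pm 1\}$ with $\hat{\ell}(0) = \cos(\pi/N)$ and $|\hat{\ell}(\pm 1)| = 1/2$, the values coming from expanding $x$ in the character basis.

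The heart of the argument is to introduce a notion of \emph{clustering} of $K$: a partition $K = K_1 \sqcup \cdots \sqcup K_p$ with geometric properties in $\ZZ_N$ ensuring that every difference $k - k' \equiv \pm 1 \pmod{N}$ of elements of $K$ lies within a single cluster, and that the cross-cluster shifts occur only at $m$'s where $\hat\ell(m) = 0$ in a way that, combined with the psd constraint, forces $Q$ to be block-diagonal along the clustering. Once $Q$ is block-diagonal, the trace $\cos(\pi/N)$ and the $\pm 1$-mass $1/2$ are split across the blocks; a Cauchy--Schwarz bound $|Q_{k,k-1}|^2 \leq Q_{kk}Q_{k-1,k-1}$ applied inside each block, together with the smallness (in diameter) of the clusters, yields a contradiction. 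Equivalently one may produce a separating functional $\lambda:\cF(N,\CC)\to\CC$ with $\lambda(\ell)<0$ but $\lambda(|h|^2)\geq 0$ for all admissible $h$, constructed from the clustering.

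Finally I would design a recursive clustering algorithm that succeeds on every $K \subseteq \ZZ_N$ with $|K| < \ln(N/2)$. The idea is dyadic: at each stage, either $K$ already fits within a short arc and the inner impossibility argument applies directly, or $K$ admits a large gap at some scale $2^j$ across which it can be split into two subsets forming valid super-clusters; iterating, the recursion depth is logarithmic in $N$, and the threshold at which such a split must always exist produces the precise constant $\ln(N/2)$. The principal obstacle will be in formulating the clustering condition so that it is simultaneously \emph{strong enough} to force block-diagonalisation of $Q$ (the vanishing of $\hat\ell(m)$ alone does not make the individual shift-$m$ entries of $Q$ vanish; a more global psd argument is required) and \emph{weak enough} to be achievable by the dyadic algorithm for every small $K$. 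Obtaining the sharp constant $\ln(N/2)$, rather than a loose $\Omega(\log N)$, will likely dictate both the precise clustering definition and the splitting rule.
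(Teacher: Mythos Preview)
Your three-step outline---Hermitian structure theorem, clustering obstruction, clustering algorithm for small $K$---matches the paper exactly, and your first step is essentially Theorem~\ref{thm:structure}. The gaps are in the two technical steps, and in both places your proposed route differs from the paper's in ways that matter.

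For the obstruction, the paper's clustering condition (Theorem~\ref{thm:clustering}) is that a \emph{single} parameter $1\leq\gamma<N/2$ bounds the in-diameter of every cluster from above \emph{and} the distance between distinct clusters from below. With that condition, the paper never tries to make $Q$ block-diagonal; it writes the separating functional explicitly, namely $\cL(e_k)=e^{-i\pi(k\bmod N)/N}$ for $d(0,k)\le\gamma$ and $\cL(e_k)=0$ otherwise---a truncated evaluation at the point $(1,0)$ just outside the $N$-gon. The diameter bound makes $\cL(|h|^2)$ a perfect square on each cluster, and the separation bound kills the cross terms. Your block-diagonalisation idea is a detour: as you already note, $\hat\ell(m)=0$ for the cross-cluster shifts does not force the corresponding entries of $Q$ to vanish, and there is no clean psd argument that saves this. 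The weaker condition you state (adjacent elements of $K$ share a cluster) is not enough to build the functional.

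For the algorithm, the paper is not dyadic. It greedily merges the two closest clusters; after $i$ merges, each cluster has in-diameter $\le S_i:=d_1+\cdots+d_i$ (where $d_1\le d_2\le\cdots$ are the sorted pairwise distances in $K$) and any two clusters are at distance $\ge d_{i+1}$. If no step gives a valid $(\gamma=S_i)$-clustering, then $d_{i+1}\le S_i$ throughout the relevant range, and the bound follows from the comparison
\[
|K|-1 \;\ge\; \sum_i \frac{d_{i+1}}{S_i} \;\ge\; \int_{S_a}^{S_{b+1}} \frac{dx}{x} \;\ge\; \ln(N/2)-\ln a + a - 1 \;\ge\; \ln(N/2).
\]
A genuinely dyadic recursion, splitting on gaps at scales $2^j$, would naturally produce a base-$2$ logarithm; your claim that it yields ``the precise constant $\ln(N/2)$'' is therefore suspect, and the sharp constant really comes from the $\int dx/x$ step above.
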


The proof of this theorem first relies on the structure theorem from \cite{fawzi2014equivariant} which says that any equivariant psd lift of size $d$ of the regular $N$-gon gives a sum-of-squares certificate of facet inequalities using trigonometric polynomials that are \emph{$d$-sparse} (we recall the precise statement of this as Theorem \ref{thm:structure}, to follow). The main part of this section is then dedicated to showing that any such sum-of-squares certificate requires $d$ to be at least $\ln( N/2 )$.

We now introduce some notations which will be used throughout the section.

\paragraph{Notations and terminology} We denote by $\cF(N,\CC)$ the space of complex-valued functions on the vertices of the $N$-gon.
For $k \in \ZZ$ consider the element $e_k \in \cF(N,\CC)$ defined by (as before, we identify functions on the vertices of the regular $N$-gon as functions on the set of angles $\{\theta_1,\dots,\theta_N\}$) \footnote{The constant $e^{-ik\pi/N}$ in the definition of $e_k$ makes calculations more convenient. For example with this definition we have $e_{k+N} = e_k$, whereas otherwise there is a minus sign: $e^{i(k+N)\theta} = -e^{ik\theta}$ for $\theta \in \{\theta_1,\dots,\theta_N\}$ since the $\theta_i$ are odd multiples of $\pi/N$.}:
\[ e_k(\theta) = e^{-ik\pi/N} e^{ik\theta} \quad \forall \theta \in \{\theta_1,\dots,\theta_N\}. \]
Note that for any $k \in \ZZ$ we have $e_{k+N} = e_k$, thus the element $e_k$ only depends on the residue class of $k$ modulo $N$. It is thus natural to index the elements $e_k$ with $k \in \ZZ_N$ instead of $k \in \ZZ$. Also note that we have $e_k^* = e_{-k}$ where $^*$ denotes complex conjugation, and for $k,k' \in \Freq{N}$ we have $e_k e_{k'} = e_{k+k'}$ where $e_k e_{k'}$ denotes pointwise multiplication of the functions $e_k$ and $e_{k'}$. 


The space $\cF(N,\CC)$ decomposes into a direct sum of one-dimensional spaces spanned by the $e_k$'s:
\[ \cF(N,\CC) = \bigoplus_{k \in \Freq{N}} \CC e_k. \]
Note that each $\CC e_k$ is an invariant subspace of $\cF(N,\CC)$ under the action of $\Rot_N$. If $h \in \cF(N,\CC)$, then the decomposition of $h$ in the basis $(e_k)_{k \in \Freq{N}}$ corresponds to the discrete Fourier transform of $h$. For this reason, we will often refer to the index $k$ in $e_k$ as a ``frequency''.

The following definition will be useful later:
\begin{defn}
Given $h \in \cF(N,\CC)$ and $K \subseteq \Freq{N}$, we say that $h$ is \emph{supported on} $K$ and we write $\supp h \subseteq K$ if $h$ is a linear combination of the elements $\{e_k : k \in K\}$.
\end{defn}

Recall the facet inequality of the regular $N$-gon given by $\cos(\pi/N) - x \geq 0$. The facet linear functional $\cos(\pi/N) - x$ can be expressed in the Fourier basis $(e_k)_{k \in \Freq{N}}$ as:
\begin{equation}
 \label{eq:elldef}
 \ell = \cos(\pi/N) e_0 - \frac{1}{2}(e^{i\pi/N} e_1 + e^{-i\pi/N} e_{-1}).
\end{equation}
We are interested in certificates of nonnegativity of $\ell$ using sums of hermitian squares of the form:
\begin{equation}
 \label{eq:ellsos}
 \ell = \sum_{i} |h_i|^2
\end{equation}
where $h_i \in \cF(N,\CC)$. More precisely we are interested in certificates where the functions $h_i$ are supported on a ``small'' set $K \subseteq \Freq{N}$. 
For convenience, we introduce the following definition of an \emph{sos-valid} set $K$:
\begin{defn}
A set $K\subseteq \Freq{N}$ is called \emph{sos-valid} if \eqref{eq:ellsos} holds where $\supp h_i \subseteq K$ for all $i$.
\end{defn}

\subsection{Structure theorem for regular $N$-gons}

In \cite{fawzi2014equivariant} we studied certain class of polytopes known as \emph{regular orbitopes} and we established a connection between equivariant psd lifts for such polytopes and sum-of-squares certificates of facet-defining inequalities.  The regular $N$-gon can be shown to be a $\Rot_N$-regular orbitope and so one can apply the results from \cite{fawzi2014equivariant} to characterize equivariant psd lifts of the regular $N$-gon. 
We summarize this characterization in the following theorem and we include a proof for completeness:

\begin{thm}
\label{thm:structure}
Assume that the regular $N$-gon has a Hermitian psd lift of size $d$ that is equivariant with respect to $\Rot_N$. Then there exists a set $K \subseteq \Freq{N}$ with $|K|\leq d$ that is sos-valid, i.e., there exist functions $h_i \in \cF(N,\CC)$ with $\supp h_i \subseteq K$ such that:
\[ \ell = \sum_{i} |h_i|^2 \]
where $\ell$ is the facet-defining linear functional of the regular $N$-gon defined in \eqref{eq:elldef}.
\end{thm}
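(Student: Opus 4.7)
The plan is to extract a sum-of-squares certificate for $\ell$ directly from the equivariant lift, by combining the representation theory of the cyclic group $\Rot_N\cong\ZZ_N$ with the standard trace-duality argument underlying the correspondence between psd lifts and psd factorizations of slack matrices. Since $\Rot_N$ is finite, Weyl averaging yields a $\rho$-invariant Hermitian inner product on $\CC^d$; the corresponding change of basis is a cone automorphism of $\HH^d_+$, so I may assume $\rho$ is unitary. Because $\Rot_N$ is cyclic, $\rho$ then decomposes into one-dimensional character subspaces: there exist an orthonormal basis $(e_j)_{j=1}^d$ of $\CC^d$ and frequencies $k_1,\dots,k_d\in\Freq{N}$ with $\rho(g)e_j=\chi_{k_j}(g)e_j$. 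Setting $K=\{k_1,\dots,k_d\}$ gives a subset of $\Freq{N}$ of size at most $d$; this is the candidate support set.

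Next, I would extract slack-factorization data from the lift. The map $Y\mapsto\ell(\pi(Y))$ is nonnegative on $\HH^d_+\cap L$, so the standard psd-lift/psd-factorization duality \cite{gouveia2011lifts} produces $A\in\HH^d_+$ with $\Tr(AY)=\ell(\pi(Y))$ for all $Y\in L$. Pick any $B_1\in\HH^d_+\cap L$ with $\pi(B_1)=v_1$; if $g_i\in\Rot_N$ denotes the rotation sending $v_1$ to $v_i$, conditions (i) and (ii) of equivariance imply $B_i:=\rho(g_i)B_1\rho(g_i)^*\in\HH^d_+\cap L$ with $\pi(B_i)=v_i$. Therefore
\[ \ell(v_i) \;=\; \Tr\!\bigl(A\,\rho(g_i)\,B_1\,\rho(g_i)^*\bigr)\qquad\text{for every } i=1,\dots,N. \]

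In the final step I would convert this trace identity into the desired sum of squares. Spectral decompositions $A=\sum_\alpha a_\alpha a_\alpha^*$ and $B_1=\sum_\beta b_\beta b_\beta^*$ (with nonnegative eigenvalues absorbed into the vectors), together with cyclicity of the trace, give
\[ \ell(v_i) \;=\; \sum_{\alpha,\beta}\bigl|a_\alpha^*\,\rho(g_i)\,b_\beta\bigr|^2. \]
In the basis $(e_j)$, $a_\alpha^*\rho(g_i)b_\beta=\sum_j\overline{a_{\alpha,j}}\,b_{\beta,j}\,\chi_{k_j}(g_i)$; a direct calculation from $\theta_i=(2i-1)\pi/N$ and the definition of $e_k$ shows $\chi_{k}(g_i)=e_{k}(\theta_i)$ for every $k\in\Freq{N}$ (the phase $e^{-ik\pi/N}$ built into $e_k$ was chosen precisely to make this match work). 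Hence each function $h_{\alpha,\beta}\in\cF(N,\CC)$ defined by $h_{\alpha,\beta}(\theta_i):=a_\alpha^*\rho(g_i)b_\beta$ lies in $\linspan\{e_k:k\in K\}$, and relabelling the pairs $(\alpha,\beta)$ by a single index gives $\ell=\sum_t|h_t|^2$ with $\supp h_t\subseteq K$ and $|K|\leq d$, as required.

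The main technical care lies in the duality step that produces $A$: strictly, $A$ is determined only modulo $L^\perp$, but this ambiguity is harmless because $\rho(g_i)B_1\rho(g_i)^*\in L$, making the trace expression unambiguous. Beyond that, the argument is essentially bookkeeping: once the representation-theoretic decomposition of $\CC^d$ is in hand, the matrix elements $a_\alpha^*\rho(g_i)b_\beta$ automatically live in the correct Fourier subspace, and the sum over rank-one pieces of $A$ and $B_1$ assembles into the sought certificate.
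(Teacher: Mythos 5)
Your proposal is correct and follows essentially the same route as the paper: diagonalize the cyclic representation $\rho$ into one-dimensional characters to define $K$, obtain a psd factorization of the facet slack via the (equivariant) factorization theorem, and observe that the resulting rank-one expansion produces functions that are linear combinations of the characters $\chi_{k_j}$, hence supported on $K$. The only cosmetic differences are that you expand $\Tr(A\rho(g_i)B_1\rho(g_i)^*)$ via spectral decompositions of both $A$ and $B_1$ where the paper packages the same computation as $t(r)^*(A'\hadprod B')t(r)$ plus the Schur product theorem, and that you rebuild the equivariant family of vertex lifts $B_i=\rho(g_i)B_1\rho(g_i)^*$ directly from Definition \ref{def:equivariance} rather than reading it off from the cited factorization theorem.
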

\begin{proof}
Let $\Theta_N := \{\theta_1,\dots,\theta_N\}$ where $\theta_i = (2i-1)\pi/N$ be the angles of the vertices of the regular $N$-gon. 
We use the \emph{factorization theorem} for equivariant psd lifts (cf. \cite[Theorem A]{fawzi2014equivariant}), which is the analogue of Yannakakis' theorem \cite{yannakakis1991expressing} for equivariant psd lifts. Since we have a Hermitian psd lift of the regular $N$-gon of size $d$, the factorization theorem says that there exists a map $A:\Theta_N \rightarrow \HH^d_+$ and $B \in \HH^d_+$ such that:
\[ \ell(\theta) = \langle A(\theta), B \rangle \quad \forall \theta \in \Theta_N. \]
Note that we identified vertices of the regular $N$-gon with the set $\Theta_N$.
Furthermore, since the lift is equivariant, the map $A$ satisfies the following equivariance relation:
\[ A(r\cdot \theta) = \rho(r) A(\theta) \rho(r)^* \quad \forall r \in \Rot_N, \; \forall \theta \in \Theta_N \]
where $\rho : \Rot_N \rightarrow GL_d(\CC)$ is a group homomorphism and where $r\cdot \theta$ denotes the natural action of $\Rot_N$ on $\Theta_N$.
Since $\Rot_N \cong \ZZ_N$, we will identify $\Rot_N$ with $\ZZ_N$ in the rest of the proof. 
Note that $\rho$ is nothing but a $d$-dimensional linear representation of $\ZZ_N$. Since the irreducible representations of $\ZZ_N$ are all one-dimensional, there is a change-of-basis matrix $T \in GL_d(\CC)$ so that $\rho(r)$ is diagonal, i.e., we can write:
\[ \rho(r) = T \diag(t(r)) T^{-1} \quad \forall r \in \ZZ_N, \]
where $t=(t_1,\dots,t_d):\ZZ_N\rightarrow (\CC^*)^{d}$. Note that for each $j=1,\dots,d$, the map $t_j:\ZZ_N \rightarrow \CC^*$ is a group homomorphism and thus takes the form \begin{equation}
\label{eq:tj}
 t_j(r) = e^{2ik_j r \pi/N} \quad \forall r \in \ZZ_N
\end{equation}
where $k_j \in \Freq{N}$. Let
\begin{equation} K = \{k_1,\dots,k_d\} \subseteq \Freq{N}
\end{equation} and note that $|K| \leq d$. We will now show that $K$ is sos-valid, i.e., that $\ell$ has a sum-of-squares representation using functions supported on $K$.

Let $\theta_1 := \pi/N$ and observe that, by the equivariance relation on $A$, we have: $A(r\cdot \theta_1) = \rho(r) A(\theta_1) \rho(r)^*$ for any $r \in \ZZ_N$. Thus we have, for any $r \in \ZZ_N$:
\[ 
\begin{aligned}
\ell(r\cdot \theta_1 ) &= \Tr[\rho(r) A(\theta_1) \rho(r)^* B^*] \\
&= \Tr[T \diag(t(r)) T^{-1} A(\theta_1) T^{-*} \diag(t(r))^* T^* B^*] \\
&\overset{(a)}{=} \Tr[\diag(t(r)) A' \diag(t(r))^* B'^*] \\
&\overset{(b)}{=} t(r)^* (A' \hadprod B') t(r)
\end{aligned}
 \]
where in $(a)$ we used $A' = T^{-1} A(\theta_1) T^{-*}$ and $B' = T^* B T$ and in $(b)$ we denoted by $A' \hadprod B'$ the Hadamard (componentwise) product of $A'$ and $B'$. Since $A',B'$ are positive semidefinite, $A'\hadprod B'$ is positive semidefinite too (by the Schur product theorem) and thus we can write
\[ A'\hadprod B' = \sum_{i} v_i v_i^* \]
where $v_i \in \CC^d$. Thus we finally get that:
\begin{equation}
 \label{eq:sosell1}
 \ell(r\cdot \theta_1) = \sum_{i} |v_i^* t(r)|^2 = \sum_{i} |h_i(r)|^2 \quad \forall r \in \ZZ_N
\end{equation}
where $h_i := v_i^* t:\ZZ_N\rightarrow \CC$ are linear combinations of the $t_j$'s given in \eqref{eq:tj}. Since $r \cdot \theta_1$ ranges over the set $\Theta_N$ as $r$ ranges over $\ZZ_N$, Equation \eqref{eq:sosell1} can be rewritten as:
\begin{equation}
\ell(\theta) = \sum_{i} |\hat{h_i}(\theta)|^2 \quad \forall \theta \in \Theta_N
\end{equation}
where for $\theta \in \Theta_N$ we let $\hat{h_i}(\theta) = h_i(r)$ with $r$ being the unique element in $\ZZ_N$ such that $r\cdot \theta_1 = \theta$. Since the $h_i$ are linear combinations of the pure frequencies $t_j$ given in \eqref{eq:tj}, it is easy to see that the functions $\hat{h_i} \in \cF(N,\CC)$ are supported on $K$. Thus this completes the proof.
\end{proof}

Theorem \ref{thm:structure} thus reduces the problem of studying equivariant psd lifts of the regular $N$-gon to the problem of studying sets $K \subseteq \Freq{N}$ that are \emph{sos-valid}. The remaining part of this section is thus devoted to the study of such sets, and in particular to obtaining a lower bound on the size of sos-valid sets.

\subsection{Necessary conditions for a set to be sos-valid}


In this section we give a necessary condition on the ``geometry'' of a set $K$ to be sos-valid. Before stating the theorem, we make some observations and definitions:

First, observe that if $K$ is a set that is sos-valid, then any \emph{translation} $K'=K+t$ of $K$ is also sos-valid, where $t \in \ZZ_N$. This is because if $\ell = \sum_{i} |h_i|^2$ where $\supp h_i \subseteq K$, then we have $\ell = \sum_i |h'_i|^2$ where $h'_i = e_t h_i$ are supported on $K'$.

Second, it is useful to think of $\Freq{N}$ as the nodes of a cycle graph of length $N$, and of a set of frequencies $K \subseteq \Freq{N}$ as a subset of the nodes of this graph. For example Figure \ref{fig:freqcyclegraph} shows a set $K$ with $|K| = 7$ for the $N=12$-gon (the elements of $K$ are the black dots).  Note that since the property of being sos-valid is invariant under translation, the cycle graph need not be labeled. The only information that matters are the relative distances of the elements of $K$ with respect to each other.
\newdimen\R
\R=1.5cm
\newdimen\Rs
\Rs=0.1cm
\def\Nnodes{12}
\def\KK{0,1,3,4,6,7,8}
\begin{figure}[ht]
\centering
\begin{tikzpicture}
    \draw[black] circle (\R);
    \foreach \i in {1,...,\Nnodes} {
      \filldraw[fill=white,draw=black] (90-\i*360/\Nnodes:\R) circle (\Rs);
    }
    \foreach \i in \KK {
      \fill[black] (90-\i*360/\Nnodes:\R) circle (\Rs);
    }
\end{tikzpicture}
\caption{A set of frequencies $K$ for the regular $12$-gon.}
\label{fig:freqcyclegraph}
\end{figure}
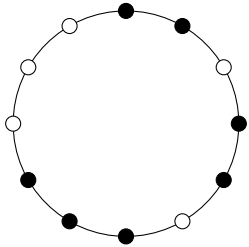

We endow $\Freq{N}$  with the natural distance $d$ on the cycle graph. The distance between two frequencies $k,k' \in \Freq{N}$ is denoted by $d(k,k')$; also if $C,C'$ are two subsets of $\Freq{N}$ we let
\[ d(C,C') = \min_{k \in C, k' \in C'} d(k,k'). \]
If $x \in \Freq{N}$ and $r$ is a positive integer, we can define the ball $B(x,r)$ centered at $x$ and with radius $r$ to be the set $B(x,r) := \{y \in \Freq{N}:d(x,y) \leq r\}$. We also let $[x,x+r]$ be the interval $\{x,x+1,\dots,x+r\} \subseteq \Freq{N}$. Note that the ball centered at $x$ of radius $r$ is simply the interval $[x-r,x+r]$.

In Section \ref{sec:soshierarchy}, Proposition \ref{prop:lbthetarank} we showed that the linear functional $\ell$ of the regular $N$-gon does not admit any sum-of-squares certificate with polynomials of degree smaller than $N/4$. One can state this result in a different way as follows: If $K$ is a set of frequencies that is included in a ball of radius smaller than $N/4$, then $K$ is not sos-valid.
The goal of this section is to extend this result and give a more general necessary condition for a set $K$ to be sos-valid in terms of its geometry. 

To state the main theorem, it will be more convenient to work with diameters instead of radii of balls (mainly to avoid the issue of dividing by two). We introduce the notion of \emph{in-diameter} of a set $K$ which is essentially twice the radius of the smallest ball containing $K$. More formally we have:

\begin{defn}
Let $K \subseteq \Freq{N}$. We define the \emph{in-diameter} of $K$, denoted $\indiam(K)$ to be the smallest positive integer $r$ such that $K$ is included in an interval of the form $[x,x+r]$ where $x \in \Freq{N}$.
\end{defn}

\begin{rem}
Note that the in-diameter of a set $K$ is in general different from the usual notion of \emph{diameter} (largest distance between two elements in $K$). 
Note for example that $\indiam(\Freq{N}) = N$ whereas the diameter of $\Freq{N}$ is equal to $\lfloor N/2 \rfloor$. 
\end{rem}

We are now ready to state the main result of this section:
%
%
\begin{thm}
\label{thm:clustering}
Let $N$ be an integer and let $K \subseteq \Freq{N}$ be a set of frequencies. Assume that $K$ can be decomposed into disjoint clusters $(C_{\alpha})_{\alpha \in A}$:
\[ K = \bigcup_{\alpha \in A} C_{\alpha}, \]
such that the following holds for some $1 \leq \gamma < N/2$:
\begin{enumerate}
\item[(i)] For any $\alpha \in A$, $C_{\alpha}$ has in-diameter $\leq \gamma$.
\item[(ii)] For any $\alpha \neq \alpha'$, $d(C_{\alpha},C_{\alpha'}) > \gamma$.
\end{enumerate}
Then the set $K$ is \emph{not} sos-valid (i.e., it is \emph{not} possible to write the linear function $\ell$ as a sum of squares of functions supported on $K$).
\end{thm}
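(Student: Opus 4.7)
My plan is to exploit Fourier orthogonality afforded by the clustering of $K$ to reduce any hypothetical sum-of-squares certificate to one whose terms are "low frequency", and then to contradict this via a zero-counting argument in the spirit of Proposition~\ref{prop:lbthetarank}.

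Suppose for contradiction that $\ell = \sum_i |h_i|^2$ in $\cF(N,\CC)$ with $\supp h_i \subseteq K$. Split each $h_i = \sum_{\alpha \in A} h_i^{(\alpha)}$ where $\supp h_i^{(\alpha)} \subseteq C_\alpha$, and expand
\[ |h_i|^2 = \sum_{\alpha \in A} |h_i^{(\alpha)}|^2 + \sum_{\alpha \neq \beta} h_i^{(\alpha)} \overline{h_i^{(\beta)}}. \]
By hypothesis (i), each diagonal term $|h_i^{(\alpha)}|^2$ has Fourier support in $C_\alpha - C_\alpha \subseteq W := \{m \in \Freq{N} : d(m,0) \leq \gamma\}$; by (ii), each off-diagonal term $h_i^{(\alpha)}\overline{h_i^{(\beta)}}$ has Fourier support in $C_\alpha - C_\beta \subseteq \Freq{N}\setminus W$ (the window $W$ consists of $2\gamma+1$ distinct residues since $\gamma < N/2$). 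Since $\ell = \cos(\pi/N)e_0 - \tfrac{1}{2}(e^{i\pi/N}e_1 + e^{-i\pi/N}e_{-1})$ has Fourier support $\{-1,0,1\} \subseteq W$ (using $\gamma \geq 1$), projecting $\ell = \sum_i |h_i|^2$ onto $\bigoplus_{m \in W} \CC e_m$ annihilates every cross term while preserving $\ell$ and the diagonal squares, yielding
\[ \ell = \sum_i \sum_{\alpha \in A} |h_i^{(\alpha)}|^2 \quad \text{in } \cF(N,\CC). \]

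Next I lift this to an identity of trigonometric polynomials in $\theta$. For each $i,\alpha$, choose integer representatives of $C_\alpha$ lying in some interval $\{x_\alpha,\ldots,x_\alpha+\gamma\}$ (possible because $\gamma < N/2$), and form the trig polynomial $\tilde h_i^{(\alpha)}(\theta) := \sum_{k \in C_\alpha} c_{i,\alpha,k} e^{-ik\pi/N} e^{ik\theta}$. Then $|\tilde h_i^{(\alpha)}(\theta)|^2$ is a nonnegative trigonometric polynomial of degree $\leq \gamma$, since the exponents $k - k'$ appearing in its expansion lie in $\{-\gamma,\ldots,\gamma\}$. Consequently $f(\theta) := \cos(\pi/N) - \cos\theta - \sum_{i,\alpha} |\tilde h_i^{(\alpha)}(\theta)|^2$ is a trigonometric polynomial of degree $\leq \gamma$ that vanishes at the $N$ distinct vertex angles $\theta_1,\ldots,\theta_N$ (by the $\cF(N,\CC)$-identity just established). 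A nonzero trig polynomial of degree $d$ has at most $2d$ zeros in $[0,2\pi)$, so $\gamma < N/2$ forces $f \equiv 0$, i.e.\ $\cos(\pi/N) - \cos\theta = \sum_{i,\alpha} |\tilde h_i^{(\alpha)}(\theta)|^2 \geq 0$ for every $\theta \in \RR$. But at $\theta = 0$ the left-hand side equals $\cos(\pi/N) - 1 < 0$, a contradiction; hence $K$ is not sos-valid.

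The crux is the Fourier-projection step: conditions (i) and (ii) are calibrated precisely so that the diagonal and off-diagonal contributions to $\sum_i |h_i|^2$ occupy disjoint parts of the frequency spectrum, allowing one to strip the off-diagonals and reduce to an essentially single-cluster SOS certificate of $\ell$. Once this is in hand, the zero-counting at the end is a direct adaptation of the proof of Proposition~\ref{prop:lbthetarank}.
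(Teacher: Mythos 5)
Your proof is correct. The heart of it --- that conditions (i) and (ii) force the intra-cluster products $\overline{h_i^{(\alpha)}}h_i^{(\alpha)}$ and the inter-cluster products $\overline{h_i^{(\alpha)}}h_i^{(\beta)}$ to occupy disjoint parts of the Fourier spectrum, with $\ell$ supported where only the former live --- is exactly the mechanism the paper uses. The difference is in how the contradiction is then extracted. The paper builds a single dual linear functional $\cL$ on $\cF(N,\CC)$, sending $e_k$ to $e^{-i\pi(k\bmod N)/N}$ when $d(k,0)\leq\gamma$ and to $0$ otherwise, and checks $\cL(\ell)<0$ while $\cL(|h|^2)\geq 0$ for every $h$ supported on $K$; the verification of the second inequality is precisely your cluster decomposition plus the per-cluster translation to $[0,\gamma]$. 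You instead argue on the primal side: project onto the window $W$ to purify the certificate into a sum of single-cluster squares, extend each such square to a globally nonnegative trigonometric polynomial of degree at most $\gamma$, and use zero-counting ($2\gamma<N$ against $N$ vanishing points) to promote the vertex identity to an identity on the whole circle, which then fails at $\theta=0$. The two endgames are equivalent --- the paper's Remark \ref{rem:mapL} explains that $\cL$ is exactly ``project, extend, evaluate at $z=1$'' --- but your packaging isolates a clean intermediate structural fact (any $K$-supported certificate can be replaced by one in which every square is supported on a single cluster) and replaces the explicit functional by an elementary degree/zero-count in the spirit of Proposition \ref{prop:lbthetarank}, at the modest cost of choosing integer representatives cluster by cluster. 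Both of your uses of the hypothesis $\gamma<N/2$ (distinctness of the $2\gamma+1$ residues in $W$, and $2\gamma<N$ in the zero count) are exactly where the paper needs it as well.
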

\begin{proof}

To prove this theorem, we will construct a linear functional $\cL$ on $\cF(N,\CC)$ such that:
\begin{itemize}
\item[(a)] $\cL(\ell) < 0$, and;
\item[(b)] for any $h$ supported on $K$ we have $\cL(|h|^2) \geq 0$.
\end{itemize}
Clearly this will show that we cannot have $\ell = \sum_{i} |h_i|^2$ where $\supp h_i \subseteq K$.

We first introduce a piece of notation that will be needed for the definition of $\cL$: Given $k \in \Freq{N}$, we let $k\bmod N$ be the unique element in 
\[ \Bigl\{-\lceil N/2 \rceil+1,\dots,\lfloor N/2 \rfloor\Bigr\} \]
that is equal to $k$ modulo $N$. The main property that will be used about this operation is the following, which can be verified easily: If $k,k' \in [0,\gamma]$ where $\gamma< N/2$ then:
\begin{equation}
\label{eq:identitymod}
 (k'-k) \bmod N = (k' \bmod N) - (k \bmod N).
\end{equation}
We now define the linear functional $\cL:\cF(N,\CC)\rightarrow \CC$ as follows, for all $k \in \Freq{N}$:
\begin{equation}
\label{eq:defL}
\cL(e_k) = \begin{cases} e^{-\frac{i\pi}{N}(k \bmod N)} & \text{ if } d(0,k) \leq \gamma\\
                            0 & \text{ else.} \end{cases}
\end{equation}
The map $\cL$ defined here can actually be interpreted in terms of a point evaluation ``outside'' the regular $N$-gon, cf. Remark \ref{rem:mapL} for more details.
We now prove that $\cL$ satisfies properties (a) and (b) above.
\begin{itemize}
\item[(a)] It is easy to see that $\cL(\ell)  < 0$. Indeed since $\gamma \geq 1$ we have $\cL(e_1) = e^{-i\pi/N}$ and $\cL(e_{-1}) = e^{i\pi/N}$ which implies that:
\[ \cL(\ell) = \cL\Bigl(\cos(\pi/N) e_0 - (e^{i\pi/N} e_1 + e^{-i\pi/N}e_{-1})/2\Bigr) = \cos(\pi/N) - 1 < 0. \]

\item[(b)] We now show that if $h$ is function supported on $K$, then $\cL(|h|^2) \geq 0$.
Since $K = \cup_{\alpha \in A} C_{\alpha}$, we can write
\[ h = \sum_{k \in K} h_k e_k = \sum_{\alpha \in A} \sum_{k \in C_{\alpha}} h_{k} e_k. \]
Thus
\begin{equation}
\label{eq:decomph2}
|h|^2 = h^* h = \underbrace{\sum_{\alpha \in A} \left| \sum_{k \in C_{\alpha}} h_k e_k \right|^2}_{P} + \underbrace{\sum_{\alpha \neq \alpha'} \sum_{k \in C_{\alpha}, k' \in C_{\alpha'}} h_k^* h_{k'} e_k^* e_{k'}}_{Q}.
\end{equation}
Let $P$ and $Q$ be the first and second terms in the equation above. We will show that $\cL(Q) = 0$ and that $\cL(P) \geq 0$.
Observe that if $k \in C_{\alpha}$ and $k' \in C_{\alpha'}$ where $\alpha\neq \alpha'$ then we have:
\[ \cL(e_k^* e_{k'}) = \cL(e_{k' - k}) = 0 \]
where the last equality follows since $d(k'-k,0) = d(k',k) > \gamma$ (cf. assumption (ii) on the clustering). Thus this shows that $\cL(Q) = 0$.

We will now show that $\cL(P) \geq 0$, by showing that for any $\alpha \in A$ we have 
\[ \cL\left(\left| \sum_{k \in C_{\alpha}} h_k e_k \right|^2\right) \geq 0. \]
Let $\alpha \in A$. By assumption (i) on the clustering, we know that the in-diameter of $C_{\alpha}$ is $\leq \gamma$, i.e., that $C_{\alpha}$ is included in an interval $[x,x+\gamma]$. Note that since
\[ \left| \sum_{k \in C_{\alpha}} h_k e_k \right|^2 = \left| e_{-x} \sum_{k \in C_{\alpha}} h_k e_k \right|^2 = \left| \sum_{k \in C_{\alpha}} h_k e_{k-x} \right|^2 \]
we can assume without loss of generality that $x=0$.
Now since $C_{\alpha} \subseteq [0,\gamma]$, we have from \eqref{eq:identitymod} that for any $k,k' \in C_{\alpha}$:
\begin{equation}
 \label{eq:identitymod2}
 (k'-k) \bmod N = (k' \bmod N) - (k \bmod N)
\end{equation}
Using this we have:
\[ 
\label{eq:eqsii}
\begin{aligned}
\cL\left(\left|\sum_{k \in C_{\alpha}} h_k e_k\right|^2\right) 
= \sum_{k,k' \in C_{\alpha}} h_k^* h_{k'} \cL(e_{k'-k}) 
&\overset{(a)}{=} \sum_{k,k' \in C_{\alpha}} h_k^* h_{k'} e^{-\frac{i\pi}{N} ( (k'-k) \bmod N ) }\\
&\overset{(b)}{=} \sum_{k,k' \in C_{\alpha}} h_k^* h_{k'} e^{-\frac{i\pi}{N} ( k' \bmod N )} e^{\frac{i\pi}{N} (k \bmod N )} \\
&= \left|\sum_{k \in C_{\alpha}} h_k e^{-\frac{i\pi}{N} ( k \bmod N )} \right|^2 \geq 0
\end{aligned}
 \]
where in $(a)$ we used the fact that $d(0,k'-k) = d(k',k) \leq \gamma$ and in $(b)$ we used identity \eqref{eq:identitymod2}.
Thus this shows that $\cL(|h|^2) \geq 0$ for all $h$ supported on $C_{\alpha}$, which implies that $\cL(P) \geq 0$ (since $P = \sum_{\alpha \in A} \left| \sum_{k \in C_{\alpha}} h_k e_k \right|^2$) which is what we wanted.
\end{itemize}

\if0
\fi

\end{proof}

\begin{rem}
To illustrate the previous theorem consider the following two simple applications:
\begin{itemize}
\item Note that the lower bound of $N/4$ on the theta-rank of the $N$-gon (cf. Proposition \ref{prop:lbthetarank} in Section \ref{sec:soshierarchy}) can be obtained as a direct corollary of Theorem \ref{thm:clustering}. Indeed if $K$ is contained in the open interval $(-\lceil N/4 \rceil, \lceil N/4 \rceil)$, then the in-diameter of $K$ is $< N/2$ which means that if we consider $K$ as a single cluster, it satisfies conditions (i) and (ii) of the theorem with $\gamma = \indiam(K)$. Thus such a $K$ is not sos-valid.
\item We can also give another simple application of the previous theorem:  Assume $K$ is a set of frequencies that has no two consecutive frequencies, i.e., for any $k,k' \in K$ where $k\neq k'$ we have $d(k,k') \geq 2$. It is not hard to see that such a set $K$ cannot be sos-valid: indeed if $h$ is a function supported on $K$, then the expansion of $|h|^2$ does not have any term involving the frequencies $e_1$ or $e_{-1}$. Thus it is not possible to write $\ell$ as a sum-of-squares of elements supported on such $K$. This simple fact can be obtained as a consequence of Theorem \ref{thm:clustering} if we consider each frequency of $K$ as its own cluster (i.e., we write $K = \cup_{k \in K} \{k\}$) and conditions (i) and (ii) of the theorem are satisfied with $\gamma=1$.
\end{itemize}
\end{rem}

\begin{rem}
\label{rem:mapL}
The map $\cL$ defined in \eqref{eq:defL} in the proof of Theorem \ref{thm:clustering} can actually be interpreted in terms of evaluating a function $h \in \cF(N,\CC)$ at the point $(x=1,y=0)$ ``outside'' the regular $N$-gon. Indeed, note that given any function on the vertices of the $N$-gon $h \in \cF(N,\CC)$, we can extend it naturally to a function (a trigonometric polynomial)  $\tilde{h} \in \CC(z)$ defined on the whole unit circle, as follows:
\begin{equation}
\label{eq:extension}
h = \sum_{k \in \supp(h)} h_k e_k \in \cF(N,\CC) \quad \longmapsto \quad \tilde{h} = \sum_{k \in \supp(h)} h_k e^{-i(k \bmod N) \pi /N} z^{k \bmod N} \in \CC(z).
\end{equation}
Note that \eqref{eq:extension} maps the pure frequencies $e_k$ to the monomial $e^{-i(k\bmod N)\pi/N} z^{k \bmod N} \in \CC(z)$.
Then it is not difficult to verify that $\cL$ satisfies the following points:
\begin{itemize}
\item[(i)] If $\supp h \subseteq B(0,\gamma)$ then $\cL(h) = \tilde{h}(1)$.
\item[(ii)] If $\indiam(\supp(h)) \leq \gamma < N/2$ then $\cL(|h|^2) = |\tilde{h}(1)|^2 \geq
 0$.
\end{itemize}
Point (i) says that if the support of $h$ is contained in the ball centered at $0$ and with radius $\gamma$, then $\cL(h)$ is nothing but the evaluation of the trigonometric polynomial $\tilde{h}$ at $z=1$. This property follows directly from the definitions of $\cL$ and the map \eqref{eq:extension}. Note that it shows in particular that $\cL(\ell) < 0$ (where $\ell$ is the facet functional \eqref{eq:elldef}) since the support of $\ell$ is $\{-1,0,1\}$ and since $\ell$ cuts the point $z=1$ from the regular $N$-gon.
Point (ii) says that if the support of $h$ has in-diameter $\leq \gamma$ then we have $\cL(|h|^2)= |\tilde{h}(1)|^2 \geq 0$. The proof is essentially given in \eqref{eq:eqsii} and uses the fact that if $\indiam(\supp(h)) \leq \gamma$, then $\supp(|h|^2) \subseteq B(0,\gamma)$ and also that $\widetilde{|h|^2} = |\tilde{h}|^2$ (this latter property uses the fact that $\gamma < N/2$).
\end{rem}


\subsection{An algorithm to find valid clusterings and a logarithmic lower bound}

 We now study sets $K$ which admit a clustering that satisfies points (i) and (ii) of Theorem \ref{thm:clustering}. The main purpose of this section is to show that any set $K$ with $|K| < \ln( N/2 )$ admits such a clustering, which implies that it cannot be sos-valid. This would thus show that any $\Rot_N$-equivariant Hermitian psd lift of the regular $N$-gon has to have size at least $\ln( N/2 )$.

For convenience we call a \emph{valid clustering} of a set $K$, any clustering that satisfies points (i) and (ii) of Theorem \ref{thm:clustering}. We state this in the following definition for future reference:
\begin{defn}
\label{def:validclustering}
Let $K \subseteq \Freq{N}$. We say that $K$ has a \emph{valid clustering} if $K$ can be decomposed into disjoint clusters $(C_{\alpha})_{\alpha \in A}$:
\[ K = \bigcup_{\alpha \in A} C_{\alpha}, \]
such that the following holds for some $1\leq \gamma < N/2$:
\begin{enumerate}
\item[(i)] For any $\alpha \in A$, $C_{\alpha}$ has in-diameter $\leq \gamma$.
\item[(ii)] For any $\alpha \neq \alpha'$, $d(C_{\alpha},C_{\alpha'}) > \gamma$.
\end{enumerate}
\end{defn}

We propose a simple greedy algorithm to search for a valid clustering for any set $K \subseteq \Freq{N}$: We start with each point of $K$ in its own cluster and at each iteration we merge the two closest clusters. We keep doing this until we get a clustering that satisfies the required condition, or until all the points are in the same cluster. We show in this section that if the number of points of $K$ is small enough, if $|K| < \ln( N/2 )$, then this algorithm terminates by producing a valid clustering of $K$.
For reference we describe the algorithm more formally in Algorithm \ref{alg:clustering}.
\begin{algorithm}[h!]
\caption{Algorithm to produce a clustering of a set $K$}
\begin{algorithmic}
\STATE {\bf Input:} A set $K \subseteq \Freq{N}$
\STATE {\bf Output:} A \emph{valid clustering} of $K$ (in the sense of Definition \ref{def:validclustering}) or ``0'' if no valid clustering found.
\STATE $\bullet$ Consider initial clustering where each element of $K$ is in its own cluster. If this clustering is already valid (which is equivalent to say that for any distinct elements $k,k' \in K$ we have $d(k,k') \geq 2$) then output this clustering as a valid clustering with parameter $\gamma = 1$.
\STATE $\bullet$ Precompute the pairwise distances between points in $K$ and sort these distances in increasing order $d_1 \leq d_2 \leq d_3 \leq \dots$ (cf. Figure \ref{fig:freqcyclegraph2}).
\FOR{$i=1,2,\dots,|K|-1$}
\STATE Let $x,y \in K$ be the $i$'th closest points in $K$ so that $d(x,y) = d_i$. If $x$ and $y$ are in different clusters, then merge these two clusters.
\STATE If the current clustering satisfies points (i) and (ii) of Definition \ref{def:validclustering} (with $\gamma$ equal to the largest in-diameter in all the clusters) stop and output the current clustering.
\ENDFOR
\STATE If no valid clustering was found, output ``0''
\end{algorithmic}
\label{alg:clustering}
\end{algorithm}

In the next theorem, we show that any set $K \subseteq \Freq{N}$ with $|K| < \ln(N/2)$ has a valid clustering.
\begin{thm}
If a set $K \subseteq \Freq{N}$ satisfies $|K| < \ln(N/2)$, then a valid clustering of $K$ exists and Algorithm \ref{alg:clustering} will produce one.
\end{thm}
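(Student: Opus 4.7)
The plan is to identify the algorithm with Kruskal's greedy merging and derive an exponential lower bound on $N$ from the assumption that the algorithm fails at every step.

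List the points of $K$ in cyclic order, with forward gaps $g_1, \ldots, g_d$ summing to $N$ (where $d = |K|$), and sort these as $g_{(1)} \leq g_{(2)} \leq \cdots \leq g_{(d)}$. A short exchange argument shows that the MST of $d$ points on a cycle uses precisely the $d-1$ smallest consecutive-arc edges, so the algorithm's successful merges occur, in order, at the gaps $g_{(1)}, \ldots, g_{(d-1)}$. Throughout the process each cluster is a cyclically consecutive arc of points of $K$, and its in-diameter is exactly the sum of the forward gaps lying inside it. Hence after the $j$-th merge the maximum in-diameter satisfies
\[ \gamma^{(j)} \leq S_j := g_{(1)} + g_{(2)} + \cdots + g_{(j)}, \]
the minimum distance between distinct clusters equals $g_{(j+1)}$ (for $j \leq d-2$), and after the final merge ($j = d-1$) the unique cluster has in-diameter $N - g_{(d)}$.

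Now suppose for contradiction that the algorithm fails at every stage. Failure at $j = 0$ (where one must take $\gamma \geq 1$) forces $g_{(1)} = 1$, so $S_1 = 1$. For $1 \leq j \leq d-2$, failure of condition (ii) of Definition~\ref{def:validclustering} gives $g_{(j+1)} \leq \gamma^{(j)} \leq S_j$, hence $S_{j+1} \leq 2 S_j$; failure of condition (i) would only mean $S_j \geq \gamma^{(j)} \geq N/2$, strengthening the bound below. Iterating yields $S_{d-1} \leq 2^{d-2}$. Finally, failure at $j = d-1$ says $N - g_{(d)} \geq N/2$, i.e.\ $g_{(d)} \leq N/2$. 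Adding,
\[ N \;=\; S_{d-1} + g_{(d)} \;\leq\; 2^{d-2} + N/2, \]
so $N \leq 2^{d-1}$, i.e.\ $d \geq 1 + \log_2 N$. However $1 + \log_2 N > \ln(N/2)$ for every $N \geq 1$ (since $\ln 2 < 1$), contradicting the hypothesis $d < \ln(N/2)$. Consequently the algorithm must succeed at some stage, which simultaneously certifies the existence of the valid clustering asserted by the theorem.

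The main technical point to justify carefully is the cycle-MST picture: namely, that each intermediate cluster is a cyclically consecutive arc whose in-diameter is exactly the sum of its interior forward gaps, so the crucial estimate $\gamma^{(j)} \leq S_j$ really holds (and so that at the last stage the formula $N - g_{(d)}$ for the in-diameter is correct). Once this geometric fact is in place, the remainder is a short arithmetic induction on the $S_j$'s. As a side observation, the argument in fact delivers the sharper bound $|K| \geq 1 + \log_2 N$ under failure, so the theorem's hypothesis $|K| < \ln(N/2)$ leaves considerable slack.
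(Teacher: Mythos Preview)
Your argument is correct and is genuinely different from (and cleaner than) the paper's. The paper works with \emph{all} $\binom{|K|}{2}$ pairwise distances $d_1\le d_2\le\cdots$, bounds the cluster in-diameter at iteration $i$ by $S_i=d_1+\cdots+d_i$, and then converts the inequalities $d_{i+1}\le S_i$ into a lower bound on $|K|$ via the integral estimate $\sum d_{i+1}/S_i\ge\int dx/x$, obtaining $|K|\ge \ln(N/2)$. You instead exploit the cycle geometry directly: the merges are exactly the $|K|-1$ smallest \emph{consecutive} gaps, the clusters are always arcs, the inter-cluster distance after the $j$-th merge is \emph{exactly} $g_{(j+1)}$, and failure gives the clean recursion $S_{j+1}\le 2S_j$. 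This yields the sharper bound $|K|\ge 1+\log_2 N$ with no integral at all. The ``MST on a cycle uses the $d-1$ smallest consecutive gaps'' step is easy to justify: any non-consecutive pair has distance at least the largest gap along its shorter arc, so Kruskal has already connected its endpoints before that pair is examined.

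One small point deserves care. The theorem claims not only that a valid clustering exists but that \emph{Algorithm~\ref{alg:clustering}} outputs one, and that algorithm performs only $|K|-1$ iterations over sorted \emph{pairwise} distances, not $|K|-1$ merges. Your analysis is indexed by merges, and a non-consecutive pair with small distance (e.g.\ $d(0,2)$ when $g_{(1)}=g_{(2)}=1$) can be processed before some larger consecutive gap, so the loop may terminate having visited fewer than $|K|-1$ merge-states. Thus your contradiction (``failure at every merge-state $0,\dots,d-1$'') is not literally what the algorithm's failure gives you. This is a minor bookkeeping gap rather than a conceptual one --- and, for what it is worth, the paper's own proof makes an analogous unjustified step when it asserts $|K|\ge b$ without argument. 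The existence half, which is what is actually used downstream (Theorem~\ref{thm:lbsosvalid}), is proved cleanly by your method, and with a better constant.
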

\begin{proof}
Observe that at the end of iteration $i$ of the algorithm, the distance between any pair of clusters is greater than or equal $d_{i+1}$: Assume for contradiction that there are two clusters $C,C'$ at iteration $i$ where $d(C,C') < d_{i+1}$. This means that there exist $x \in C$, $y \in C'$ such that $d(x,y) < d_{i+1}$. But this is impossible because the algorithm processes distances in increasing order, and so $x$ and $y$ must have merged in the same cluster at some iteration $\leq i$.

Now, to prove that the algorithm terminates and produces a valid clustering, we need to show that at some iteration $i$, each cluster has in-diameter smaller than $\min(d_{i+1},N/2)$. Note that one can get a simple upper bound on the in-diameter of the clusters at iteration $i$: indeed, it is not hard to show that at iteration $i$ any cluster has in-diameter at most $S_i$, where $S_i$ is defined as:
\[ S_i := d_1+d_2+\dots+d_i = \sum_{j=1}^i d_j. \]
Figure \ref{fig:freqcyclegraph2} shows a simple illustration of this bound.
\newdimen\R
\R=2.0cm
\newdimen\Rs
\Rs=0.1cm
\def\Nnodes{20}
\def\KK{0,1,3,7}
\begin{figure}[ht]
\centering
\begin{tikzpicture}
    \draw[black] circle (\R);
    \foreach \i in {1,...,\Nnodes} {
      \filldraw[fill=white,draw=black] (\i*360/\Nnodes:\R) circle (\Rs);
    }
    \foreach \i in \KK {
      \fill[black] (\i*360/\Nnodes:\R) circle (\Rs);
    }
    \newcommand*{\lasti}{-1}
    \foreach \i [count=\xi] in \KK {
      \ifnum \lasti > -1
		\xdef\as{\lasti*360/\Nnodes}
         \xdef\ae{\i*360/\Nnodes}
         \pgfmathparse{int(\xi-1)}
         \edef\xx{\pgfmathresult}
         \draw[black,<->] ([shift={(\as+1:\R+2*\Rs)}]0,0) arc (\as+1:\ae-1:\R+2*\Rs) node at (.5*\as+.5*\ae:\R+5*\Rs) {$d_{\xx}$};
      \fi
      \xdef\lasti{\i} 
     }
\end{tikzpicture}
\caption{A set of frequencies $K$. At iteration 0 of the algorithm each frequency is in its own cluster. At iteration 1 of the algorithm, the two nodes at distance $d_1$ from each other are merged in a single cluster. At iteration 2, the two nodes at distance $d_2$ are merged and we get one cluster having 3 nodes with in-diameter $d_1+d_2$. In general, at iteration $i$ the clusters cannot have in-diameter larger than $d_1+\dots+d_i$.}
\label{fig:freqcyclegraph2}
\end{figure}
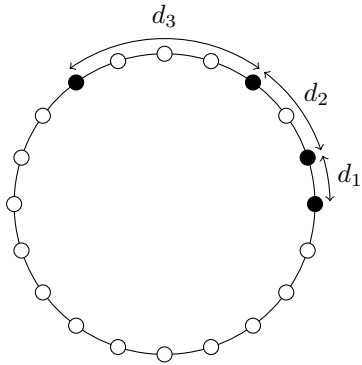

Let $a$ be the largest index $i$ where $d_i = 1$, and let $b$ the largest index $i$ where $S_i < N/2$.\footnote{Note that we can assume $\indiam(K) \geq N/2$ which implies that $S_{|K|-1} \geq N/2$. Indeed, if the in-diameter of $K$ is smaller than $N/2$, then we have a valid clustering of $K$ by considering $K$ as a single cluster.} If $i \in [a,b]$, then at the end of the $i$'th iteration, the distance between any two clusters is greater than 1 (since $d_{i+1} > 1$) and the in-diameter of any cluster is smaller than $N/2$. To prove that the algorithm terminates and produces a valid clustering, it suffices to show that there exists $i \in [a,b]$ such that $d_{i+1} > S_i$.

Assume for contradiction that this is not the case. Then this means that we have:
\[
\begin{aligned}
d_{a+1} & \leq d_1 + \dots + d_a\\
d_{a+2} & \leq d_1 + \dots + d_{a+1}\\
\vdots & \\
d_{b+1} & \leq d_1 + \dots + d_b
\end{aligned}
\]
We will now show that this implies that $|K| \geq \ln( N/2 )$ which contradicts the assumption of the theorem.
Define the function $f(x) = 1/x$ and note that, on the one hand we have:
\[
\sum_{i=a}^b d_{i+1} f(S_i) = \sum_{i=a}^b d_{i+1} \frac{1}{d_1+\dots+d_{i}} \leq \sum_{i=a}^b 1 = b-a + 1.
\]
On the other hand, since $f$ is a decreasing function we have (cf. Figure \ref{fig:integration}):
\[
\sum_{i=a}^b d_{i+1} f(S_i) \geq \int_{S_a}^{S_{b+1}} f(x)dx = \left[\ln(x)\right]_{S_a}^{S_{b+1}} = \ln(S_{b+1}) - \ln(S_a).
\]
Thus we get that:
\[
b-a+1 \geq \ln(S_{b+1}) - \ln(S_a).
\]
Now note that $S_a = a$ since $d_i = 1$ for all $1 \leq i \leq a$. Thus we have:
\[ b \geq \ln(S_{b+1}) - \ln(S_a) + a - 1 \geq \ln(S_{b+1}) \]
since $a - \ln(S_a) \geq 1$ (we assume here that $a \geq 1$ because otherwise the distance between any two elements in $K$ is at least 2 in which case $K$ is clearly not sos-valid).
Now since $|K| \geq b$ and $S_{b+1} \geq N/2$ we get
\[ |K| \geq \ln(N/2) \]
as desired.

\begin{figure}[ht]
  \centering
  \includegraphics[width=9cm]{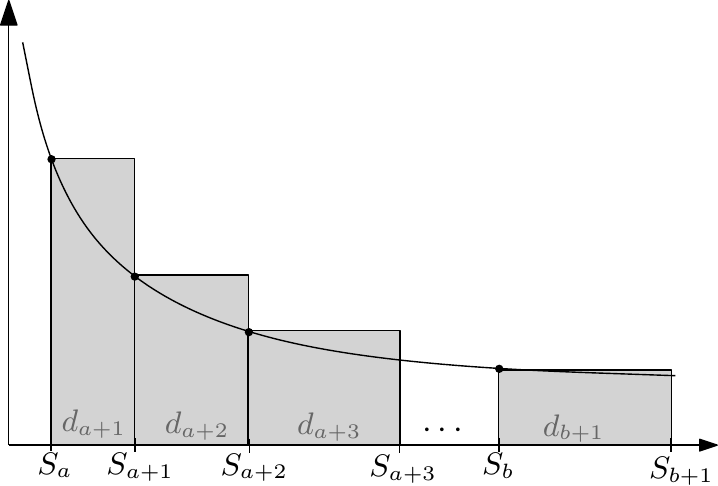}
  \caption{}
  \label{fig:integration}
\end{figure}
\end{proof}

\section{Conclusion}

Regular polygons in the plane have played an important role in the study of extended formulations.
In this paper we studied equivariant psd lifts of regular polygons.
One of the main techniques to obtain equivariant psd lifts of polytopes is using the Lasserre/sum-of-squares hierarchy. The first contribution of this paper was to show that the hierarchy requires exactly $\lceil N/4 \rceil$ iterations for the regular $N$-gon. To prove this we used a specific property about the levels of the facet defining linear functionals of the regular $N$-gon. The techniques we developed are actually quite general and can be used to study the theta-rank of general $k$-level polytopes. For example our techniques allowed us to show that the theta-rank of the parity polytope in $\RR^n$ is $\lceil n/4 \rceil$. They may also be useful
for understanding the theta-rank of other families of $k$-level polytopes, 
such as matroid base polytopes, which were studied in this context 
in the recent work of Grande and Sanyal \cite{grande2014theta}.

The second contribution of this paper was an explicit equivariant psd lift of the regular $2^n$-gon of size $2n-1$. This lift was obtained by showing that the facet-defining linear functionals admit a \emph{sparse} sum-of-squares representation that requires only a small number of ``frequencies''. This construction gives the first example of a polytope with an exponential gap between equivariant psd lifts and equivariant LP lifts. Also it shows that one can construct equivariant psd lifts that are exponentially smaller than the lift produced by the sum-of-squares hierarchy.
We believe that the idea of looking at sparse sum-of-squares representation (i.e., by ``skipping'' frequencies) can potentially be used in other situations and lead to smaller semidefinite lifts.

Finally we proved that the size of our equivariant psd lift is essentially optimal by showing that any equivariant psd lift of the regular $N$-gon has size at least $\ln(N/2)$.
An important question that remains open in the study of regular polygons is to know whether one can obtain smaller psd lifts by relaxing the equivariance condition. Currently the only lower bound on the psd rank of $N$-gons in the plane is $\Omega\left(\sqrt{\frac{\log N}{\log \log N}}\right)$ which comes from quantifier elimination theory \cite{gouveia2011lifts,gouveia2013worst}.


%


\appendix

\newpage

\section{Finishing the proof on theta-rank of the regular $N$-gon}
\label{app:theta-rank}

In this appendix we complete the proof of Theorem \ref{thm:theta-rank-Ngon} concerning the theta-rank of the $N$-gon. We first prove the following lemma:
\begin{lem}
\label{lem:chebyshev-tangent}
Let $N$ be a positive integer and let $T_N$ be the Chebyshev polynomial of degree $N$. Then for any $u \geq \cos(\pi/N)$, the curve of $T_N(x)$ lies above its tangent at $x=u$ on the interval $[-1,\infty)$, i.e.,
\begin{equation}
\label{eq:ineqTN}
 T_N(x) \geq T_N(u) + T'_N(u)(x - u) \quad \forall x \in [-1,\infty).
\end{equation}
Furthermore, when $N$ is even the inequality \eqref{eq:ineqTN} is true for all $x \in \RR$.
\end{lem}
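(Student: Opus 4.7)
My plan is to establish convexity of $T_N$ on the interval $[\cos(\pi/N),\infty)$ and then piece together the desired inequality region by region.

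Write $L(x) = T_N(u) + T_N'(u)(x-u)$ for the tangent line at $u$. The first task is to show $T_N''(x)\geq 0$ on $[\cos(\pi/N),\infty)$, which I would derive from the Chebyshev differential equation $(1-x^2)T_N''(x) = xT_N'(x) - N^2 T_N(x)$. On $[\cos(\pi/N),1]$ I would substitute $x=\cos\theta$ with $\theta\in[0,\pi/N]$, which reduces convexity to the inequality $\cos\theta\sin(N\theta) - N\sin\theta\cos(N\theta)\geq 0$; this quantity vanishes at $\theta=0$ and has derivative $(N^2-1)\sin\theta\sin(N\theta)\geq 0$ on $[0,\pi/N]$, so the inequality holds throughout. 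On $[1,\infty)$ I would substitute $x=\cosh\phi$ with $\phi\geq 0$, and the analogous hyperbolic quantity $\cosh\phi\sinh(N\phi) - N\sinh\phi\cosh(N\phi)$ has derivative $-(N^2-1)\sinh\phi\sinh(N\phi)\leq 0$, so it is non-positive; this opposite sign is exactly compensated by the flip in $1-x^2$ past $x=1$, yielding $T_N''\geq 0$ again. The same parametrizations also give $T_N'(u)\geq 0$ for $u\geq \cos(\pi/N)$, so $L$ is non-decreasing.

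With convexity in hand, I would split the real line into three regions. For $x\in[\cos(\pi/N),\infty)$, convexity of $T_N$ on this interval yields $T_N(x)\geq L(x)$ directly. For $x\in[-1,\cos(\pi/N)]$, I would note that $T_N(x)\in[-1,1]$ on $[-1,1]$, and that convexity of $T_N$ on $[\cos(\pi/N),u]$ applied at the left endpoint gives $L(\cos(\pi/N))\leq T_N(\cos(\pi/N))=\cos\pi=-1$; monotonicity of $L$ then yields $L(x)\leq L(\cos(\pi/N))\leq -1\leq T_N(x)$. Finally, when $N$ is even and $x\leq -1$, the identity $T_N(-x)=T_N(x)$ together with $T_N(y)\geq 1$ for $y\geq 1$ gives $T_N(x)\geq 1$, while monotonicity of $L$ combined with the previous step's bound at $x=-1$ yields $L(x)\leq L(-1)\leq -1\leq 1\leq T_N(x)$.

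The main obstacle is the convexity verification: the Chebyshev ODE degenerates at $x=\pm 1$, so the trigonometric and hyperbolic regimes must be treated separately, and one must confirm that the sign flip in $1-x^2$ at $x=1$ is correctly matched (the value $T_N''(1) = N^2(N^2-1)/3\geq 0$ then follows by continuity of $T_N''$). Everything else is elementary case analysis driven by monotonicity of $L$ and the fact that $T_N$ is bounded below by $-1$ on $[-1,1]$ and by $1$ on $[1,\infty)$ (hence also on $(-\infty,-1]$ when $N$ is even).
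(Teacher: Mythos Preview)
Your proof is correct and follows essentially the same structure as the paper's: establish that $T_N$ is convex on $[\cos(\pi/N),\infty)$, deduce the tangent inequality there, then extend to $[-1,\cos(\pi/N)]$ using $T_N(x)\geq T_N(\cos(\pi/N))=-1\geq L(\cos(\pi/N))\geq L(x)$ via monotonicity of $L$, and finally handle $x\leq -1$ for even $N$ by comparing $T_N(x)\geq 1$ against $L(x)\leq -1$.

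The only substantive difference is in how convexity on $[\cos(\pi/N),\infty)$ is proved. The paper argues that $\cos(\pi/N)$ is the largest root of $T_N'$, and since the roots of $T_N''$ interlace those of $T_N'$, all roots of $T_N''$ lie to the left of $\cos(\pi/N)$; the positive leading coefficient then forces $T_N''\geq 0$ on $[\cos(\pi/N),\infty)$. Your approach via the Chebyshev differential equation and the trigonometric/hyperbolic parametrizations is more computational but entirely self-contained and avoids invoking interlacing. Both are clean; the paper's interlacing argument is a one-liner, while yours has the advantage of simultaneously yielding $T_N'(u)\geq 0$ from the same parametrization rather than appealing again to the root structure.
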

An illustration of Lemma \ref{lem:chebyshev-tangent} is given in Figure \ref{fig:plot_chebyshev_lemma_appendix}.
\begin{proof}
First observe that $T_N''(x) \geq 0$ for all $x \in [\cos(\pi/N),\infty)$: indeed note that $\cos(\pi/N)$ is the largest root of $T'_N$, and thus, since the roots of $T''_N$ interlace the roots of $T'_N$ we have necessarily that $T''_N \geq 0$ on $[\cos(\pi/N),\infty)$. Thus this shows that $T_N$ is convex on the interval $[\cos(\pi/N),\infty)$ and in particular shows that inequality \eqref{eq:ineqTN} holds for all $x \in [\cos(\pi/N),\infty)$.
It remains to show that the inequality \eqref{eq:ineqTN} holds for $x \in [-1,\cos(\pi/N))$. Since $\cos(\pi/N)$ is a minimum of $T_N$ on the interval $[-1,1]$ we have, for any $x \in [-1,\cos(\pi/N))$:
\[ T_N(x) \geq T_N(\cos(\pi/N)) \overset{(a)}{\geq} T_N(u) + T'_N(u) (\cos(\pi/N) - u) \overset{(b)}{\geq} T_N(u) + T'_N(u) (x - u) \]
where $(a)$ follows from the first part of the argument which shows that inequality \eqref{eq:ineqTN} holds for $x=\cos(\pi/N)$ and, where in $(b)$ we used the fact that $x \leq \cos(\pi/N)$ and that $T'_N(u) \geq 0$. Thus this proves inequality \eqref{eq:ineqTN}.

When $N$ is even inequality \eqref{eq:ineqTN} is clearly true for $x \leq -1$ also since for $x \leq -1$, $T_N(x) \geq 0$ whereas the linear function $T_N(u) + T'_N(u) (x - u)$ is negative.
\end{proof}

\begin{figure}[ht]
  \centering
  \includegraphics[width=9cm]{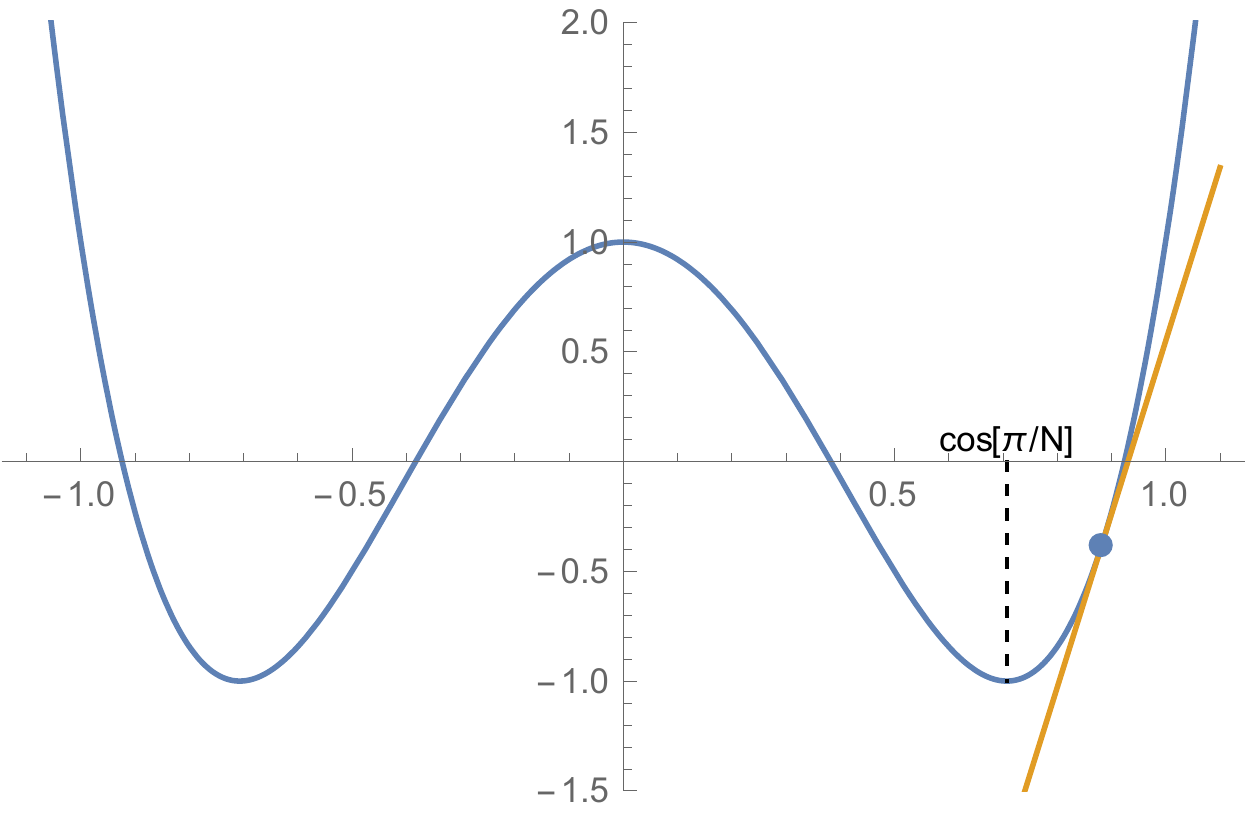}
  \caption{Illustration of Lemma \ref{lem:chebyshev-tangent} with $N=4$ and some value $u \geq \cos(\pi/N)$.}
  \label{fig:plot_chebyshev_lemma_appendix}
\end{figure}

We now complete the proof of Theorem \ref{thm:theta-rank-Ngon} by considering the cases where $N$ is not a necessarily a multiple of four. For $i=0,\dots,\lceil N/2 \rceil-1$, let $a_i = x_{i+1,N} = \cos((2i+1)\pi/N)$ and let $q_N$ be the polynomial that vanishes at the $a_i$'s:
\begin{equation}
\label{eq:qN}
q_N(x) = \prod_{i=0}^{\lceil N/2 \rceil-1} (x - a_i) = \prod_{i=0}^{\lceil N/2 \rceil-1} \left(x - \cos\left(\frac{(2i+1)\pi}{N}\right)\right).
\end{equation}
In the case where $N$ is a multiple of four we saw that $q_N(x)$ is, up to a scalar, $T_{N/2}(x)$. The next lemma expresses the polynomial $q_N$ in terms of Chebyshev polynomials for any $N$:
\begin{lem}
\label{lem:qN-Cheb}
The polynomial $q_N$ satisfies:
\[ q_N(x) \propto \begin{cases} T_{N/2}(x) & \text{ if $N$ is even}\\
(T_{\lfloor N/2\rfloor}(x) + T_{\lceil N/2\rceil}(x))/2 & \text{ if $N$ is odd}.
\end{cases} \]
where the symbol $\propto$ indicates equality up to multiplicative constant.
\end{lem}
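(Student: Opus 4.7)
The plan is to prove the lemma in two cases according to the parity of $N$, in each case matching both the roots and the degree of the two polynomials.

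\textbf{Case 1: $N$ even.} I will use the fact that the Chebyshev polynomial $T_{N/2}$ satisfies $T_{N/2}(\cos\theta) = \cos((N/2)\theta)$, so its roots are those $x = \cos\theta$ for which $(N/2)\theta$ is an odd multiple of $\pi/2$. Restricting to $\theta \in [0,\pi]$ gives $\theta = (2i+1)\pi/N$ for $i = 0, 1, \ldots, N/2 - 1$, and hence $T_{N/2}$ has the same roots as $q_N$. Since both are polynomials of degree $N/2$ with the same simple roots, they are proportional.

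\textbf{Case 2: $N$ odd.} Write $N = 2m+1$. The key identity is the sum-to-product formula
\[
\cos(m\theta) + \cos((m+1)\theta) = 2\cos\!\left(\frac{(2m+1)\theta}{2}\right)\cos\!\left(\frac{\theta}{2}\right),
\]
so that $(T_m + T_{m+1})(\cos\theta) = 2\cos((2m+1)\theta/2)\cos(\theta/2)$. Setting this to zero, the factor $\cos((2m+1)\theta/2)$ vanishes when $\theta = (2j+1)\pi/(2m+1)$ for $j = 0, 1, \ldots, m$, which yields exactly the $m+1$ roots of $q_N$. The second factor $\cos(\theta/2)$ only vanishes at $\theta = \pi$, which coincides with $j = m$ above. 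Since both $T_m + T_{m+1}$ and $q_N$ have degree $m+1 = \lceil N/2 \rceil$, they are proportional once we verify that these are all simple roots as polynomials in $x$.

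\textbf{The main subtlety} is exactly this last point: at $\theta = \pi$ the product $\cos((2m+1)\theta/2)\cos(\theta/2)$ appears to have a double zero in $\theta$, so one might worry that $x = -1$ is a double root of $T_m + T_{m+1}$ as a polynomial in $x$. The resolution is that the change of variables $x = \cos\theta$ also has vanishing derivative at $\theta = \pi$ (i.e.\ $dx/d\theta = -\sin\theta = 0$ there), so the double zero in $\theta$ corresponds to a simple zero in $x$. A quick check for small $m$ confirms this: for $m=1$ we have $T_1 + T_2 = 2x^2 + x - 1 = (2x-1)(x+1)$, whose root at $x=-1$ is simple, matching $q_3(x) \propto (x - \tfrac{1}{2})(x+1)$. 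Once this is verified, counting $m+1$ distinct simple roots in a degree-$(m+1)$ polynomial closes the argument.
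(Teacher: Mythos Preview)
Your proof is correct, and for the odd case it takes a genuinely different route from the paper. The paper argues by composition with $T_2$: it observes that the roots of $q_N(2x^2-1)$ are $\pm\cos((2i-1)\pi/(2N))$, identifies $q_N(T_2(x))$ with $xT_N(x)$ by matching roots, and then applies the Chebyshev identities $T_1 T_N = (T_{N-1}+T_{N+1})/2$ and $T_a(T_2(x)) = T_{2a}(x)$ to strip off the composition with $T_2$. Your approach via the sum-to-product formula
\[
\cos(m\theta)+\cos((m+1)\theta) \;=\; 2\cos\!\left(\tfrac{(2m+1)\theta}{2}\right)\cos\!\left(\tfrac{\theta}{2}\right)
\]
is more direct and needs only elementary trigonometry plus a root count; the paper's route, by contrast, packages the computation inside standard Chebyshev algebra.

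One remark: your ``main subtlety'' paragraph is correct but not needed. You have already exhibited $m+1$ \emph{distinct} values of $x$ (namely $\cos((2j+1)\pi/(2m+1))$ for $j=0,\dots,m$, distinct because $\cos$ is injective on $[0,\pi]$) at which the degree-$(m+1)$ polynomial $T_m+T_{m+1}$ vanishes. A polynomial of degree $m+1$ with $m+1$ distinct roots cannot have any of them be double, so simplicity is automatic from the degree count. The chain-rule analysis at $\theta=\pi$ and the $m=1$ sanity check are fine, but they are confirming something the degree argument already forces.
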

\begin{proof}
The case where $N$ is even is clear by comparing the roots of $q_N$ and those of $T_{N/2}$. For the case $N$ odd, observe that if $\cos\alpha$ is a root of $q_N$ then $\pm \cos( \alpha/2)$ are roots of $q_N(T_2(x)) = q_N(2x^2-1)$. Since the roots of $q_N$ are the $\{\cos((2i-1)\pi/N),i=1,\dots,\lceil N/2 \rceil\}$, the roots of $q_N(2x^2-1)$ are thus  $\{\pm \cos((2i-1)\pi/(2N)), i=1,\dots,\lceil N/2 \rceil\}$ (with a double root at 0). Note that these are exactly the roots of $xT_N(x)$ (the multiplication by $x$ is for the double root at 0). Thus from this observation we have for any $x \in \RR$:
\[
\begin{aligned} q_N(T_2(x)) \propto xT_N(x) = T_1(x) T_N(x) &\overset{(a)}{=} (T_{N-1}(x) + T_{N+1}(x))/2\\
&\overset{(b)}{=} (T_{(N-1)/2}(T_2(x)) + T_{(N+1)/2}(T_2(x)))/2.
\end{aligned} \]
Equality $(a)$ follows from the identity $T_a(x) T_b(x) = \frac{1}{2} (T_{a+b}(x) + T_{a-b}(x))$ and equality $(b)$ follows from $T_a(T_b(x)) = T_{ab}(x)$.
Thus since we are working with polynomials and since $\{T_2(x): x \in \RR\}$ is infinite we have the desired identity:
\[ q_N(x) \propto T_{\lfloor N/2\rfloor}(x) + T_{\lceil N/2\rceil}(x). \]
\end{proof}

We are now ready to finish the proof of Theorem \ref{thm:theta-rank-Ngon}. We distinguish the three remaining cases according to the residue class of $N$ modulo 4:
\begin{itemize}
\item Case $N=4m-1$: In this case the polynomial $q_N$ is even degree and we want to show that $q_N(x)$ is above its linear approximation at $x=\cos(\pi/N)$. From Lemma \ref{lem:qN-Cheb} we have that $q_N(x) \propto T_{2m-1}(x) + T_{2m}(x)$.    Since, for all $x\in [-1,\infty)$, $T_{2m}(x)$ and $T_{2m-1}(x)$ are both above their linear approximations at $\cos(\pi/N)$ (using Lemma \ref{lem:chebyshev-tangent} and because
    $\cos(\pi/N) \geq \cos(\pi/(2m))$ and $\cos(\pi/N) \geq \cos(\pi/(2m-1))$) it follows that the same holds for 
    $q_N$ on $[-1,\infty)$. Since, in addition $q_N$ has even degree and $q_N(-1) \geq 0$ this shows that $q_N(x)$ is above its linear approximation at $\cos(\pi/N)$ for all $x$, which is what we wanted.

\item Case $N = 4m-2$: In this case the polynomial $q_N$ is $q_{N}(x) = T_{2m-1}(x)$ (from Lemma \ref{lem:qN-Cheb}). Note that $q_N$ has odd degree. Thus to apply Proposition \ref{prop:interp2} we will add an additional ``dummy'' root for $q$ to make it even degree (the resulting interpolating polynomial $p$ we get will interpolate the linear function $l$ at this additional ``dummy'' point).
Consider the polynomial $\tilde{q_N}(x) = x q_N(x)$. We will show that the assumption of Proposition \ref{prop:interp2} holds for $\tilde{q_N}(x)$. Observe that
\[ \tilde{q_N}(x) = xq_N(x) \propto T_1(x) T_{2m-1}(x) = (T_{2m}(x)+T_{2m-2}(x))/2. \]
    Since both $T_{2m}$ and $T_{2m-2}$ are globally above their linear approximations at $\cos(\pi/N)$ (by Lemma \ref{lem:chebyshev-tangent} and because $\cos(\pi/N) \geq \cos(\pi/(2m))$
    and $\cos(\pi/N) \geq \cos(\pi/(2m-1))$), the same holds for $\tilde{q_N}(x) = x q_N(x)$. Thus this shows that $\tilde{q_N}(x)$ lies above its tangent at $x=\cos(\pi/N)$, which is what we want.
\item Case $N=4m-3$: In this case we have, from Lemma \ref{lem:qN-Cheb}, $q_N(x) = (T_{2m-1}(x)+T_{2m-2}(x))/2$. Note that the polynomial $q_N$ has odd degree and thus we need to add an additional ``dummy'' root to make it even degree. Take $\tilde{q_N}(x) = xq_N(x)$ and note that
 \[ \tilde{q_N}(x) \propto (T_{2m}(x)+T_{2m-2}(x) + T_{2m-1}(x) + T_{2m-3}(x))/4. \]
    Using Lemma \ref{lem:chebyshev-tangent}, for $x\in [-1,\infty)$, each of the four Chebyshev polynomials are above their linear approximation at $\cos(\pi/N)$ (because 
    $\cos(\pi/N) \geq \cos(\pi/(2m))$ and $\cos(\pi/N) \geq \cos(\pi/(2m-1))$ and $\cos(\pi/N) \geq \cos(\pi/(2m-2))$ and $\cos(\pi/N) \geq \cos(\pi/(2m-3))$).
    Since $\tilde{q_N}(x)$ has even degree and $\tilde{q_N}(-1) \geq 0$, it holds that $\tilde{q_N}$ is globally above its linear approximation at $x=\cos(\pi/N)$.
\end{itemize}

\section{Linear programming lifts}
\label{app:equivariantLPlifts}

In this section we recall the definitions of LP lifts and equivariant LP lifts. For reference we also provide the proof from \cite{gouveia2011lifts} that any equivariant LP lift of the regular $N$-gon must have size at least $N$ when $N$ is a power of a prime.

We first recall the definition of a linear programming (LP) lift:
\begin{defn}
\label{def:LPlift}
Let $P \subset \RR^n$ be a polytope. We say that $P$ has a LP lift of size $d$ if we can write $P = \pi(\RR^d_+ \cap L)$ where $\pi:\RR^d \rightarrow \RR^n$ is a linear map and $L$ is an affine subspace of $\RR^d$.
\end{defn}
We now give the definition of an equivariant LP lift, from \cite{gouveia2011lifts,kaibel2010symmetry} (also known as symmetric LP lift). We denote by $\mathfrak{S}_d$ the group of permutations on $d$ elements. If $\sigma \in \mathfrak{S}_d$ and $y \in \RR^d$, we denote by $\sigma y$ the left action of $\mathfrak{S}_d$ on $\RR^d$ which permutes the coordinates according to $\sigma$.
\begin{defn}
\label{def:equivariantLPlift}
Let $P \subset \RR^n$ be a polytope and assume that $P$ is invariant under the action of a group $G$. Let $P = \pi(\RR^d_+ \cap L)$ be a LP lift of size $d$. The lift is called $G$-equivariant if there exists a homomorphism $\Phi:G \rightarrow \mathfrak{S}_d$ such that:
\begin{itemize}
\item[(i)] The subspace $L$ is invariant under the permutation action of $\Phi(g)$, for all $g \in G$:
\begin{equation}
  \label{eq:LP-Linvariance}
  \Phi(g) y \in L \quad \forall g \in G, \; \forall y \in L.
\end{equation}
\item[(ii)] The following equivariance relation holds:
\begin{equation}
 \label{eq:LP-linequivariance}
 \pi(\Phi(g) y) = g\pi(y) \quad \forall g \in G, \; \forall y \in \RR^d_+ \cap L.
\end{equation}
\end{itemize}
\end{defn}

Given integer $N$, let $\Rot_N$ be the subgroup of rotations of the dihedral group of order $2N$. Note that $\Rot_N \isom \ZZ_N$.
\begin{prop} \cite[Proposition 3]{gouveia2011lifts}
If $N$ is a prime or a power of a prime, then any $\Rot_N$-equivariant LP lift of the regular $N$-gon has size $N$.
\end{prop}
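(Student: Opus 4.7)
My plan is to prove this in two steps: first show that the homomorphism $\Phi:\Rot_N\to\mathfrak{S}_d$ is necessarily injective, and then invoke the elementary fact that a cyclic subgroup of prime-power order $p^k$ inside $\mathfrak{S}_d$ forces $d\ge p^k$.

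For injectivity, I would fix a vertex $v_0$ of the regular $N$-gon and choose any preimage $y_0\in \RR^d_+\cap L$ with $\pi(y_0)=v_0$. By the equivariance relation~\eqref{eq:LP-linequivariance}, one has $\pi(\Phi(g)y_0)=g\cdot v_0$ for every $g\in \Rot_N$. Since $\Rot_N\cong \ZZ_N$ acts freely (in fact regularly) on the $N$ vertices of the polygon, the points $g\cdot v_0$ are pairwise distinct as $g$ varies, and therefore so are the lifted vectors $\Phi(g)y_0$. In particular $\Phi(g)y_0\ne y_0$ whenever $g\ne 0$. If some nontrivial $g$ were in $\ker \Phi$, then $\Phi(g)=\mathrm{id}$ would give $\Phi(g)y_0=y_0$, a contradiction. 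Thus $\Phi$ is injective.

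For the main conclusion, let $g_0$ be a generator of $\Rot_N$ and set $\tau:=\Phi(g_0)\in \mathfrak{S}_d$. By injectivity, $\tau$ has order exactly $N=p^k$. Decompose $\tau$ into disjoint cycles of lengths $\ell_1,\ldots,\ell_r$ with $\sum_{i=1}^r \ell_i=d$. The order of $\tau$ equals $\mathrm{lcm}(\ell_1,\ldots,\ell_r)$, and each $\ell_i$ divides $p^k$, so every $\ell_i$ lies in $\{1,p,p^2,\ldots,p^k\}$. The LCM of a collection of $p$-powers is simply the maximum element of that collection, so some $\ell_i$ equals $p^k=N$. Therefore $d\ge \ell_i=N$. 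Together with the trivial upper bound $d\le N$ (the standard slack-variable representation is $\Rot_N$-equivariant under the obvious permutation action on the $N$ slack coordinates), this gives $d=N$.

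The only nontrivial conceptual step is the injectivity of $\Phi$, and it is short; I do not expect any serious obstacle. The prime-power hypothesis enters precisely in the LCM step: for general $N$, a cyclic subgroup of order $N$ in $\mathfrak{S}_d$ need not contain an $N$-cycle (for instance an element of order $6$ is realised by a disjoint product of a $2$-cycle and a $3$-cycle in $\mathfrak{S}_5$), which is consistent with the fact that the conclusion fails for composite $N$ with more than one prime factor.
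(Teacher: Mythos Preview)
Your proof is correct and follows essentially the same approach as the paper's: first establish injectivity of $\Phi$, then use the cycle decomposition of a generator's image to force $d\ge p^k$. The only cosmetic difference is that you deduce injectivity from the free action of $\Rot_N$ on the vertices (via a single preimage $y_0$), whereas the paper argues that $\Phi(g)=\mathrm{id}$ would force $g$ to fix every point of the full-dimensional polygon; both are one-line arguments, and your added remark on the trivial upper bound $d\le N$ is a nice touch.
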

\begin{proof}
Let $P$ be the regular $N$-gon and assume that $P$ has a LP lift of size $d$ that is $\Rot_N$-equivariant. By Definition \ref{def:equivariantLPlift} there exists a homomorphism $\Phi:\Rot_N \rightarrow \mathfrak{S}_d$ such that \eqref{eq:LP-Linvariance} and \eqref{eq:LP-linequivariance} are satisfied. It is not hard to show that $\Phi$ must be injective: indeed if $\Phi(g)=1$ for some $g \in \Rot_N$ then by the equivariance relation \eqref{eq:LP-linequivariance} we must have $\pi(y) = g\pi(y)$ for all $y \in \RR^d_+ \cap L$, which means that $x = g x$ for all $x \in P$. Since $P$ is full-dimensional this means that $g$ is the identity element in $\Rot_N$.

Since $\Phi$ is injective, we have that $\Phi(\Rot_N)$ is a cyclic subgroup of $\mathfrak{S}_d$ of size $N$ and thus $\mathfrak{S}_d$ has an element of order $N$. One can show that if $\mathfrak{S}_d$ has an element of order $p^t$ where $p$ is a prime and $t \geq 1$, then $d \geq p^t$: to see this one can use the decomposition of a permutation into cycles with disjoint support, and recall that the order of a permutation is the least common multiple of the cycle lengths; thus if the order of a permutation is $p^t$ then at least one of the cycle lengths must be divisible by $p^t$ which implies that $d \geq p^t$. Thus this shows that when $N$ has the form $N = p^t$ then we must have $d \geq p^t$.
\begin{rem}
When $N$ is a prime we easily see that we must have $d \geq N$ by the simple fact that $N$ is a prime and that it has to divide $d!$.
\end{rem}
\end{proof}

\bibliography{../../../bib/nonnegative_rank}
\bibliographystyle{alpha}

\end{document}